\theoremstyle{plain}
\newtheorem{theorem}{Theorem}[section]
\newtheorem{proposition}[theorem]{Proposition}
\newtheorem{lemma}[theorem]{Lemma}
\newtheorem{corollary}[theorem]{Corollary}
\theoremstyle{definition}
\newtheorem{definition}[theorem]{Definition}
\newtheorem{example}[theorem]{Example}
\theoremstyle{remark}
\newtheorem{remark}[theorem]{Remark}
\DeclareMathOperator{\Span}{span}
\DeclareMathOperator{\PSL}{PSL}
\def \RR {\mathbb R}
\def \CC {\mathbb C}
\def \NN {\mathbb N}
\def \SS {\mathbb S}
\def \HH {\mathbb H}
\def \HP {\mathbb{HP}}
\def \HS {\mathbb{HS}}
\def \dS {\mathrm{d}\mathbb S}
\def \AdS {\mathrm{Ad}\mathbb S}
\def \RRP {\mathbb R \mathrm P}
\def \SSP {\mathbf S}
\def \HHP {\mathbf H}
\def \HPP {\mathbf{HP}}
\def \HSP {\mathbf{HS}}
\def \dSP {\mathrm{d}\mathbf S}
\def \AdSP {\mathrm{Ad}\mathbf S}
\def \cA {\mathscr A}
\def \cP {\mathscr P}
\def \cM {\mathscr M}
\def \cG {\mathscr G}
\def \bx {\mathbf x}
\def \by {\mathbf y}
\begin{document}

\title{Weakly Inscribed Polyhedra}

\author{Hao Chen}
\thanks{Hao Chen was partly supported by the National Science Foundation under Grant
No.\ DMS-1440140 while the author was in residence at the Mathematical Sciences
Research Institute in Berkeley, California, during the Fall 2017 semester.}
\address{Georg-August-Universit\"at G\"ottingen, Institut f\"ur Numerische und Angewandte Mathematik, Lotzestra\ss e 16--18, D-37083 G\"ottingen, Germany}
\email{hao.chen.math@gmail.com}

\author{Jean-Marc Schlenker}
\thanks{Jean-Marc Schlenker's research was partially supported by University of Luxembourg research projects GeoLoDim (9/2014-8/2017) and NeoGeo (9/2017-8/2019), and by the Luxembourg National Research Fund projects INTER/ANR/15/11211745, INTER/ANR/16/11554412/SoS and OPEN/16/11405402.}
\address{Department of Mathematics, University of Luxembourg, 
Maison du nombre, 6 avenue de la Fonte, 
L-4364 Esch-sur-Alzette, Luxembourg.}
\email{jean-marc.schlenker@uni.lu}

\date{\today}

\begin{abstract}
  Motivated by an old question of Steiner, we study convex polyhedra in $\RRP^3$
  with all their vertices on a sphere -- but not necessariy on one side of it -- and
  give an explicit combinatorial description of their possible combinatorics.

  The proof uses a natural extension of the
  3-dimensional hyperbolic space by the de Sitter space. Polyhedra with their
  vertices on the sphere are interpreted as ideal polyhedra in this extended
  space. We characterize the possible dihedral angles of those ideal polyhedra,
  as well as the geometric structures induced on their boundaries, which is
  composed of hyperbolic and de Sitter regions glued along their ideal boundaries.
  
  % The key idea of this paper is to regard $\RRP^3$ as a combination of the
  % hyperbolic space and the de Sitter space, with the sphere as the common ideal
  % boundary.  Then weakly inscribed polyhedra can be interpreted as ideal
  % polyhedra, and our combinatorial characterization follows from a linear
  % programming characterization of their exterior dihedral angles.  We also
  % describe the hyperbolic-de Sitter structures induced on the boundaries of
  % weakly ideal polyhedra.  They are obtained by properly gluing hyperbolic
  % planes to a de Sitter surface along their ideal boundaries.
  
  % Such polyhedra can be interpreted as ideal polyhedra, if we regard
  % $\RR\P^3$ as a combination of the hyperbolic space and the de Sitter space,
  % with the sphere as the common ideal boundary.  We have three main results:
  % (1) the $1$-skeleta of weakly inscribed polyhedra are characterized in a
  % purely combinatorial way, (2) the exterior dihedral angles are
  % characterized by linear programming, and (3) we also describe the
  % hyperbolic-de Sitter structure induced on the boundary of weakly inscribed
  % polyhedra.
\end{abstract}

\maketitle

\tableofcontents

\section{Introduction}
In 1832, Steiner~\cite[Problem 77]{steiner1832} asked the following
questions\footnotemark: Does every polyhedron have a combinatorially equivalent
realization that is inscribed to a sphere, or to another quadratic surface?  If
not, which polyhedra have such realizations?  Here, given a surface $S$, a
polyhedron $P$ is \emph{inscribed to} $S$ if all the vertices of $P$ lie on
$S$.  We say that a polytope is \emph{inscribable to $S$} if it has a
combinatorially equivalent realization with all its vertices on $S$.

\footnotetext{The original text in German is
	\begin{quote}
	77) Wenn irgend ein convexes Polyëder gegeben ist, lässt sich dann immer (oder
	in welchen Fällen nur) irgend ein anderes, welches mit ihm in Hinsicht der Art
	und der Zusammensetzung der Grenzflächen übereinstimmt (oder von gleicher
	Gattung ist), in oder um eine Kugelfläche, oder in oder um irgend eine andere
	Fläche zweiten Grades beschreiben (d.\ h.\ dass seine Ecken alle in dieser
	Fläche liegen oder seine Grenzflächen alle diese Fläche berühren)?
\end{quote}
}

We use the preposition ``to'' rather than ``in'' to make it clear that we do
not require that the polyhedron is on one side of $S$.  This is not required in
Steiner's definition, either.  In fact, since Steiner's problem is obviously
projectively invariant, it is quite natural to consider it in projective space.
It is then possible that a polyhedron with vertices on the sphere does not lie
inside the sphere.

\begin{definition}\label{def:strongweak}
  In the projective space $\RRP^3$, a polyhedron $P$ inscribed to a quadric
  $S$ is \emph{strongly inscribed to $S$} if the interior of $P$ is
  disjoint from $S$, or \emph{weakly inscribed to $S$} otherwise.
\end{definition}

Steiner also defined that a polyhedron is \emph{circumscribed} to a surface if
all its facets are tangent to the surface.  We will see that circumscription
and inscription are closely related through polarity, hence we only need to
focus on one of them.

\medskip

Steiner's problem remained entirely open for nearly a century.  There were even
beliefs~\cite{bruckner1900} that every simplicial polyhedron is strongly
inscribable in a sphere.  The first polyhedra without any strongly inscribed
realization were discovered in 1928 by Steinitz~\cite{steinitz1928}.  It was
realized much later that the cube with one vertex truncated cannot be inscribed
to any quadric. This follows from the well-known fact that if seven vertices of
a cube lie on a quadric, so does the eighth
one~\cite[Section~3.2]{bobenko2008}; see~\cite[Example~4.1]{chen2017} for a
complete argument.

There are three quadrics in $\RRP^3$ up to projective transformation: the
sphere, the one-sheeted hyperboloid, and the cylinder.  Strong inscriptions to
them are essentially characterized in previous works of
Hodgson--Rivin--Smith~\cite{hodgson1992} and
Danciger--Maloni--Schlenker~\cite{danciger2014}.  The current paper answers
Steiner's question for polyhedra weakly inscribed to a sphere.

\medskip

The projective space $\RRP^3$ can be seen as a completion of the Euclidean
space $\RR^3$ with a hyperplane at infinity.  If the hyperplane at infinity is
disjoint from both the sphere and the inscribed polyhedron, then the
inscription must be strong (see~\cite{chen2017}).  Hence we focus on the
situation where the hyperplane at infinity intersects the sphere, but does not
intersect the polyhedron.  The sphere then appears in the Euclidean space as a
two-sheeted hyperboloid, and the weakly inscribed polyhedron has some vertices
on one sheet, and other vertices on the other sheet.

Our main result is the following combinatorial characterization of polyhedra
weakly inscribed to a sphere.

\begin{theorem}[Combinatorial characterization] \label{thm:combinatorics}
  A $3$-connected planar graph $\Gamma=(V,E)$ is the $1$-skeleton of a
  polyhedron $P \subset \RRP^3$ weakly inscribed to a sphere if and only if
  \renewcommand{\theenumi}{\rm\bf(C\arabic{enumi})}
  \renewcommand{\labelenumi}{\theenumi}
  \begin{enumerate}
  	\item \label{con:cycles} $\Gamma$ admits a vertex-disjoint cycle cover
  		consisting of two cycles
  \end{enumerate}
  and, if we color the edges connecting vertices on the same cycle by red (r), and
  those connecting vertices from different cycles by blue (b), then
  \begin{enumerate}[resume]
		\item \label{con:alternate} there is a cycle visiting all the edges
			(repetition allowed) along which the edge color has the pattern
			\begin{itemize}
				\item \dots bbrbbr\dots~if the cycle cover contains a $1$-cycle, or

				\item \dots brbr\dots~otherwise.
			\end{itemize}
	\end{enumerate}
\end{theorem}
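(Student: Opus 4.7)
The plan is to exploit the paper's central viewpoint $\RRP^3 = \overline{\HH^3} \cup \overline{\dS^3}$ with the sphere $S$ as the common ideal boundary: a polyhedron weakly inscribed to $S$ is precisely an ``ideal hyperbolic--de Sitter polyhedron'', every vertex being a common ideal point of the two spaces. Both directions of the equivalence should hinge on a linear programming characterization of the exterior dihedral angles of such objects, analogous to those of Hodgson--Rivin--Smith~\cite{hodgson1992} and Danciger--Maloni--Schlenker~\cite{danciger2014} in their respective settings.

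For necessity, given a weakly inscribed $P$, I would first observe that the sphere $S$ partitions the interior of $P$ into a hyperbolic part $P \cap \HH^3$ and a de Sitter part $P \cap \dS^3$. Each face of $P$ is a planar polygon whose vertices lie on the conic $S$, so its supporting plane is either secant to $S$ or meets only the exterior of $S$; in either case the face is of a well-defined ``hyperbolic'' or ``de Sitter'' type. The natural red/blue edge coloring is then the one in which an edge is red exactly when its two incident faces are of the same type. The two cycles of the cover in (C1) should materialize as the boundary curves of the hyperbolic and de Sitter regions on $\partial P$, with the degenerate $1$- and $2$-cycle cases corresponding to a region reduced to a ``cap'' over a single vertex or edge. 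Reading off the colors along the facial walks around each region should then produce exactly the alternating patterns in (C2).

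For sufficiency, given a $3$-connected planar graph $\Gamma$ satisfying (C1) and (C2), the plan is to use the combinatorial data to produce an assignment of exterior dihedral angles to the edges (real angles on red edges, de Sitter-type angles on blue edges). Feasibility of such an angle assignment is a linear programming problem: sum conditions around each face express that the face is a genuine hyperbolic or de Sitter polygon, sum conditions around each vertex express that the link of the vertex closes up, and sign constraints encode convexity. Conditions (C1) and (C2) should be shown to be precisely what is needed for this LP to admit a feasible solution, via Farkas' lemma. Once a valid angle assignment is produced, the polyhedron itself is reconstructed by the corresponding realization theorem for ideal hyperbolic--de Sitter polyhedra.

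The main obstacle is the dictionary between the abstract LP and the explicit combinatorial patterns (C1) and (C2). In the strongly inscribed case the obstruction is a ``bad cut'' in the graph, yielding a clean certificate by Farkas' lemma; here, because exterior dihedral angles come in two flavours (hyperbolic and de Sitter) with mixed signs, infeasibility certificates are more elaborate, and reducing each of them to the failure of either (C1) or (C2) is where most of the work lies. Equally delicate will be the treatment of the degenerate cases in which the cover contains a $1$-cycle or a $2$-cycle, since these correspond geometrically to weakly inscribed polyhedra with an isolated apex vertex or apex edge, and both the angle LP and the realization step degenerate accordingly.
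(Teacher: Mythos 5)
Your high-level strategy matches the paper's: the theorem is indeed deduced from the linear programming characterization of exterior dihedral angles (the paper's Theorem~1.2, itself obtained from the homeomorphism $\Theta:\cP_{p,q}\to\cA_{p,q}$ proved in Sections~4--6). But given that reduction, the \emph{entire} remaining content of the combinatorial theorem is the equivalence between feasibility of the weight problem (signs (W1) and vertex-sum conditions (W2)) and the alternating-cycle condition (C2), in both directions --- and this is exactly what your proposal does not carry out. You defer it to ``Farkas' lemma'' and acknowledge that translating infeasibility certificates into failures of (C1)/(C2) ``is where most of the work lies''; that admission is the gap. The paper closes it with an explicit, elementary argument and no duality at all: for (C2)$\Rightarrow$(W1)$\wedge$(W2) one assigns to each edge $\pm n_e$, where $n_e$ is the number of visits of the alternating cycle (the alternation forces the red and blue visit counts to balance at every vertex, and a normalization by $3\pi/n$ produces the $-2\pi$ sum at a $1$-cycle apex); for the converse one first reduces (C2) to ``every edge lies on some alternating cycle'' (by concatenating cycles, Proposition~7.2, and by the simpler equivalence with ``the apex is joined to every vertex'' when $p<2<q$), then, given a feasible weight $w$, repeatedly subtracts multiples of alternating cycles while preserving (W2), and shows that an edge on no alternating cycle would survive in the residual graph, where growing an alternating path from both of its endpoints until self-intersection manufactures an alternating cycle through it --- a contradiction. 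None of this certificate-free machinery appears in your sketch, and without it (or a worked-out Farkas analysis, including the exceptional $-2\pi$ constraint at a $1$-cycle vertex, which your LP description omits) the sufficiency direction is only a plan.

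Two further points. First, your necessity argument is also incomplete at the same spot: ``reading off the colors along the facial walks around each region'' does not obviously yield a single closed walk through \emph{all} edges with pattern $brbr\ldots$ (or $bbr\,bbr\ldots$); in the paper, necessity of (C2) is not read off the geometry directly but goes through the angle weights (Theorem~1.2) and the same combinatorial equivalence as above. Second, your edge coloring (``red exactly when the two incident faces are of the same type'') is not the right dichotomy: the paper shows there are no exterior facets, and faces may be weakly ideal polygons meeting both $\HHP^3$ and $\dSP^3$, so ``face type'' is not well defined; the correct coloring is by the edge itself (red if the edge lies in $\overline{\HHP^3}$, i.e.\ joins vertices on the same sheet/cycle, blue if it lies in $\dSP^3$), with (C1) coming from the structure of the interior part $\mathcal{I}(P)$ (Proposition~4.4), not from boundary curves of a hyperbolic/de Sitter face decomposition.
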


Here, we abuse the terminology, and call a single vertex $1$-cycle,
and a single edge $2$-cycle.  We will see that the two cycles
in~\ref{con:cycles} correspond to vertices on the two sheets of the
hyperboloid, and the edges are colored blue if they are between the sheets, or
red otherwise.  If the cycle cover contains a $1$-cycle with a single vertex
$v$, Condition~\ref{con:alternate} has a much simpler formulation, namely that
$v$ is connected to every other vertex.  We decide to adopt the current
formulation for comparison with the other case.

Theorem~\ref{thm:combinatorics} is remarkable because, unlike the
characterization of strongly inscribed polyhedra~\cite{hodgson1992}, it does
not involve any feasibility problem.  Despite some
efforts~\cite{dillencourt1996, cheung2003}, no characterization as explicit as
Theorem~\ref{thm:combinatorics} has been obtained for strong inscription.

In fact, Theorem~\ref{thm:combinatorics} is a consequence of the following
linear programming characterization.

\begin{theorem}[Linear programming characterization] \label{thm:weights}
  A $3$-connected planar graph $\Gamma=(V,E)$ is the $1$-skeleton of a
  polyhedron $P \subset \RRP^3$ weakly inscribed to a sphere if and only if
  \renewcommand{\theenumi}{\rm\bf(C\arabic{enumi})}
  \renewcommand{\labelenumi}{\theenumi}
  \begin{enumerate}
  	\item $\Gamma$ admits a vertex-disjoint cycle cover consisting of two
  		cycles
  \end{enumerate}

  and, if we color the edges connecting vertices on the same cycle by red, and
  those connecting vertices from different cycles by blue, there is a weight
  function $w:E\to\RR$ such that

	\renewcommand{\theenumi}{\rm\bf(W\arabic{enumi})}
	\renewcommand{\labelenumi}{\theenumi}
	\begin{enumerate}
  	\item \label{con:sign} $w>0$ on red edges, and $w<0$ on blue edges;

  	\item \label{con:wsum} $w$ sums up to $0$ over the edges adjacent to a
  		vertex $v$, except when $v$ is the only vertex in a $1$-cycle, in which
  		case $w$ sums up to $-2\pi$ over the edges adjacent to $v$. 
  \end{enumerate}
\end{theorem}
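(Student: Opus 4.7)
The plan is to adapt the hyperbolic angle-rigidity strategy of Hodgson--Rivin--Smith \cite{hodgson1992} to the mixed hyperbolic--de Sitter model of $\RRP^3$ introduced in this paper. Recall that $\RRP^3 = \HH^3 \cup \SS^2 \cup \dS^3$, with the sphere as the common ideal boundary. A weakly inscribed polyhedron $P$ is split by $\SS^2$ into a convex hyperbolic piece $P_\HH = P \cap \HH^3$ and a convex de Sitter piece $P_\dS = P \cap \dS^3$. The weights in (W1), (W2) are intended to encode the exterior dihedral angles of $P$ at the edges, positive when measured in the Riemannian geometry of $\HH^3$ and negative when measured in the Lorentzian geometry of $\dS^3$.

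For the necessity direction I would first analyse $\partial P \cap \SS^2$. Since all vertices lie on $\SS^2$ but generically sit on one side of the separating locus, $\partial P \cap \SS^2$ is a Jordan curve on $\partial P$ that crosses precisely the blue edges and partitions $V = V_1 \sqcup V_2$. The red edges in each $V_i$ inherit a Hamilton cycle from the boundary of the corresponding hyperbolic (resp.\ de Sitter) cap of $\partial P$, giving (C1); the degenerate $1$-cycle case arises exactly when one of $P_\HH$, $P_\dS$ is a cone with apex a single vertex $v$. Condition (W1) is then immediate from the sign convention. For (W2), I would analyse the link of each vertex $v$ as a polygon on a horosphere (or Lorentzian horocycle) cross-section: the positive hyperbolic turning and the negative de Sitter turning cancel in the generic case to give sum $0$, while at an isolated vertex $v$ only the hyperbolic contribution is present and wraps fully around, producing the $-2\pi$ term.

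For sufficiency I would use the standard Alexandrov-style framework. Let $\cP(\Gamma)$ denote the moduli space of weakly inscribed polyhedra with $1$-skeleton $\Gamma$, modulo the projective action of $\PSL(2,\CC)$ on $\SS^2$, and let $\Phi: \cP(\Gamma) \to \RR^E$ be the weight map. Let $\cA(\Gamma) \subset \RR^E$ be the affine polytope cut out by (W1) and (W2); necessity gives $\Phi(\cP(\Gamma)) \subset \cA(\Gamma)$, and one must prove equality. The classical scheme then consists of three ingredients: infinitesimal rigidity, i.e.\ injectivity of $d\Phi$, obtained by a Cauchy-type argument adapted to mixed hyperbolic--de Sitter ideal polyhedra; local openness via the implicit function theorem after matching the dimensions $\dim \cP(\Gamma) = \dim \cA(\Gamma)$; and properness of $\Phi$ into the relative interior of $\cA(\Gamma)$. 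Combined with the connectedness of $\cA(\Gamma)$, this upgrades openness to surjectivity onto $\cA(\Gamma)$.

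The hard part will be properness. Given a sequence $P_n \in \cP(\Gamma)$ whose weights stay in a compact subset of the interior of $\cA(\Gamma)$, I must rule out every degeneration: two vertices colliding on $\SS^2$, the separating curve $\partial P_n \cap \SS^2$ sliding onto a vertex or off a face, $P_\HH$ or $P_\dS$ collapsing to a lower-dimensional piece, or the normalisation diverging under $\PSL(2,\CC)$. In each scenario I would identify a specific weight forced to hit a boundary value in $\{0, \pm\pi, \pm\infty\}$, contradicting the assumption. Finally, Theorem~\ref{thm:combinatorics} is deduced from Theorem~\ref{thm:weights} by a Hall/max-flow feasibility argument on the coloured graph $\Gamma$: the linear system (W1)+(W2) has a solution exactly when the alternating colour pattern (C2) is admissible, since the pattern encodes a balanced circulation of weight around the red cycles through the blue edges.
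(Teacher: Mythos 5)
Your overall scheme---necessity via vertex links, sufficiency via infinitesimal rigidity, properness and a deformation argument---is the same strategy the paper follows, but there is a genuine gap in how you set up the target of the deformation. You define $\cA(\Gamma)\subset\RR^E$ by (W1) and (W2) alone and propose to prove $\Phi(\cP(\Gamma))=\cA(\Gamma)$. That equality is false: the dihedral angles of an actual weakly inscribed polyhedron satisfy further conditions beyond (W1)--(W2), namely $|\theta|<\pi$ on every edge and a constraint on the sum over the blue edges relative to $-2\pi$ (with equality exactly in the $1$-cycle case); see Conditions~\ref{con:range}, \ref{con:negsum} and Theorem~\ref{thm:angle}. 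In the case $2\le p\le q$ the set cut out by (W1)--(W2) is a cone, invariant under positive rescaling, so it contains weight functions with entries larger than $\pi$ that cannot be dihedral angles; hence $\Phi$ cannot be proper as a map into your $\cA(\Gamma)$ (a sequence of polyhedra whose weights converge to an unrealizable interior point of the cone must degenerate), and the ``open and closed in a connected target'' argument collapses. You implicitly acknowledge this when you list $\pm\pi$ among the boundary values to be excluded in the properness step, but $\pm\pi$ is not a boundary value of the set you defined. The paper resolves exactly this by (i) proving the realization statement for the smaller space of admissible graphs (Theorem~\ref{thm:angle}), and (ii) a separate normalization lemma: if $w$ satisfies (W1)--(W2) and its blue weights sum to $-2\omega$, then every blue weight is at least $-\omega$, so the rescaled $tw$ with $0<t<\pi/\omega$ satisfies the extra conditions. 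This redundancy argument is what makes (W1)--(W2) alone a correct feasibility criterion, and it is missing from your plan; without it the ``if'' direction of Theorem~\ref{thm:weights} does not follow from your scheme.

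Two further points. Even with the corrected target, the covering-space step needs care: for $2\le p\le q$ the spaces $\cP_{p,q}$ and $\cA_{p,q}$ are not simply connected, so local homeomorphism plus properness only gives a covering of unknown degree; the paper handles this with an open cover by the cells $\cA^i_{p,q}$ of graphs containing an edge $1^+i^-$. If you insist on fixing $\Gamma$ your (corrected) target is convex, which would sidestep this for surjectivity, but then properness relative to fixed combinatorics must be argued separately (limits can gain or lose edges). Finally, some descriptive slips: $P\cap\HHP^3$ has two components (this is the very point of weak inscription), so $\partial P\cap\partial\HHP^3$ consists of two curves rather than one Jordan curve; and at the apex of a $1$-cycle all incident edges are blue (exterior), so the $-2\pi$ in (W2) comes from the de Sitter, clockwise turning wrapping once around the vertex figure, not from a hyperbolic contribution.
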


Recall that we consider a single vertex as a $1$-cycle, and a single edge as a
$2$-cycle.  The feasibility problem involved here is visibly much simpler than
that in~\cite{hodgson1992}.  In particular, there is no inequality on the
non-trivial cuts.

\medskip

In~\cite{hodgson1992}, the sphere was seen as the ideal boundary of the
projective model of the hyperbolic space, so that strongly inscribed polyhedra
were interpreted as hyperbolic ideal polyhedra.  The characterization is then
formulated in terms of hyperbolic exterior dihedral angles.  More specifically,
a polyhedron is strongly inscribable in a sphere if and only if one can assign
weights (angles) to the edges subject to a family of equalities and
inequalities.  Hence strong inscribability in the sphere can be determined by
solving a feasibility problem.

In~\cite{danciger2014}, polyhedra strongly inscribed in a one-sheeted
hyperboloid (resp.\ a cylinder) were seen as ideal polyhedra in the anti-de
Sitter space (resp.\ the half-pipe space~\cite{danciger:ideal,
danciger:transition}).  Dihedral angles of the ideal polyhedra then lead to a
linear programming characterization in the same style of~\cite{hodgson1992}.

\begin{remark}
  The definition in~\cite{danciger2014} is slightly stronger than ours.  They
  require that $P \cap S$ consists of exactly the vertices of $P$.  The two
  definitions are equivalent only when $S$ is the sphere.  Otherwise, it is
  possible that some edges of $P$ are contained in $S$.
\end{remark}

Our proof to Theorem~\ref{thm:weights} follows a similar approach.  Given a
sphere $S\subset\RRP^3$, its interior (resp.\ exterior) is seen as the
projective model for the $3$-dimensional hyperbolic space $\HHP^3$ (resp.\ de
Sitter space $\dSP^3$).  In~\cite{schlenker1998, schlenker2001}, $\HHP^3$ and
$\dSP^3$ together make up the \emph{hyperbolic-de Sitter space} (HS space for
short) which is denoted by $\HSP^3$.  Then a polyhedron $P$ ideal to $S$ can be
considered as an ideal polyhedron in $\HSP^3$.  We say that $P$ is strongly
ideal if $P$ is contained in $\HHP^3$, or weakly ideal otherwise.

We will see in Section \ref{sec:combinatorics} that Theorem \ref{thm:weights}
follows from Theorem \ref{thm:angle} below, which describes the possible
dihedral angles of convex polyhedra in $\HSP^3$.  More specifically, the
dihedral angles at the edges of $P$ form a weight function $\theta$ satisfying
all the conditions of Theorem~\ref{thm:weights} and, additionally, that
$|\theta|<\pi$ and the sum of $\theta$ over the blue edges is bigger than
$-2\pi$.  These additional conditions are, however, redundant in the linear
programming characterization, as we will prove in
Section~\ref{sec:combinatorics}.

\medskip

Rivin~\cite{rivin1994} also gave another characterization in terms of the
metric induced on the boundaries of ideal hyperbolic polyhedra.  More
specifically, every complete hyperbolic metric of finite area on an $n$-times
punctured sphere can be isometrically embedded as the boundary of a
$n$-vertices polyhedron strongly inscribed to a sphere, viewed as an ideal
hyperbolic polyhedron (possibly degenerate and contained in a plane).
Similarly, \cite{danciger2014} also characterized polyhedra strongly inscribed
in the one-sheeted hyperboloid in termes the possible induced metrics on the
boundary of ideal polyhedra in the Anti-de Sitter space.

Extending these previous works, we also provide a characterization for the
geometric structure induced on the boundary of a weakly ideal polyhedron in
$\HSP^3$.  This geometric structure, as distinguished from that induced on a
strongly ideal polyhedron, contains a de Sitter part; that is, a part locally
modeled on the de Sitter plane.  We call this induced data an ``HS-structure'',
since it is locally modeled on $\HS^2$, a natural extension of the hyperbolic
plane by the de Sitter plane. Relevant definitions in the following statement
will be recalled in the next section.

\begin{theorem}[Metric characterization] \label{thm:induced}
  Let $P$ be a weakly ideal polyhedron in $\HSP^3$ with $n$ vertices. Then the
  induced HS-structure on $\partial P$ is a complete, maximal HS structure on
  the punctured sphere, obtained by gluing copies of $\HHP^2$ to a de Sitter
  surface along their ideal boundaries by $C^1$ piecewise projective maps such
  that, at the ``break points'' where the maps fail to be projective, the
  second derivative has a positive jump.  Conversely, each HS structure of this
  type is induced on a unique weakly ideal polyhedron in $\HSP^3$. 
\end{theorem}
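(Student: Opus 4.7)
The plan is to treat the two directions separately, following a strategy analogous to Rivin's treatment of strongly ideal polyhedra but adapted to the presence of a de Sitter component.

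For the direct direction, I would decompose $\partial P$ according to how each face meets the sphere $S$. Each face lies in a plane whose intersection with $S$ is a circle separating the plane into a hyperbolic disk (a copy of $\HHP^2$) and its de Sitter complement. Faces whose polygon lies in the hyperbolic disk contribute isometric copies of $\HHP^2$, while the de Sitter portions of the remaining faces assemble, via the dihedral-edge identifications, into a single de Sitter surface. Adjacent face planes meet along a line, so the identification along a shared edge is the restriction of a projective transformation of $\RRP^3$, which shows that the gluing is piecewise projective. The $C^1$ compatibility at a non-vertex point on a horizon arc follows because two ideal arcs of adjacent hyperbolic planes glued across a common edge form a smooth curve on $\partial\HHP^3=S$. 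The only places where the gluing fails to be projective are the ideal vertices of $P$, and convexity of $P$ translates there into the positive jump of the second derivative (the HS-analogue of the convex angle defect at a cone point). Completeness and maximality then follow from the compactness of $P$ in $\RRP^3$, since every end of $\partial P \setminus V$ is confined to one of the ideal vertices.

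For the converse, I would introduce a moduli space $\cM$ of HS-structures of the stated form on the $n$-punctured sphere, a moduli space $\cP$ of weakly ideal polyhedra in $\HSP^3$ with $n$ labeled vertices modulo projective isometry, and the induced-structure map $\Phi\colon\cP\to\cM$. The goal is to show $\Phi$ is a homeomorphism. The argument proceeds in three standard steps: (i) $\Phi$ is continuous, which is formal from the explicit description given in the direct direction; (ii) $\Phi$ is proper, which requires ruling out degenerations — vertex collisions, collapsing faces, escape of a face plane to a tangent position — by using the positive second-derivative jump at each break point as a uniform lower bound on the convex ``corner'' at each puncture; (iii) $\Phi$ is a local homeomorphism, which reduces to infinitesimal rigidity of weakly ideal polyhedra in $\HSP^3$. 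Granting (i)--(iii), together with a dimension count matching the degrees of freedom in Theorem~\ref{thm:weights}, the map $\Phi$ is a covering onto its image, and connectedness of $\cM$ — most likely obtained by paths through the cell structure of combinatorial types recorded in Theorem~\ref{thm:combinatorics} — yields bijectivity.

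The main obstacle will be the infinitesimal rigidity step in (iii). Rivin's argument for the strongly ideal case relies on the Schläfli formula and concavity of volume; here the analogue of Schläfli receives contributions from the de Sitter edges (where the relevant lengths and angles have opposite signs from the hyperbolic case), and one must verify that the resulting quadratic form on first-order deformations is definite on the tangent space of $\cP$. A related subtlety is that first-order deformations can in principle push a hyperbolic edge across $S$ into a de Sitter edge, or reverse the ``color'' of an edge in the sense of Theorem~\ref{thm:weights}; treating this transition transparently will require formulating rigidity within the unified HS framework of~\cite{schlenker1998, schlenker2001}, so that the hyperbolic and de Sitter regions are handled on the same footing from the outset.
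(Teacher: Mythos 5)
Your skeleton for the converse (continuity, properness, local homeomorphism via rigidity, dimension count, covering) matches the paper's, but the final step as you state it is not valid and this is precisely where the present setting differs from Rivin's. You conclude bijectivity from ``covering map plus connectedness of $\cM$''. For $2 \le p \le q$ the spaces $\cP_{p,q}$ and $\cM_{p,q}$ are \emph{not} simply connected (rotating the break points along $\partial\HHP^2$, equivalently rotating $V^-$ about the axis joining the two hyperbolic components, generates a nontrivial loop), so a covering of a connected target may well have degree greater than one; no path argument through combinatorial cell types can repair this. The paper deals with exactly this issue in Section~\ref{sec:topology}: for the metric map $\Delta$ it lifts to the universal covers $\tilde\cP_{p,q}\to\tilde\cM_{p,q}$ of \emph{marked} polyhedra and \emph{marked} HS structures (carrying a homotopy class of path from $1^+$ to $1^-$), and for the angle map it uses the open cover by the simply connected cells $\cA^i_{p,q}$. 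Without some such device your argument only yields that $\Delta$ is a finite covering, not a homeomorphism, hence not the uniqueness claim of the theorem.

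In the direct direction there is a second gap: the substantive regularity statement is $C^1$-ness of the gluing map \emph{at the break points}, whereas you only discuss non-vertex points, where the map is projective and smoothness is automatic. The paper proves $C^1$ at a vertex $v$ via the shift $D_v\gamma=(\gamma_v^{\rightarrow})^{-1}\circ\gamma_v^{\leftarrow}$ and the criterion that $\gamma$ is $C^1$ at $v$ iff $D_v\gamma$ preserves horocycles based at $v$ (Lemma~\ref{lem:C1}), verified by intersecting a horosphere at $v$ with the two faces of $P$ that meet $\partial\HHP^3$ near $v$ (Proposition~\ref{prop:C1}); likewise the positive jump is not a formal consequence of convexity but comes from the observation that truncating the dihedral wedge at $v$ shortens horocyclic segments (Proposition~\ref{prop:C2}). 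Finally, the rigidity step you flag as the main obstacle is handled in the paper not by a Schl\"afli/volume-concavity argument but by the infinitesimal Pogorelov map, which transports first-order isometric deformations to Euclidean ones and invokes Alexandrov's infinitesimal rigidity (Proposition~\ref{prop:rigidmetric}); this sidesteps the definiteness question you raise, but since you explicitly left that step open, the two binding gaps are the covering-number issue and the $C^1$/positivity argument at the break points.
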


Note that both the hyperbolic and de Sitter parts of the metric have a
well-defined real projective structure at infinity, so it is meaningful to ask
for a piecewise projective gluing map.  More explanations on the statement of
Theorem \ref{thm:induced} can be found in Section \ref{ssc:HS}.

\begin{remark}
	For the interest of physics audience, weakly ideal polyhedra can be
	interpreted as a description of interactions of ``photons'' in a 3-dimensional spacetime;
	see~\cite{barbot2011}.  More specifically, $\HS^2$ models the link of an
	event in a 3-dimensional space-time.  The vertices on different boundary components of
	the de Sitter surface in Theorem~\ref{thm:induced}, or, combinatorially, on
	different cycles in Condition~\ref{con:cycles}, correspond to incoming and
	outgoing photons (depending on the direction of time) involved in an
	interaction.  A special case is the single vertex in a degenerate boundary
	component, or, combinatorially, in a $1$-cycle, which corresponds to an
	extreme BTZ-like singularity.\footnote{Changed this paragraph for clarity}
\end{remark}

\begin{remark}
  The weak inscription, although covered by Steiner's definition, seemed
  forgotten and only revived recently.  Schulte~\cite{schulte1987} considered
  higher dimensional generalizations of Steiner's problem and defined a weaker
  notion following an idea from~\cite{grunbaum1987}.  However, since he worked
  in Euclidean space, his definition coincides with the strong inscription.
  Padrol and the first author~\cite{chen2017} extended Schulte's definitions
  into the projective space, and noticed polyhedra inscribed to the sphere but
  not strongly inscribable.
\end{remark}

\medskip

The paper is organized as follows.  The essential definitions are made in
Section~\ref{sec:define} in a general setting.  Then we can view polyhedra
weakly inscribed to the sphere as ideal polyhedra in $\HSP^3$.  In
Section~\ref{sec:overview}, we announce characterizations for the dihedral
angles and induced metrics of weakly ideal polyhedra in $\HSP^3$, which are
actually reformulations of Theorems~\ref{thm:weights} and~\ref{thm:induced}.
We also outline the proof strategy, which is carried out in the following
sections.  In particular, there are some technical challenges which were not
encountered in the previous works on strong inscription.  For instance, the
space of weakly inscribed polyhedra is not simply connected.  Finally, in
Section~\ref{sec:combinatorics}, we deduce Theorem~\ref{thm:combinatorics} from
the linear programming characterization of dihedral angles.

\section{Definitions} \label{sec:define}
We are mainly interested in three dimensional polyhedra.  However, the
definitions in this section are more general than strictly necessary, and cover
the anti-de Sitter and half-pipe spaces that we hope to study in a further
work. Lower dimensional cases are used as examples.

\subsection{The hyperbolic, anti-de Sitter and half-pipe spaces}

The \emph{projective space} $\RRP^d$ is the set of linear $1$-subspaces of
$\RR^{d+1}$.  An \emph{affine chart} of $\RRP^d$ is an affine hyperplane $H
\subset \RR^{d+1}$ which is identified to the set of linear $1$-dimensional
subspaces intersecting $H$.  The linear hyperplane parallel to $H$ is
projectivized as the \emph{hyperplane at infinity}; linear $1$-dimensional
subspaces contained in this hyperplane have no representation in the affine
chart $H$.

Let $\RR^{d+1}_{p,q}$ denotes $\RR^{d+1}$ equipped with an inner product
$\left< \cdot, \cdot \right>$ of signature $(p,q)$, $p+q \le d+1$.  We say that
$\RR^{d+1}_{p,q}$ is \emph{non-degenerate} if $p+q = d+1$.  For convenience, we
will assume that
\[
	\left< \bx, \by \right> := \sum_{i=0}^{p-1} x_iy_i - \sum_{i=0}^{q-1}
	x_{d-i}y_{d-i}.
\]

For $p+q \le d+1$, we define $\HH^d_{p,q}=\{\bx \in \RR^{d+1}_{p,q} \mid \left<
\bx, \bx \right> = -1\}$ and $\SS^d_{p,q} = \{\bx \in \RR^{d+1}_{p,q}
\mid \left< \bx, \bx \right> = 1 \}$.  Both $\HH^d_{p,q}$ and $\SS^d_{p,q}$ are
equipped with the metric induced by the inner product.  The metrics of
$\HH^d_{p,q}$ and $\SS^d_{q,p}$ differ only by a sign.

Here and through out this paper, if a space in $\RR^{d+1}_{p,q}$ is denoted by
a blackboard boldface letter, we use the corresponding simple boldface letter
to denote its projectivization in $\RRP^d_{p,q}$.  For example, $\HHP^d_{p,q}$
and $\SSP^d_{p,q}$ are the quotient of $\HH^d_{p,q}$ and $\SS^d_{p,q}$ by the
antipodal map.

\begin{example}\leavevmode
	\begin{itemize}
		\item $\HHP^d := \HHP^d_{d,1}$ is the projective model of the hyperbolic space;

		\item $\HP^d := \HH^d_{d-1,1}$ is the half-pipe space, see \cite{danciger:transition};

		\item $\AdS^d := \HH^d_{d-1,2}$ is the Anti-de Sitter space;

		\item $\SS^d := \SS^d_{d+1,0}$ is the spherical space;

		\item $\dS^d := \SS^d_{d,1}$ is the de Sitter space.
	\end{itemize}
\end{example}

We define $\HS^d_{p,q} = \{ \bx \in \RR^{d+1}_{p,q} \mid |\langle \bx, \bx
\rangle| = 1 \}$.  It is equipped with a complex-valued ``distance'', which
restricts to each connected component as the natural constant curvature metric,
and can be defined in terms of the Hilbert metric of the boundary quadric,
see~\cite{schlenker1998}.  If $p$ and $q$ are both non-zero, $\HS^d_{p,q}$
consists of a copy of $\HH^d_{p,q}$ and a copy of $\SS^d_{p,q}$ identified
along their ideal boundaries.

\begin{example}\leavevmode
	\begin{itemize}
		\item $\HS^d_{d,1}$ consists of two copies of the hyperbolic space $\HHP^d$
			and a copy of the de Sitter space $\dS^d$; we call it the ``hyperbolic-de
			Sitter space'', and simplify the notation to $\HS^d$.

		\item Another situation that concerns us in the future is $\HS^d_{p,p}$,
			$2p<d$ consisting of two copies of $\HH^d_{p,p}$ differing by the sign of
			the metric. We denote it by $2\HH^d_{p,p}$.  In particular,
			$\HS^3_{2,2}=2\AdS^3$.

		\item Up to a sign of metric, there are five possible $\HS^2_{p,q}$
			metrics, namely $\HS^2_{3,0}(= \SS^2)$, $\HS^2_{2,1}(= \HS^2)$,
			$\HS^2_{2,0}$, $\HS^2_{1,1}(= 2\HP^2)$ and $\HS^2_{1,0}$.

		\item Up to a sign of metric, there are three possible $\HS^1_{p,q}$
			metrics.  We call a $1$-subspace \emph{space-}, \emph{light-} or
			\emph{time-}like if it is isometric to $\HS^1_{1,1}$, $\HS^1_{1,0}$ or
			$\HS^1_{2,0}$ respectively.
	\end{itemize}
\end{example}

In an affine chart of $\RRP^d_{p,q}$, the boundary $\partial \HSP^d_{p,q} :=
\partial \HHP^d_{p,q} = \partial \SSP^d_{p,q}$ appears as a quadric in
$\RR^d$.

\begin{example}\leavevmode
	\begin{itemize}
		\item In the affine chart $x_3=1$: $\HHP^3$ appears as a unit open ball;
			$\HPP^3$ appears as the interior of a circular cylinder; $\AdSP^3$ appears
			as the simply connected side of a one-sheeted hyperboloid.

		\item In the affine chart $x_2=1$: $\HHP^3$ appears as the two components of
			the complement of a two-sheeted hyperboloid that do not share a boundary;
			$\HPP^3$ appears as two circular cones; $\AdSP^3$ appears as the non-simply
			connected side of a one-sheeted hyperboloid.
	\end{itemize}
\end{example}

A totally geodesic subspace in $\HSP^d_{p,q}$ is given by a projective subspace
$L \subset \RRP^d$.  If $L$ is of codimension $k$, then the induced metric on
$L$ is isometric to $\HSP^d_{p',q'}$ for some $p'+q' \le d-k$ and, by Cauchy's
interlacing theorem, we have $0 \le p-p' \le k$ and $0 \le q-q' \le k$.  If
$\HSP^d_{p,q}$ is non-degenerate, then there are three possible metrics on a
totally geodesic hyperplane $H$ (codimension $1$).  We say that $H$ is
\emph{space-}, \emph{time-} or \emph{light-like} if it is isometric to
$\HSP^{d-1}_{p,q-1}$, $\HSP^{d-1}_{p-1,q}$ or $\HSP^{d-1}_{p-1,q-1}$,
respectively.  

\begin{example}\leavevmode
	In $\HSP^d$, a hyperplane $H$ is space-like if it is disjoint from the closure
	of $\HHP^d$, time-like if it intersects $\HHP^d$, or light-like if it is
	tangent to the boundary of $\HHP^d$.
\end{example}

The polar of a set $X \subset \RRP^d$ is defined by
\[
	X^* = \{ \left[\bx\right] \colon \left< \bx, \by \right> \le 0 \text{ for all
	} \left[\by\right] \in X \}.
\]
The polar of a subspace $L \subset \RRP^d$ is its orthogonal companion, i.e.\
\[
	L^* = L^\perp = \{ \left[\bx\right] \colon \left< \bx, \by \right> = 0 \text{
	for all } \left[\by\right] \in L \}.
\]
If $\HSP^d_{p,q}$ is non-degenerate, $L$ is isometric to $\HSP^k_{r,s}$ and
$L^\perp$ to $\HSP^{k'}_{r',s'}$, then we have $k+k' = d-1$ and $k-r-s =
k'-r'-s' = p-r-r' = q-s-s'$.  In particular, the polar of a hyperplane $H$ is a
point in $\HHP^d_{p,q}$ if $H$ is space-like, in $\SSP^d_{q,p}$ if $H$ is
time-like, or on $\partial \HSP^d_{p,q}$ if $H$ is light-like.

% Polarity is used to define angles between hyperplanes.  More specifically, the
% angle between two hyperplanes $H_1$ and $H_2$ is defined as $-i$ times the
% distance between the polar points $H_1^*$ and $H_2^*$.

\subsection{Ideal polytopes}

A set $X \subset \RRP^d$ is \emph{convex} if it is convex in some affine chart
that contains it.  Equivalently~\cite{degroot1958}, $X \subset \RRP^d$ is
convex if for any two points $p,q \in X$, exactly one of the two segments
joining $p$ and $q$ is contained in $X$.
\begin{example}\leavevmode
	\begin{itemize}
		\item $\HHP^d$ is convex;

		\item $\AdSP^d$ is not convex;

		\item $\HPP^d$ is convex, but its closure is not.
	\end{itemize}
\end{example}
Two convex sets are \emph{consistent} if some affine chart contains both of
them, or \emph{inconsistent} otherwise.

A \emph{convex hull} of a set $X$ is a minimal convex set containing $X$.  Note
that there is usually more than one convex hull.  A \emph{convex polytope} $P$
is a convex hull of finitely many points.  A (closed) \emph{face} of $P$ is the
intersection of $P$ with a \emph{supporting hyperplane}, i.e.\ a hyperplane
that intersects the boundary of $P$ but disjoint from the interior of $P$.  The
faces of $P$ decompose the boundary $\partial P$ into a cell complex, giving a
face lattice.  Two polytopes are \emph{combinatorially equivalent} if they have
the same face lattice.  The polar $P^*$ is combinatorially dual to $P$, i.e.\
the face lattice of $P^*$ is obtained from $P$ by reversing the inclusion
relations.   We recommend the books~\cite{grunbaum2003, ziegler1995} as general
references for polytope theory.

% % We denote by $\dot P^*$ the first-order deformation of the dual polyhedron
% $P^*$. Note that if $\dot P$ is an arbitrary first-order deformation of $P$,
% then $\dot P^*$ could change the combinatorics of $P^*$, for instance by
% ``splitting'' a vertex in two, if the corresponding, non-triangular face of
% $P$ is ``broken'' into two faces. However here we assume that $\dot P$ does
% not change the combinatorics of $P$, so that $\dot P^*$ does not change the
% combinatorics of $P^*$.
%
% % \begin{remark} % If $\dot P$ does not change the dihedral angles of $P$ (at
% first order) then $\dot P^*$ does not change the edge lengths of $P^*$ (at
% first order).  % \end{remark}
%
% % Moreover since $\dot P$ does not change the combinatorics of $P$, $\dot
% P^*$ does not change the combinatorics of $P^*$.
%

\begin{definition}
	A convex polytope $P \subset \RRP^d$ is \emph{ideal} to $\HSP^d_{p,q}$ if
	all its vertices are on the boundary of $\HSP^d_{p,q}$.  An ideal polytope
	$P$ is \emph{strongly ideal} if the interior of $P$ is disjoint from
	$\partial\HSP^d_{p,q}$, or \emph{weakly ideal} otherwise.
\end{definition}

In the case that $P$ is (strongly) ideal to $\HSP^d_{p,q}$, we also say that
$P$ is (strongly) ideal to $\HHP^d_{p,q}$ or to $\SSP^d_{p,q}$.

A \emph{(polyhedral) HS structure} of a $k$-dimensional manifold is a
triangulation the manifold together with an isometric embedding of each
$k$-simplex into $\HSP^k_{p,q}$, such that the simplices are isometrically
identified on their common faces.  If $\RRP^d$ is equipped with a
$\HSP^d_{p,q}$ metric, then a convex polytope $P \subset \RRP^d$ with $n$
vertices naturally induces an HS structure on the $n$-times punctured
$\SS^{d-1}$ .  If $P$ is ideal to $\HSP^d_{p,q}$, then this metric is
geodesically complete.

In any affine chart, $\partial \HSP^d_{p,q}$ appears as a quadratic surface,
and an ideal polytope appears inscribed to this surface.
% In the following, let $\XX^d \subset \RRP^d$ be $\HH^d$, $\HP^d$ or
% $\AdS^d$.  We are mainly interested in the case $d=3$.

For $\HHP^d$, an ideal polytope $P$ is strongly ideal if and only if it is
consistent with $\HHP^d$~\cite{chen2017}.  Polyhedra strongly ideal to $\HHP^3$
are then inscribed to a sphere.  Their combinatorics was characterized by
Hodgson, Rivin and Smith \cite{hodgson1992}.  Polyhedra strongly ideal to
$\HPP^3$ are inscribed to a circular cylinder.  Polyhedra strongly ideal to
$\AdSP^3$ are inscribed to and contained in a one-sheeted hyperboloid.
Danciger, Maloni and the second author~\cite{danciger2014} have
\emph{essentially} provided characterizations of the combinatoric types of
these polytopes.

We will focus on \emph{weakly ideal} polyhedra, i.e.\ ideal polyhedra that are
not strongly ideal.  We prefer affine charts that contains the polytope $P$;
such an affine charts cannot contain $\HHP^3$, $\HPP^3$ or $\AdSP^3$ by the
discussion above.  Polyhedra weakly ideal to $\HHP^3$ are then inscribed to a
two-sheeted hyperboloid.  Polyhedra weakly ideal to $\HPP^3$ are inscribed to a
circular cone.  And finally, polyhedra weakly ideal to $\AdSP^3$ are inscribed
to, but not contained in, a one-sheeted hyperboloid.  This covers all the
quadratic surfaces, and characterizing the weakly ideal polyhedra in $\HHP^3,
\HPP^3$ and $\AdSP^3$ would provides a complete answer to Steiner's problem.

\section{Overview} \label{sec:overview}
From now on, we will focus on projective polyhedra weakly inscribed to the
sphere, which is equivalent to projective polyhedra weakly ideal to $\HHP^3$,
or Euclidean polyhedra inscribed to the two-sheeted hyperboloid.

Recall that a polyhedron $P$ weakly ideal to $\HHP^3$ is not consistent with
$\HHP^3$.  Since we prefer affine charts containing $P$, $\HHP^3$ would appear,
up to a projective transformation, as the set $x_0^2+x_1^2-x_2^2 < -1$ in such
charts.  This is projectively equivalent to the Klein model.  We use $\HHP^3_+$
and $\HHP^3_-$ to denote the parts of $\HHP^3$ with $x_2>0$ and $x_2<0$,
respectively.  Moreover, the boundary $\partial \HHP^3$ appears as a two-sheeted
hyperboloid.

\subsection{Ideal polyhedra}

For a polyhedron $P$ weakly ideal to $\HHP^3$, let $V$ denotes the set of its
vertices; then $V \subset \partial \HHP^3$ by definition.  We write $V^+=V \cap
\partial\HHP^3_+$ and $V^-=V \cap \partial\HHP^3_-$, and say that $P$ is
$(p,q)$-ideal if $|V^+|=p$ and $|V^-|=q$.  $P$ is strongly ideal if $p=0$ or
$q=0$; we only consider weakly ideal polyhedra, hence $p>0$ and $q>0$.
Following the curves $P \cap \partial \HHP^3$, we label vertices of $V^+$ by
$1^+, \dots, p^+$, and vertices of $V^-$ by $1^-, \dots, q^-$, in the order
compatible with the right-hand rule.

Let $\cP_n$ denote the space of labeled polyhedra with $n \ge 4$ vertices that
are weakly ideal to $\HHP^3$, considered up to hyperbolic isometries, and
$\cP_{p,q}$ denote the space of labeled $(p,q)$-ideal polyhedra, $p + q \ge 4$.
Then $\cP_n$ is the disjoint union of $\cP_{p,q}$ with $p+q=n$.  We only need
to study connected components $\cP_{p,q}$, and may assume $p \le q$ without
loss of generality.  We usually distinguish two cases, namely $p<2<q$ and $2
\le p \le q$.  Note that we always assume that $p\geq 1$ since we only consider
only {\em weakly} ideal polyhedra, so $p<2$ below always means $p=1$.

\subsection{Admissible graphs}

We define a \emph{weighted graph} (or simply \emph{graph}) on a set of vertices
$V$ as a real valued function $w$ defined on the unordered pairs
$\binom{|V|}{2}$.  The \emph{weight} $w_v$ at a vertex $v \in V$ is defined as
the sum $\sum_u \theta(u,v)$ over all $u \ne v$.

Unless stated otherwise, the support of $w$ is understood as the set $E$ of
edges.  We can treat $w$ as a usual graph with edge weight $\theta$, and talk
about notions such as subgraph, planarity and connectedness.  But we will also
take the liberty to include edges of zero weight, as long as it does not
destroy the property in the center of our interest.  For example, graphs in
this paper are used to describe the $1$-skeleta of polyhedra, i.e.\
$3$-connected planar graphs.  Hence whenever convenient, we will consider maximal
planar triangulations.  If this is not the case with the support of $\theta$,
we just triangulate the non-triangle faces by including edges of zero weight.

The advantage of this unconventional definition is that graphs can be treated
as vectors in $\RR^{\binom{|V|}{2}}$.  Weighted graphs of a fixed
combinatorics, together with their subgraphs, then form a linear subspace.
Graphs with a common subgraph correspond to subspaces with nontrivial
intersection.  This makes it convenient to talk about neighborhood,
convergence, etc.  For a fixed polyhedral combinatorics, our main result
implies that the set of weighted graphs form a $(|E|-|V|)$-dimensional cell.
Weighted graphs of a fixed number of vertices then form a cell complex of
dimension $2|V|-6$ in $\RR^{\binom{|V|}{2}}$: The maximal cells correspond to
triangulated (maximal) planar graphs, and they are glued along their faces
corresponding to common subgraphs.

\medskip

Consider an edge $e$ of an ideal polyhedron $P$.  Then $e$ is either a geodesic
in $\HHP^3$, or a time-like geodesic in $\dSP^3$.  In both cases, the faces
bounded by $e$ expand to half-planes forming a hyperbolic exterior dihedral
angle, denoted by $\vartheta$.  We assign to $e$ the \emph{HS exterior dihedral
angle} $\theta$, which equals $\vartheta$ if $e \subset \HHP^3$, or
$-\vartheta$ if $e \subset \dSP^3$.  We will refer to $\theta$ as exterior
angles, dihedral angles, or simply angles, and should not cause any confusion.  

This angle assignment induces a graph on $V$, also denoted by $\theta$,
supported by the edges of $P$.  We have thus obtained a function $\Theta$ that
maps an ideal polyhedron $P$ to the graph $\theta$ of its angles.  Obviously,
$\Theta(P)$ is polyhedral, i.e.\ $3$-connected planar.  We will see that, if
$P$ is $(p,q)$-ideal, then

\renewcommand{\theenumi}{\rm\bf(C\arabic{enumi})}
\renewcommand{\labelenumi}{\theenumi}
\begin{enumerate}
	\item \label{con:2cycles} $\theta=\Theta(P)$ admits a vertex-disjoint cycle
		cover consisting of a $p$-cycle and a $q$-cycle
\end{enumerate}
and, if we color the edges connecting vertices on the same cycle by red, and
those connecting vertices from different cycles by blue, then
\renewcommand{\theenumi}{\rm\bf(A\arabic{enumi})}
\renewcommand{\labelenumi}{\theenumi}
\begin{enumerate}
	\item \label{con:range} $0 < \theta < \pi$ on red edges, and $-\pi < \theta <
		0$ on blue edges;

	\item \label{con:vertex} $\theta_v=\sum_u \theta(u,v)=0$, with the exception
		when $v$ is the only vertex in a $1$-cycle, in which case $\theta_v =
		-2\pi$;

  \item \label{con:negsum} The sum of $\theta$ over blue edges is $\le -2\pi$,
  	and the equality only happens when $p < 2 < q$.
\end{enumerate}

The exception in Condition~\ref{con:vertex} only happens when $p < 2 < q$.

\begin{definition} \label{def:pq}
	A \emph{$(p,q)$-admissible} graph is a weighted polyhedral graph satisfying
	Conditions~\ref{con:2cycles} and \ref{con:range}--\ref{con:negsum}.
\end{definition}

Given a $(p,q)$-admissible graph drawn on the plane, we may label the vertices
on the $p$-cycle by $1^+, \dots, p^+$, and vertices on the $q$-cycle by $1^-,
\dots, q^-$, both in the clockwise order.  Let $\cA_{p,q}$ denote the space of
labeled $(p,q)$-admissible graphs with $p + q \ge 4$.  We use $\cA_n$, $n \ge
4$, to denote the disjoint union of $\cA_{p,q}$ with $p+q=n$.  Our main
Theorem~\ref{thm:weights} is the consequence of the following theorem:

\begin{theorem}\label{thm:angle}
	$\Theta$ is a homeomorphism from $\cP_{p,q}$ to $\cA_{p,q}$.
\end{theorem}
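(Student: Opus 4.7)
The plan is to prove Theorem~\ref{thm:angle} by the continuity method, in the tradition of~\cite{hodgson1992, danciger2014}, with additional care for the fact (emphasized in the introduction) that $\cP_{p,q}$ is not simply connected. I would split the argument into four steps: (a) $\Theta:\cP_{p,q}\to\cA_{p,q}$ is well-defined and continuous, (b) it is a local homeomorphism, (c) it is proper, (d) $\cA_{p,q}$ is connected; then conclude with a degree argument.

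For step (a), continuity is routine from smooth dependence of dihedral angles on vertex positions. Well-definedness requires verifying each admissibility condition. Condition~\ref{con:2cycles} is read off from the trace of $\partial P$ on the two sheets $\partial\HHP^3_\pm$: each sheet is crossed by a single closed curve of $\partial P$ visiting $V^\pm$ cyclically, and blue edges are exactly those whose adjacent faces straddle the two sheets. The sign ranges~\ref{con:range} come from ideal convexity combined with the HS sign convention on $\dSP^3$-edges. Condition~\ref{con:vertex} is a Gauss--Bonnet computation on the link of an ideal vertex, which is Euclidean except at the exceptional $1$-cycle vertex where an extra $-2\pi$ defect appears; and~\ref{con:negsum} is a Gauss--Bonnet computation on the de Sitter strip $\partial P\cap\dSP^3$.

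For step (b), both $\cP_{p,q}$ and $\cA_{p,q}$ are manifolds of dimension $2(p+q)-6$ (the source has $2n$ ideal-vertex parameters modulo the six-dimensional stabilizer of the two sheets; the target has $|E|-|V|=2n-6$ for a triangulation), so by the inverse function theorem it suffices to prove infinitesimal rigidity of $d\Theta_P$ at every $P$. I would adapt the Cauchy/Pogorelov-style argument of~\cite{hodgson1992}: an infinitesimal angle-preserving deformation induces a sign pattern on the edges, and the alternation pattern provided by~\ref{con:alternate} rules out a consistent choice of signs unless the deformation is trivial. The sign flip built into the HS angle convention on $\dSP^3$-edges is precisely the extra ingredient that makes the Cauchy-type argument close in the weakly ideal setting.

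The main obstacle is step (c), properness of $\Theta$. I need to show that if $P_n\in\cP_{p,q}$ leaves every compact set modulo isometries, then $\Theta(P_n)$ leaves every compact set of $\cA_{p,q}$, meaning that some weight tends to $0$ or $\pm\pi$, or the sum over blue edges tends to $-2\pi$, or the combinatorial structure degenerates. The two degeneration modes to analyze are vertex collisions within a single sheet and combinatorial collapses in which a face becomes tangent to the equator of $\partial\HHP^3$; in each case one extracts a limiting HS-polytope and tracks which open admissibility condition is violated. The second mechanism is new relative to the strongly inscribed case and is responsible for the non-trivial topology of $\cP_{p,q}$, since loops in $\cP_{p,q}$ can be realized by de Sitter vertices making full turns around a hyperbolic cap. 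For step (d), connectedness of $\cA_{p,q}$ follows from an explicit path construction: within a fixed triangulation the admissible weights form an open convex subset of the affine subspace cut out by~\ref{con:vertex}, and different triangulations are joined by flips realized as crossings of codimension-one walls in the cell complex introduced in Section~\ref{sec:overview}. With (a)--(d) in hand, $\Theta$ is a proper continuous local homeomorphism between connected manifolds, hence a finite covering map; to promote this to a homeomorphism I would establish degree one, either by a direct global rigidity argument showing that a single symmetric configuration is the unique preimage of its angle graph, or by matching $\pi_1(\cP_{p,q})$ with $\pi_1(\cA_{p,q})$ through an explicit identification of generators with the de Sitter rotation loops mentioned above.
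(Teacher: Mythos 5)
Your global architecture (necessity of the angle conditions, local homeomorphism via infinitesimal rigidity plus a dimension count, properness, hence a covering map, then a monodromy argument sensitive to the non-trivial fundamental group) is the same as the paper's, but the two pivotal technical steps are asserted rather than proved, and in both places your proposed mechanism is doubtful. First, rigidity: you propose a Cauchy-type sign-counting argument ``adapted from \cite{hodgson1992}'', claiming that Condition~\ref{con:alternate} rules out a consistent sign pattern for an angle-preserving infinitesimal deformation. Condition~\ref{con:alternate} is a property of the edge coloring, and you give no mechanism producing the sign changes around vertices or faces that a Cauchy argument needs; more seriously, the bookkeeping used in the strongly ideal case relies on the polyhedron being convex in an affine chart containing $\HHP^3$ and on vertex links being Euclidean horospherical polygons, both of which fail here: a weakly ideal polyhedron is inconsistent with $\HHP^3$, its vertex figures are projective polygons crossing the line at infinity, and the angle sums at vertices are $0$ or $-2\pi$ rather than $2\pi$. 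The paper avoids this entirely: it first proves rigidity with respect to the induced HS structure (Proposition~\ref{prop:rigidmetric}) by transporting isometric first-order deformations through the infinitesimal Pogorelov map (Lemma~\ref{lem:pogorelov-Euclidean}) to Alexandrov's rigidity of Euclidean convex polyhedra, and then deduces angle rigidity (Proposition~\ref{prop:rigidangle}) via the complex shape parameters: if $\dot\tau_e/\tau_e$ is real for every edge, then $i\dot\tau$ is again a deformation (the constraints \eqref{eq:shape1}--\eqref{eq:shape2} are polynomial) which preserves the shears, hence the HS structure, hence is trivial. Without this reduction, or a genuinely worked-out sign scheme for weakly ideal vertex figures, your step (b) is a gap.

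Second, the passage from covering map to homeomorphism. An abstract isomorphism $\pi_1(\cP_{p,q})\cong\pi_1(\cA_{p,q})$ does not give covering number one (compare the degree-two self-cover of the circle); you would need surjectivity of $\Theta_*$ on $\pi_1$, which in turn requires actually computing $\pi_1(\cA_{p,q})$ and showing your ``de Sitter rotation loop'' maps onto a generator --- but your outline establishes nothing about the topology of $\cA_{p,q}$ beyond connectedness (convexity of the fixed-combinatorics strata gives neither the fundamental group nor simple connectivity of any useful subset). Your alternative, a symmetric configuration that is the unique preimage of its angle graph, is a global uniqueness statement for which you indicate no proof. The paper handles this by covering $\cA_{p,q}$ with the open sets $\cA_{p,q}^i$ of graphs containing the blue edge $1^+i^-$, proving each is homeomorphic to $\RR^{2(p+q-3)}$ (via the associahedron description of the positive weights and a greedy assignment of the negative ones), proving the corresponding $\cP_{p,q}^i$ connected, and concluding that the restricted coverings, hence $\Theta$ itself, have covering number one. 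Some explicit analysis of this kind of $\cA_{p,q}$, or of suitable slices of it, is unavoidable and is missing from your plan. A smaller point: the necessity of Condition~\ref{con:negsum} is obtained in the paper by polarity together with the bound on lengths of space-like $\mathcal T$-geodesics on de Sitter polyhedra from \cite{schlenker2001}, not by Gauss--Bonnet on the de Sitter strip, which controls intrinsic quantities rather than the dihedral angles; your claimed derivation there needs justification as well.
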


Note that Condition~\ref{con:negsum} and some inequalities in
Condition~\ref{con:range} are not present in Theorem~\ref{thm:weights}.  We will
see that they are indeed redundant when formulating a feasibility problem.

\subsection{Admissible HS structures}
\label{ssc:HS}

Let $\Delta$ denote the function that maps an ideal polyhedron to its induced
HS structure.  If $P$ is $(p,q)$-ideal, it follows from the definition that
$\Delta(P)$ is geodesically complete, and it is maximal in the sense that it
does not embed isometrically as a proper subset of another HS structure.

The part of $\partial P$ in $\HHP^3$ has no interior vertex, hence is isometric
to a disjoint union of copies of $\HHP^2$.  In the case $2 \le p \le q$, we use
$\HHP^2_\pm$ to denote the copies induced by $\partial P \cap \HHP^3_\pm$.  If
$p < 2 < q$, we have only $\HHP^2_- = \partial P \cap \HHP^3_-$.  The part of
$\partial P$ in $\dSP^3$ has no interior vertex, neither, hence $\partial P \cap
\dSP^3$ is isometric to a complete de Sitter surface.

The intersection of $\partial P$ with a space-like plane in $\dSP^3$ is a
simple polygonal closed space-like curve in $\partial P \cap \dSP^3$. If $2 \le
p \le q$, this polygonal curve can be deformed to one of maximal length, say
$\gamma_0$, which is therefore geodesic in $\partial P \cap \dSP^3$. Considered
as a polygonal curve in $\dSP^3$, $\gamma_0$ is then E-convex in the sense of
\cite[Def 7.13]{schlenker1998}, and it follows that its length $\ell$ is less
than $2\pi$, see \cite[Prop 7.14]{schlenker1998}. As a consequence, $\gamma_0$
is the unique simple closed space-like geodesic in $\partial P \cap \dSP^3$,
because any other simple closed space-like geodesic would need to cross
$\gamma_0$ at least twice (there is no de Sitter annulus with space-like,
geodesic boundary by the Gauss-Bonnet formula), and two successive intersection
points would be separated by a distance $\pi$, leading to a contradiction.  We
denote the metric space $\partial P \cap \dSP^3$ by $\dS^2_\ell$.  $\dS^2_\ell$
has two boundary components, both homeomorphic to a circle.

If $p<2<q$, then one boundary component of $S$ degenerates to a point.  In this
case, the metric space $\partial P \cap \dSP^3$ does not contain any closed
space-like geodesic, and we denote it by $\dS^2_0$.

Hence $\Delta(P)$ is obtained by gluing one or two copies of $\HHP^2$ to the
non-degenerate boundary components of a de Sitter surface.  Let $\gamma_\pm$ be
the map that glues $\partial \HHP^2_\pm$ to $\partial \dS^2_\ell$.  We will see
that $\gamma_\pm$ are $C^1$ piecewise projective maps (CPP maps for short).
More specifically, they are projective except at the vertices of $P$.  The
points where the map is not projective are called \emph{break points}.  A break
point is said to be positive (resp.\ negative) if the jump in the second
derivative at this point is positive (resp.\ negative).  We will see that the
break points of $\gamma_\pm$ are all positive.

\begin{definition}
	A \emph{$(p,q)$-admissible} HS structure, $p+q \ge 4$, is obtained

	\begin{description}
		\item[In the case $p<2<q$] by gluing a copy of $\HHP^2$ to $\dS^2_0$ along the
			non-degenerate ideal boundary by a CPP map with $q$ positive break
			points.
	
		\item[In the case $2 \le p \le q$] by gluing two copies of $\HHP^2$ to
			$\dS^2_\ell$, $0 < \ell < 2\pi$, along the ideal boundaries through CPP
			maps with, respectively, $p$ and $q$ positive break points.
	\end{description}
\end{definition}

In Figure~\ref{fig:metric} we sketch the situation of $p=2$ and $q=3$ is
sketched in Figure~\ref{fig:metric}.

\begin{figure}[hbt] 
  \centering 
  \includegraphics[width=.2\textwidth]{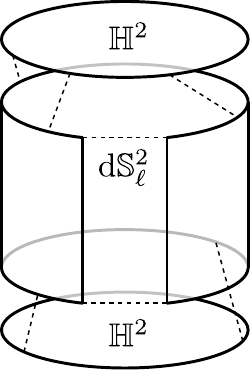}
 	\caption{
 		A sketch of a $(2,3)$-admissible HS structure.  The dashed segments
 		indicates the gluing maps, including one that produces $\dS^2_\ell$, and
 		two CPP maps with two and three break points, respectively.
 		\label{fig:metric}
	}
\end{figure}

Given a $(p,q)$-admissible HS structure, we may label the break points in the
two boundary components of $S$ by $1^+, \dots, p^+$ and $1^-, \dots, q^-$,
respectively.  Let $\cM_{p,q}$ denote the space of $(p,q)$-ideal HS structures
up to isometries.  Our main Theorem~\ref{thm:induced} is the consequence of the
following theorem.

\begin{theorem}\label{thm:metric}
	$\Delta$ is a homeomorphism from $\cP_{p,q}$ to $\cM_{p,q}$.
\end{theorem}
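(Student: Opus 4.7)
The plan is to realize $\Delta$ as a proper continuous local homeomorphism between connected manifolds of equal dimension, and then to conclude that it is a global homeomorphism by a degree argument that takes into account the non-trivial topology of $\cP_{p,q}$. I first verify well-definedness: for a weakly ideal polyhedron $P$, the set $\partial P \cap \HHP^3_\pm$ is a hyperbolic polygon with only ideal vertices, $\partial P \cap \dSP^3$ is a de Sitter strip whose space-like core geodesic has length $\ell \in (0,2\pi)$ when $p\ge 2$ (or degenerates to a point if $p=1$), and the gluing along each ideal vertex $v$ is $C^1$ piecewise projective with a positive jump in the second derivative. All of this is a local computation in the link of $v$ inside a transverse totally geodesic $\HSP^2$; the sign of the jump is forced by the convexity of $P$. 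Continuity of $\Delta$ is immediate from the construction.

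The substantive local statement is infinitesimal rigidity: any $C^1$ deformation $(P_t)$ of $P$ through elements of $\cP_{p,q}$ that preserves $\Delta(P_t)$ is trivial modulo the action of $\mathrm{Isom}(\HHP^3)$. Each hyperbolic cap $\HHP^2_\pm$ deforms only via boundary homographies, so rigidity reduces to controlling the de Sitter strip subject to its CPP boundary conditions; this is a variant of the infinitesimal rigidity for convex polyhedral surfaces in $\dSP^3$ developed in~\cite{schlenker1998}, with ideal boundary data incorporated via the projective structure at infinity introduced in Section~\ref{ssc:HS}. Combined with a dimension count $\dim \cP_{p,q} = \dim \cM_{p,q} = 2(p+q) - 6$, obtained by parametrizing $\cP_{p,q}$ by $p+q$ points on $\partial \HHP^3$ modulo the six-dimensional isometry group, and parametrizing $\cM_{p,q}$ by the length $\ell$ together with the positions of the break points and the residual projective freedom of each gluing map, rigidity upgrades $\Delta$ to a local homeomorphism.

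For properness, I would enumerate the boundary degenerations accessible to $\cP_{p,q}$: two vertices on the same sheet of $\partial \HHP^3$ colliding, an edge of $P$ becoming light-like, the polyhedron flattening, a hyperbolic face degenerating. Each of these produces a matching degeneration of $\Delta(P)$, namely coalescence of break points on the corresponding boundary circle, loss of the $C^1$ or positive-jump property of a gluing map, $\ell$ hitting $0$ or $2\pi$, or collapse of a hyperbolic cap. A systematic check of this correspondence shows that $\Delta$ and its formal inverse both send compact families to compact families.

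The previous steps show that $\Delta$ is a covering between connected manifolds. To conclude it is a homeomorphism, I would match $\pi_1$ on both sides: $\cP_{p,q}$ carries a natural rotation loop obtained by turning $V^-$ relative to $V^+$ along $\partial\HHP^3$, and $\cM_{p,q}$ carries a corresponding loop obtained by twisting the gluing map $\gamma_-$ relative to $\gamma_+$ around the core geodesic of $\dS^2_\ell$; $\Delta$ sends one to the other, so $\Delta_*$ is an isomorphism of fundamental groups and the covering has degree one. The main anticipated obstacle is the infinitesimal rigidity at the hyperbolic--de Sitter interface, where the ideal matching condition couples two different geometries and the standard rigidity arguments must be adapted to the CPP boundary data; the non-simple-connectedness of $\cP_{p,q}$ flagged in the introduction is then handled cleanly by the $\pi_1$ comparison rather than producing a monodromy obstruction.
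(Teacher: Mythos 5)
Your overall architecture---show the image lies in $\cM_{p,q}$, combine infinitesimal rigidity with an equal-dimension count to get a local homeomorphism, use properness to get a covering, then kill the monodromy coming from the non-trivial $\pi_1$---is exactly the paper's plan, and your degree-one step (matching the rotation loop of $\cP_{p,q}$ with the twist loop of $\cM_{p,q}$) is equivalent to the paper's passage to universal covers interpreted as marked polyhedra and marked HS structures. But the two steps that carry the real weight are left as assertions, and as stated they do not go through. First, the rigidity step: appealing to ``a variant of the infinitesimal rigidity for convex polyhedral surfaces in $\dSP^3$'' with CPP ideal boundary data is not an available off-the-shelf result, and the reduction ``the hyperbolic caps deform only via boundary homographies, so everything reduces to the de Sitter strip'' is not an argument---the surface $\partial P$ is not contained in $\dSP^3$, its de Sitter part is noncompact with ideal boundary, and the coupling of the two geometries at the ideal vertices is precisely the difficulty you flag but do not resolve. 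The paper proves Proposition~\ref{prop:rigidmetric} by a different mechanism: it transports the facet-wise Killing fields of a putative isometric deformation through the infinitesimal Pogorelov map (Lemma~\ref{lem:pogorelov-Euclidean}) into Euclidean Killing fields and invokes Alexandrov's infinitesimal rigidity of convex Euclidean polyhedra \cite{alexandrov1950}; some such transfer, or a genuinely new rigidity proof, is needed in your plan.

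Second, everything after rigidity presupposes the topology of the two spaces: that $\cP_{p,q}$ is connected, that $\cM_{p,q}$ is a connected manifold of dimension $2(p+q-3)$, and that its fundamental group is generated by the twist loop you invoke. You assert a parametrization of $\cM_{p,q}$ by $\ell$, the break points and the ``residual projective freedom'', but establishing that this is a genuine parametrization is where much of the paper's work lies: the space $\cG_{B,\ell}$ of CPP gluing maps with prescribed positive break points is identified with a space of horocyclic polygons, shown to be a cell of the right dimension (Proposition~\ref{prop:2p-4}), and only then are the dimension, connectedness and $\pi_1$ of $\cM_{p,q}$ known; connectedness of $\cP_{p,q}$ also needs its own deformation argument. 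A minor point on properness: an edge of an ideal polyhedron cannot ``become light-like'' without its endpoints colliding, so that item in your list is spurious, while the degeneration the paper must (and does) treat is a vertex falling into the relative interior of a facet of the hull of the remaining vertices, including the totally flat limit.
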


\subsection{Outline of proofs}

We will prove that $\Theta$ and $\Delta$ are local immersions
(Section~\ref{sec:rigidity}) with images in $\cA_{p,q}$
(Section~\ref{sec:extrinsic}) and $\cM_{p,q}$ (Section~\ref{sec:intrinsic}),
respectively.  They are then local homeomorphisms because $\cP_{p,q}$,
$\cA_{p,q}$ and $\cM_{p,q}$ have the same dimension $2(p+q-3)$
(Section~\ref{sec:topology}).  Moreover, they are proper maps
(Section~\ref{sec:properness}), hence are covering maps.  A difference from the
previous works lies in the fact that $\cP_{p,q}$, $\cA_{p,q}$ and $\cM_{p,q}$
are not simply connected if $2 \le p \le q$.  We will use open covers and
universal covers to conclude that the covering numbers of $\Theta$ and $\Delta$
are one (Section~\ref{sec:topology}).

\section{Necessity}\label{sec:necessity}
\subsection{Combinatorial conditions}

We first verify combinatorial Condition~\ref{con:2cycles}.  For this we will
need some lemmata about convex sets in $\RRP^d$.

\begin{lemma}\label{lem:convcomp}
	Let $A$ and $B$ be two convex sets in $\RRP^d$.  If $A$ and $B$ are
	consistent, then $A \cap B$ consists of at most one connected component.  If
	$A$ and $B$ are inconsistent, then $A \cap B$ consists of exactly two
	connected components.
\end{lemma}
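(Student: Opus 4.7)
The plan is to work on the double cover $\pi : \SS^d \to \RRP^d$. Recall that any convex $A \subset \RRP^d$ lifts to two antipodal spherically convex components $A_+, A_- = -A_+ \subset \SS^d$, each contained in an open hemisphere (this is equivalent to $A$ being convex in an affine chart). Write $B_\pm$ for the analogous lifts of $B$. Then
\[
  \pi^{-1}(A \cap B) = (A_+ \cap B_+) \sqcup (A_+ \cap B_-) \sqcup (A_- \cap B_+) \sqcup (A_- \cap B_-),
\]
where the first and last pieces are exchanged by the antipodal map, as are the middle two. Each non-empty piece $A_\epsilon \cap B_\delta$ is spherically convex (as an intersection of spherically convex sets) and, being contained in $A_\epsilon$, sits inside an open hemisphere, hence is connected. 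So the components of $A \cap B$ correspond bijectively to the non-empty antipodal pairs above, of which there are at most two.

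For part (a), when $A$ and $B$ share an affine chart $U \cong \RR^d$, both are Euclidean-convex subsets of $U$, so $A \cap B$ is Euclidean-convex and therefore connected (or empty); at most one component.

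For part (b), I claim that inconsistency forces both antipodal pairs to be non-empty. Suppose for contradiction that one, say $A_+ \cap B_- = \emptyset$. Then $A_+$ and $B_-$ are disjoint spherically convex subsets of $\SS^d$, each in an open hemisphere, so the cones $\RR_{>0} A_+$ and $\RR_{>0} B_-$ are disjoint pointed convex cones in $\RR^{d+1}$, each lying in an open half-space. A Hahn--Banach separation (passing to closures if needed for strictness, and exploiting that pointed cones in open half-spaces can be strictly separated through the origin) yields $v \in \SS^d$ with $A_+ \subset \{\langle v, \cdot \rangle > 0\}$ and $B_- \subset \{\langle v, \cdot \rangle < 0\}$. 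Then $B_+ = -B_-$ lies in $\{\langle v, \cdot \rangle > 0\}$ as well, placing $A_+ \cup B_+$ in a common open hemisphere, whose image in $\RRP^d$ is an affine chart containing both $A$ and $B$, contradicting inconsistency. A symmetric argument handles the case $A_+ \cap B_+ = \emptyset$, so both antipodal pairs must be non-empty. The two resulting components in $\RRP^d$ are genuinely distinct: a coincidence $[p] = [q]$ with $p \in A_+ \cap B_+$ and $q \in A_+ \cap B_-$ would force $q = \pm p$, and $q = p$ is ruled out by $B_+ \cap B_- = \emptyset$, while $q = -p$ is ruled out by $A_+$ lying in an open hemisphere.

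The main obstacle is the Hahn--Banach step: in full generality, $A_+$ and $B_-$ need not be closed, so one may have to pass to closures or exploit the pointedness of the associated cones to upgrade a weak separation to a strict one. Since each cone lies in an open half-space of $\RR^{d+1}$, the required pointedness is built in, and the argument goes through.
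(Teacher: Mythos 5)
Your overall route is essentially the paper's: lift to the double cover $\SS^d$, use that each lift of a convex set is spherically convex inside an open hemisphere, get the consistent case from Euclidean convexity, and get the inconsistent case from a separation argument through the origin (the paper runs this as the contrapositive, citing a spherical hyperplane separation theorem; you run it directly on the cones). Where you genuinely differ is the ``at most two'' count: the paper reduces to $d=2$ and derives a contradiction from three components via the six segments they span, whereas you get at most two components for free from the decomposition of $\pi^{-1}(A\cap B)$ into two antipodal pairs of spherically convex pieces. That part of your argument is cleaner than the paper's. One small caveat there: to get \emph{exactly} two components you need the two nonempty pieces to be separated, not merely disjoint (two disjoint connected sets can have connected union); this is easily repaired, since $\overline{A_+\cap B_+}$ lies in the closed hemisphere containing $A_+$ (resp.\ $B_+$) and hence cannot meet $A_+\cap B_-$ or its antipode, so neither piece meets the closure of the other.

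The genuine gap is exactly the step you flagged, and your proposed fix does not work: pointedness of the cones does \emph{not} yield strict separation. Take $d=1$, $C_1=\{y>0\}\subset\RR^2$ (the cone over an open semicircle $A_+$) and $C_2=\{x<0,\ y\le 0\}\setminus\{0\}$ (the cone over a half-open arc $B_-$ containing $(-1,0)$). Both cones are pointed and lie in open half-spaces, they are disjoint, but the only line through the origin weakly separating them is $y=0$, which meets $C_2$; no strict separation exists. Worse, this is not just a technical nuisance: downstairs, $A=\RRP^1\setminus\{[e_1]\}$ and $B=\pi(B_+)$ (a half-open arc through $[e_1]$) are convex, inconsistent, and yet $A\cap B$ is connected, so the implication ``one antipodal pair empty $\Rightarrow$ consistent'' is simply false for arbitrary convex sets, and no amount of cone-pointedness can close the step. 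What is actually needed is a hypothesis forcing strictness, e.g.\ that the sets are open, or closed, or properly convex (closure contained in an affine chart), which holds in the situations where the paper applies the lemma ($A=\HHP^3$ and $B$ a compact ideal polyhedron). To be fair, the paper's own proof invokes the spherical separation theorem with the same implicit strictness requirement, so your proposal is no worse than the published argument at this point; but since you singled this out as the main obstacle, you should be aware that your stated resolution is incorrect as a general claim, and the step should instead be justified under the regularity hypotheses actually available in the applications.
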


\begin{proof}
	If $A$ and $B$ are consistent, we can regard them as convex sets in Euclidean
	space.  Hence they are either disjoint, or their intersection is convex,
	hence connected.

	Conversely, if $A \cap B$ consists of at most one connected component, they
	can be lifted to two convex sets $A'$ and $B'$ of $\mathbb{S}^d$.  We may
	assume that $A' \cap B' = \emptyset$.  If this is not the case, it suffices
	to replace $B'$ by its antipodal image.  By the spherical hyperplane
	separation theorem, $A'$ and $B'$ is separated by a spherical hyperplane.  We
	then project back to $\RRP^d$, taking this separating hyperplane at infinity.
	This gives us an affine chart that contains both $A$ and $B$.

	Consequently, if $A$ and $B$ are inconsistent, $A \cap B$ will have at least
	two connected components.  To prove that there are exactly two components, we
	only need to work in dimension $d=2$.  Higher dimensional cases follow by
	restricting to a $2$-dimensional subspace.

	\begin{figure}[hbt]
  	\centering
  	\includegraphics[width=.3\textwidth]{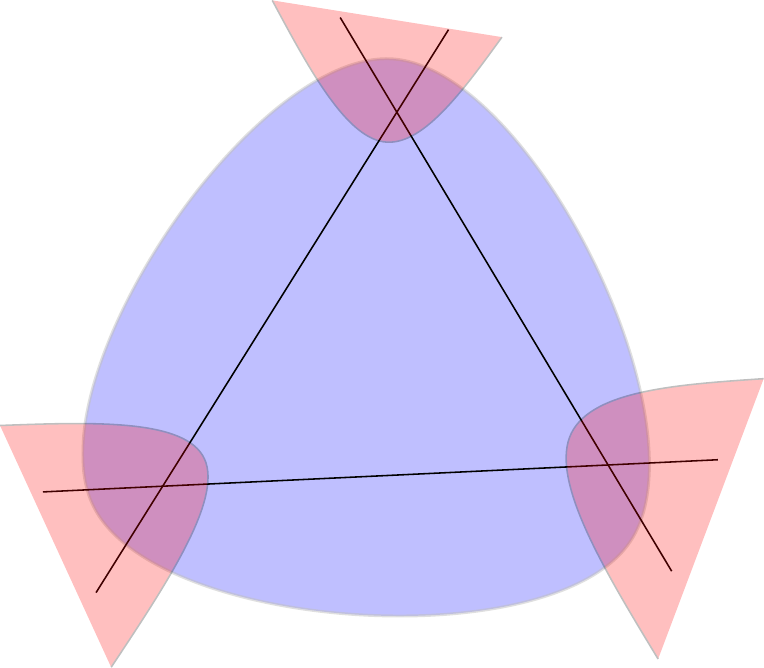}
  	\caption{
  		\label{fig:3components}
		}
	\end{figure}

	For the sake of contradiction, assume that $A \cap B$ has three components,
	and take a point from each of them.  The three points determine six segments.
	In an affine chart that contains $A$, the three bounded segments are
	contained in $A$, the three unbounded segments are contained in $B$.  The
	situation is illustrated in Figure~\ref{fig:3components}, where $A$ is blue
	and $B$ is red.  Since $B$ is convex, there should be a projective line
	avoiding $B$.  To avoid finite points of $B$ in the chosen affine chart, such
	a line must be parallel to one of the three lines spanned by the points.  But
	this means that this line intersects $B$ at infinity.  Hence such a
	projective line does not exist, contradicting the convexity of $B$.
\end{proof}

If $A$ and $B$ are two inconsistent convex regions in $\RRP^2$, then $\partial
A \cap \partial B$ consists of at most four connected components, at most two
on the boundary of each connected component of $A \cap B$.  Otherwise, either
the interior or the closure of $A \cap B$ would consist of more than two
connected components, contradicting the lemma above.

A particular case is when $A = \HHP^2$ and $B$ is a polygon, and their boundary
intersect at the vertices of $B$, i.e.\ $B$ is weakly ideal to $\HSP^2$.  In
this case, Lemma~\ref{lem:convcomp} implies

\begin{corollary}\leavevmode
	\begin{itemize}
		\item Any polygon strongly ideal to $\HSP^2$ with at least three vertices
			is disjoint from $\dSP^2$.

		\item Any polygon ideal to $\HSP^2$ with at least five vertices is
			strongly ideal in $\HSP^2$.

		\item A weakly ideal polygon $P$ has three or four vertices, at most two in
			each connected component of $\partial P \cap \HHP^2$. 
	\end{itemize}
\end{corollary}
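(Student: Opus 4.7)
The plan is to derive all three items uniformly from Lemma~\ref{lem:convcomp} and the paragraph following it, applied with $A=\HHP^2$ and $B=P$, together with the equivalence recalled earlier that a polygon ideal to $\HSP^2$ is strongly ideal if and only if it is consistent with $\HHP^2$.

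I would first handle the second item (which then immediately implies the cardinality part of the third). Assume $P$ is ideal but not strongly ideal, so that $P$ and $\HHP^2$ are inconsistent. Lemma~\ref{lem:convcomp} then forces $P \cap \HHP^2$ to have exactly two connected components, and the paragraph following the lemma forces $\partial P \cap \partial \HHP^2$ to have at most four connected components, at most two on the boundary of each component of $P \cap \HHP^2$. The one small observation needed is that each vertex of $P$ is an isolated point of $\partial P \cap \partial \HHP^2$, and hence forms its own connected component: since $\partial \HHP^2$ is a non-degenerate conic, a line meets it in at most two points, so no edge of $P$ can lie on $\partial \HHP^2$. This bounds the number of vertices of a weakly ideal polygon by four, giving the contrapositive of the second item. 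Since a polygon has at least three vertices, weakly ideal polygons have three or four vertices, and the ``at most two per component'' distribution in the third item is precisely the statement from the paragraph after the lemma.

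For the first item I would argue directly. With $P$ strongly ideal, $P$ and $\HHP^2$ share an affine chart in which $\HHP^2$ appears as the open interior of an ellipse. Any three vertices of $P$ lie on this ellipse, are therefore not collinear, and span a nondegenerate triangle $T \subset P$ whose open interior lies in the interior of the ellipse, i.e., in $\HHP^2$. Hence the interior of $P$ meets $\HHP^2$; being connected and disjoint from $\partial \HHP^2$ by strong ideality, it is entirely contained in $\HHP^2$. Each edge of $P$ is then a chord of the ellipse between two of its boundary points, hence contained in $\overline{\HHP^2}$, so $P \subset \overline{\HHP^2}$ is disjoint from $\dSP^2$. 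I do not expect a real obstacle: Lemma~\ref{lem:convcomp} and the remark following it do essentially all the work, and the only point requiring care is the isolation of vertices in $\partial P \cap \partial \HHP^2$, where the non-degeneracy of the conic is used.
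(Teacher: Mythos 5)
Your proposal is correct and follows essentially the same route as the paper: the corollary is deduced from Lemma~\ref{lem:convcomp} and the remark following it, applied with $A=\HHP^2$ and $B=P$, together with the fact that $\partial P\cap\partial\HHP^2$ consists exactly of the (isolated) vertices and the cited equivalence between strong ideality and consistency. You merely make explicit some details the paper leaves implicit (no edge can lie on the non-degenerate conic, and the direct chord argument for the first item), which is consistent with the paper's terse derivation.
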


A dual version of this corollary was proved in~\cite{chen2017}.  Notice that
there is only one possibility for a weakly ideal triangle; see
Figure~\ref{fig:weaktriangle}.

\begin{figure}[hbt]
  \centering
  \includegraphics[width=.3\textwidth]{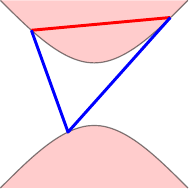}
  \caption{
  	\label{fig:weaktriangle}
	}
\end{figure}

Moreover, it is known that every connected component of $A \cap B$ is
convex~\cite{toda2010}.

\begin{lemma}\label{lem:contractible}
	Let $A$ and $B$ be two inconsistent convex sets in $\RRP^d$, and $C_1$,
	$C_2$ be the connected components of $A \cap B$.  Then $\partial C_i \cap
	\partial A$ and $\partial C_i \cap \partial B$, $i=1,2$, are all
	contractible.
\end{lemma}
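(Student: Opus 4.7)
The plan is to pass to a convenient affine chart, identify the two pieces of $\partial C_i$ in terms of affine convex sets, and then reduce the contractibility claim to a statement about the topology of the boundary of a convex set.

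First, I would fix an affine chart $U \subset \RRP^d$ containing $A$, which exists because $A$ is convex in $\RRP^d$. Then $C_1, C_2 \subset A \subset U$. Since $A$ and $B$ are inconsistent, $B \not\subset U$; lifting to the double cover $\mathbb{S}^d \to \RRP^d$, one sees that in $U$ the intersection $B \cap U$ splits as a disjoint union $B_+ \sqcup B_-$ of two affinely convex open pieces, corresponding to the two hemispheres of $\mathbb{S}^d$ determined by $U$, and separated by an affine hyperplane $H \subset U$ coming from the equator of the hemisphere containing the lift of $B$. Relabeling if necessary, I set $C_1 = A \cap B_+$ and $C_2 = A \cap B_-$, both nonempty and convex.

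A direct computation using convexity yields
\[
  \partial C_1 \cap \partial A \;=\; \partial A \cap \overline{B_+}, \qquad \partial C_1 \cap \partial B \;=\; \partial B_+ \cap \overline{A},
\]
and the analogous identities for $\partial C_2$ with $B_-$ in place of $B_+$. Thus the lemma reduces to the contractibility of the right-hand sides; and since the argument for $\partial B_+ \cap \overline{A}$ is entirely symmetric (swap the roles of $A$ and $B_+$ and choose a chart containing $B$ instead), it suffices to treat $\partial A \cap \overline{B_+}$.

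For the main step I would fix an interior point $q \in C_2^\circ$ and use the radial projection $\pi \colon \partial A \to S^{d-1}$ from $q$, which is a homeomorphism because $A$ is convex and $q$ is interior to $A$. Then $\Omega := \pi(\partial A \cap \overline{B_+}) \subset S^{d-1}$ is a closed subset whose boundary in $S^{d-1}$ I expect to be the image of the corner set $\Sigma := \partial A \cap \partial B_+ \cap \overline{C_1}$. The conclusion follows if $\Sigma$ is homeomorphic to $S^{d-2}$ and embeds in $\partial A \cong S^{d-1}$ as a separating sphere, for then Jordan--Brouwer divides $\partial A$ into two closed topological disks, one of which is $\partial A \cap \overline{B_+}$.

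The main obstacle is establishing that $\Sigma$ is a topological $(d-2)$-sphere. The hypothesis $C_2 \neq \emptyset$ is crucial here, as it forces $A$ to cross the separating hyperplane $H$, so $A \cap H$ is full-dimensional in $H$ and $\partial A \cap \overline{B_+}$ is a proper subset of $\partial A$ (not the whole sphere). In the base case $d = 2$, $\Sigma$ is a finite set and the discussion following Lemma~\ref{lem:convcomp}, combined with the nonemptiness of both $C_1$ and $C_2$, gives $|\Sigma| = 2$, i.e., $\Sigma \cong S^0$. For $d \geq 3$ the plan is to argue inductively: slicing by a generic hyperplane through $A^\circ \cap H$ cuts down to a lower-dimensional instance of the same problem, where the inductive hypothesis controls the components of $\Sigma$. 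Degenerate configurations (where $C_i$ is lower-dimensional or where $\Sigma$ collapses) will need to be treated as simpler separate cases.
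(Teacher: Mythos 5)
Your chart reduction and the identifications of $\partial C_i\cap\partial A$ and $\partial C_i\cap\partial B$ with sets of the form $\partial A\cap\overline{B_+}$ are reasonable, but the proof has a genuine gap exactly where you flag it yourself: you never prove that the corner set $\Sigma=\partial A\cap\partial B_+\cap\overline{C_1}$ is a topological $(d-2)$-sphere; you only announce a plan to slice by generic hyperplanes and induct, and you defer the degenerate cases. Moreover, the sphere claim is not true in all configurations the lemma must cover: $\partial A$ and $\partial B$ may share flat pieces or be tangent along lower-dimensional sets (nothing in the hypotheses excludes this, and the polyhedra inscribed to a quadric to which the lemma is applied sit exactly on this borderline), and then $\Sigma$ is not a sphere and the Jordan--Brouwer dichotomy never gets started; the contractibility conclusion still has to be proved in those cases. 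Finally, even in the generic situation where $\Sigma\cong S^{d-2}$, Jordan--Brouwer only gives that $\partial A\setminus\Sigma$ has two components; it does not give that the closed complementary pieces are disks, nor even that they are contractible --- that is the Schoenflies problem, which fails for general topological $(d-2)$-spheres in $\partial A\cong S^{d-1}$ as soon as $d-1\ge 3$ (Alexander horned sphere). To exclude wild behaviour you would have to invoke convexity again, and the natural way to do so is essentially the direct argument your reduction was meant to avoid.

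For comparison, the paper's proof needs no sphere recognition at all: fix $p\in\partial C_1\cap\partial B$ (a point of the \emph{other} component) and work in a chart containing $A$. For $q\in\partial C_2\cap\partial B$ the bounded segment $[pq]$ avoids the interior of $B$, hence the interior of $C_2$, while any $q'\in\partial C_2\setminus\partial B$ lies in the interior of $B$, so $[pq']$ must cross the interior of $C_2$. Hence $\partial C_2\cap\partial B$ is precisely the portion of $\partial C_2$ visible from the exterior point $p$, and the visible part of the boundary of a convex set from an external point is contractible. If you wish to salvage your route, replace the Jordan--Brouwer/Schoenflies step by a visibility or radial-projection argument of this kind, showing directly that $\pi\bigl(\partial A\cap\overline{B_+}\bigr)$ is a contractible subset of $S^{d-1}$, and treat the tangential and lower-dimensional configurations uniformly instead of postponing them.
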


\begin{proof}
	We only need to argue for $\partial C_2 \cap \partial B$.  The other cases
	follow similarly.
	
	We work in an affine chart containing $A$; thus it does not contain $B$.  Let
	$p \in \partial C_1 \cap \partial B$.  Then for any $q \in \partial C_2 \cap
	\partial B$, the bounded closed segment $[pq]$ is disjoint from the interior
	of $B$, hence also from the interior of $C_2$.  On the other hand, any $q'
	\in \partial C_2 \setminus \partial B$ (if not empty) is in the interior of
	$B$, hence $[pq']$ must intersect the interior of $C_2$.  In other words,
	$\partial C_2 \cap \partial B$ is the part of $\partial C_2$ ``visible'' from
	$p$, which must be contractible as $C_2$ is.

	The proof for $d=2$ is illustrated in Figure~\ref{fig:contractible}.
\end{proof}

\begin{figure}[htb]
  \centering
  \includegraphics[width=.3\textwidth]{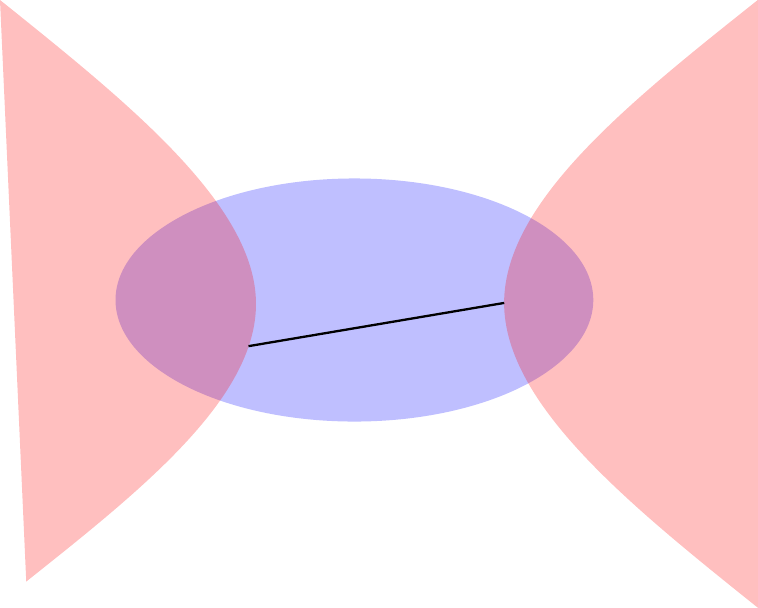}
  \caption{
  	\label{fig:contractible}
	}
\end{figure}

We call an (open) face (that is vertex, edge or facet) $F$ of $P$
\emph{interior} if $F \subset \overline{\HHP^3}$, or \emph{exterior} if $F
\subset \dS^3$.  For example, every vertex of an ideal polyhedron is interior,
and every face of a strongly ideal polyhedron is interior.  An edge of an ideal
polyhedron is either interior or exterior.  Let $\mathcal{I}(P)$ be the union
of interior faces, and $\mathcal{E}(P)$ be the union of exterior faces.

\begin{proposition}[Condition~\ref{con:2cycles} and more]\label{prop:Hstruct}
	Let $P$ be a polyhedron weakly ideal to $\HHP^3$.  Then $\mathcal{I}(P)$
	consists of two connected components, both contractible.  A component is
	homeomorphic to a closed disk if it contains at least three vertices.
	Vertices in each component induce an outerplanar graph.  Moreover,
	$\mathcal{E}(P)$ consists of disjoint open segments; there is no exterior
	facet.
\end{proposition}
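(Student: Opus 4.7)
\emph{Plan.} The plan is to apply Lemmas~\ref{lem:convcomp} and~\ref{lem:contractible} to the pair $(P,\overline{\HHP^3})$, which is inconsistent since $P$ is weakly ideal, and to rule out exterior facets by invoking the preceding Corollary on weakly ideal polygons inside each candidate facet plane.

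First, Lemma~\ref{lem:convcomp} provides two closed convex components $K_1,K_2$ of $P\cap\overline{\HHP^3}$; in the chart containing $P$ the closures $\overline{\HHP^3_+}$ and $\overline{\HHP^3_-}$ are disjoint, so each $K_i$ lies entirely in one of them. The boundary of $K_i$ decomposes as $\partial K_i=(\partial K_i\cap\partial P)\cup(\partial K_i\cap\partial\HHP^3)$, glued along the vertices of $P$ sitting on $K_i$, and both pieces are contractible by Lemma~\ref{lem:contractible}. I would identify the two pieces $\partial K_i\cap\partial P$ with the two components of $\mathcal{I}(P)$: by definition an interior face is a face of $P$ lying in $\overline{\HHP^3}$, so its closure sits in $\partial K_i\cap\partial P$ for a unique $i$. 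When $K_i$ carries at least three vertices of $P$, the $2$-sphere $\partial K_i$ is cut by a Jordan curve through those vertices into two closed $2$-disks, so $\partial K_i\cap\partial P$ is a closed disk: the vertices lie on its boundary circle and the interior edges joining them appear as chords inside the disk, giving the outerplanar structure by definition. The degenerate cases where $K_i$ collapses to a single vertex or a segment (possible precisely in the $(1,q)$-type situations of Section~\ref{sec:overview}) produce components of $\mathcal{I}(P)$ that are a single vertex or edge, for which contractibility and outerplanarity are immediate.

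Second, for $\mathcal{E}(P)$, no vertex is exterior since vertices lie on $\partial\HHP^3$. To rule out an exterior facet $F\subset\dSP^3$, I would note that its supporting plane $\Pi$ cannot be space-like (which would miss $\partial\HHP^3$ and carry no vertex of $F$) nor light-like (which meets $\partial\HHP^3$ at a single point, giving at most one vertex), so $\Pi$ is time-like and inherits an $\HS^2$ structure in which $F$ is ideal. The preceding Corollary on weakly ideal polygons then forces $F$ to be either strongly ideal to $\HS^2$, hence contained in $\overline{\HHP^2}\subset\overline{\HHP^3}$ and contradicting $F\subset\dSP^3$, or weakly ideal with relative interior meeting $\partial\HHP^2$, again contradicting $F\subset\dSP^3$. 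Hence the only exterior faces are open edges linking vertices on different sheets, and since distinct open edges meet only at vertices (which lie in $\partial\HHP^3$, not $\dSP^3$), $\mathcal{E}(P)$ is a disjoint union of open segments.

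The main obstacle I anticipate is the identification of each topological piece $\partial K_i\cap\partial P$ with the corresponding subcomplex of $\mathcal{I}(P)$: the mixed facets of $P$ (those having vertices on both sheets, neither interior nor exterior) contribute pieces of their relative interiors to the boundaries of both $K_1$ and $K_2$, so one must check that each such piece deformation retracts onto the interior faces it contains in order for the contractibility provided by Lemma~\ref{lem:contractible} to transfer to the cellular components of $\mathcal{I}(P)$.
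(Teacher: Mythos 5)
Your overall strategy coincides with the paper's: apply Lemma~\ref{lem:convcomp} and Lemma~\ref{lem:contractible} to the inconsistent pair $(P,\overline{\HHP^3})$, and rule out exterior facets by noting that a facet not contained in $\overline{\HHP^3}$ is weakly ideal in its span, so its relative interior meets $\partial\HHP^3$; that second part of your argument is complete and is essentially the paper's. The gap is in the first part: the proposition is a statement about $\mathcal{I}(P)$, the union of the \emph{interior faces}, whereas what Lemma~\ref{lem:contractible} controls is $\partial K_i\cap\partial P$, which is strictly larger --- every mixed facet (vertices on both sheets) contributes a two-dimensional piece of its relative interior to $\partial K_i\cap\partial P$ without being an interior face. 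You flag this yourself (``one must check that each such piece deformation retracts onto the interior faces it contains''), but you never carry out the check, and this is precisely where the paper does its real work: using the classification of weakly ideal polygons (the Corollary following Lemma~\ref{lem:convcomp} and Figure~\ref{fig:weaktriangle}), each non-interior facet has at most two vertices in each component of its hyperbolic part, and its intersection with $\overline{\HHP^3}$ retracts onto its unique interior edge (or onto the single vertex on the other side); these retractions assemble into a homotopy equivalence from each component of $\partial P\cap\overline{\HHP^3}$ onto the corresponding component of $\mathcal{I}(P)$. Without it, the contractibility and disk statements you obtain concern $\partial K_i\cap\partial P$, not $\mathcal{I}(P)$; and even granting the retraction, a deformation retract of a disk need not be a disk, so the disk claim still needs the paper's combinatorial step (boundary edges forming a Hamiltonian cycle, hence a $2$-connected outerplanar graph).

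A second unproved assertion is that the interface curve passes through all the vertices on that sheet, i.e.\ that no vertex of $P$ lies in the relative interior of the hyperbolic part of $\partial P$. This is the substance of outerplanarity and it genuinely requires an argument: the paper derives it from Lemma~\ref{lem:convcomp} by cutting with a hyperplane through a putative interior vertex (alternatively, if all neighbours of a vertex $v$ lay on the same sheet, then $P$ would be contained in the closed half-space bounded by the tangent plane of $\partial\HHP^3$ at $v$, which misses the other sheet, contradicting weak ideality). As written, your proposal identifies the right lemmas and the right obstacle, but the two steps above --- the retraction onto $\mathcal{I}(P)$ and the location of the vertices on the boundary --- are exactly the content of the proposition and remain open in your argument.
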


\begin{proof}
	Assume a triangular facet $F$ that is not interior, then it would be a weakly
	ideal in $\Span(F)$.  Hence $P$ has no exterior facet.  The only exterior
	faces are edges.  Then we observe from Figure~\ref{fig:weaktriangle} that,
	whenever $F$ is not interior, $F \cap \partial \HHP^3$ is an arc which is
	homeomorphic to the unique interior edge of $F$.  This homeomorphism induces
	an homotopy from $\partial P \cap \overline{\HHP^3}$ to $\mathcal{I}(P)$.  The
	former is contractible by Lemma~\ref{lem:contractible}, hence so is
	$\mathcal{I}(P)$.

	The vertices of $P$ are all on the boundaries of $\partial P \cap
	\overline{\HHP^3}$ (and of $\mathcal{I}(P)$).  Otherwise, through a vertex in
	the interior of $\partial P \cap \overline{\HHP^3}$, we can find an hyperplane
	$H$ such that the intersection of $H \cap \partial P$ and $H \cap \partial
	\HHP^3$ consists of more than two connected components, contracting
	Lemma~\ref{lem:convcomp}.  Hence the vertices of each component induce an
	outerplanar graph.  If there are at least three vertices in this component,
	the boundary edges form a Hamiltonian cycle (of the induced graph), so the
	outerplanar graph is $2$-connected.  We then conclude that the component of
	$\mathcal{I}(P)$ is homeomorphic to a disk.
\end{proof}

This proves the necessity of Condition~\ref{con:2cycles} since the boundary
edges of the $2$-connected outerplanar graphs form a cycle cover consisting of
two cycles.
% This also justifies the coloring used in the previous part, since
% interior edges connect vertices from the same cycle, and exterior edges connect
% vertices from different cycles.

Induction in higher dimensions yields that the union of interior faces consists
of two contractible components, and vertices and edges in each component form a
2-connected graph.  However, it is possible that a component is not
homeomorphic to the $(d-1)$-ball, as shown in the following example.

\begin{example}
	Consider the eight points $[\pm \sqrt{2}, \pm 1, 0, 0]$,
	$[2(1-\epsilon+\epsilon^2), 0, \pm(1-\epsilon), \pm \epsilon]$, taking all
	possible combinations of $\pm$.  Their convex hull is a $4$-dimensional
	polytope weakly inscribed to the two-sheeted hyperboloid defined by the
	equation $-x_0^2+x_1^2+x_2^2+x_3^2=-1$.  If $\epsilon$ is sufficiently small,
	the polytope has two interior facets in $\HHP^4_+$, whose intersection is a
	single edge connecting $[\sqrt{2}, \pm 1, 0, 0]$.  Hence the union of
	interior faces is not homeomorphic to a $3$-ball.
\end{example}

\subsection{Angle conditions} \label{sec:extrinsic}

We now prove that the conditions in Theorem \ref{thm:angle} are necessary.

\begin{proposition}
	\[
		\Theta(\cP_{p,q}) \subseteq \cA_{p,q}.
	\]
\end{proposition}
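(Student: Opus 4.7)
The plan is to verify conditions~(C1) and~\ref{con:range}--\ref{con:negsum} in turn for $\theta=\Theta(P)$ with $P\in\cP_{p,q}$. Condition~(C1) is essentially already contained in Proposition~\ref{prop:Hstruct}: the two components of $\mathcal{I}(P)$ are disks whose vertex sets are $V^+$ and $V^-$, each carrying a $2$-connected outerplanar graph, and the two outer Hamiltonian cycles provide the desired $p$-cycle and $q$-cycle cover, consisting precisely of the red edges.

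For~\ref{con:range}, I plan to use the convexity of each closed component $\overline{\HHP^3_\pm}$ in our chosen affine chart. A red edge joins two vertices on the same sheet of $\partial\HHP^3$, hence lies in a common $\overline{\HHP^3_\pm}$ and is interior; convexity of $P$ then gives $\vartheta\in(0,\pi)$ and so $\theta\in(0,\pi)$. A blue edge joins the two sheets, so its open part must cross the connected region $\dSP^3$ separating them, making the edge exterior and $\theta=-\vartheta\in(-\pi,0)$.

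The main technical step is~\ref{con:vertex}. At a vertex $v$ with at least one red edge, I would take a horosphere $\mathcal{H}$ tangent to $\partial\HHP^3$ at $v$ on the appropriate $\HHP^3_\pm$ side. The cross-section $P\cap\mathcal{H}$ is a Euclidean polygonal region (horospheres being intrinsically flat), with finite corners at the red-edge crossings (each with interior angle $\pi-\vartheta_e$) and unbounded ends corresponding to the blue edges (whose incident facets unfold into $\dSP^3$ without meeting $\mathcal{H}$). A total-turning computation on this region, identifying the turning at each blue-edge end as exactly $\vartheta_e^{\text{blue}}$, then yields $\sum_{\text{red at }v}\vartheta_e-\sum_{\text{blue at }v}\vartheta_e=0$, i.e.\ $\theta_v=0$. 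The exceptional case is the $1$-cycle vertex $v^+$ when $p=1$, all of whose edges are blue: here the analogous link is a genuine closed de Sitter polygon, and Gauss--Bonnet with $K=+1$ gives $\sum\vartheta_e=2\pi$, hence $\theta_{v^+}=-2\pi$.

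Finally,~\ref{con:negsum} follows by applying Gauss--Bonnet to the de Sitter surface $S=\partial P\cap\dSP^3$, which has $K=+1$ and Euler characteristic $\chi(S)=0$ when $2\le p\le q$ (annulus) or $\chi(S)=1$ when $p<2<q$ (disk with a degenerate boundary point). The interior corners of $S$ are the blue edges of $P$, each contributing exterior angle $\vartheta_e$, and after checking that the ideal boundary $\partial S\subset\partial\HHP^3$ contributes nothing, Gauss--Bonnet reads $\mathrm{Area}(S)+\sum_{\text{blue}}\vartheta_e=2\pi\chi(S)$; since $\mathrm{Area}(S)>0$ strictly when $S$ is a non-degenerate annulus, this rearranges to $\sum_{\text{blue}}\theta_e\le-2\pi$ with the stated equality case. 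The main obstacle will be step~\ref{con:vertex}, specifically identifying the ``turning at infinity'' contribution of each blue-edge end rigorously as $\vartheta_e$; if this direct horospherical argument proves cumbersome, an alternative is to use the Schl\"afli formula to show that $\theta_v$ is locally constant under deformations within $\cP_{p,q}$ and then to evaluate it on a symmetric model.
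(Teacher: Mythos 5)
Your treatment of Condition~\ref{con:2cycles} (via Proposition~\ref{prop:Hstruct}) and of Condition~\ref{con:range} is fine and matches the paper, but the two analytic steps both have genuine gaps. For Condition~\ref{con:vertex}: the horospherical cross-section $P\cap\mathcal{H}$, with $\mathcal{H}$ a small horosphere in $\HHP^3$ based at $v$, is an \emph{unbounded convex} polygonal region in the flat plane $\mathcal{H}$, and such a region has exactly \emph{two} unbounded rays, lying on the two faces $F^\leftarrow,F^\rightarrow$ that cross $\partial\HHP^3$ near $v$ --- not one end per blue edge. The blue edges at $v$ leave $v$ directly into $\dSP^3$ and never meet $\mathcal{H}$, so their dihedral angles are simply invisible in this cross-section; there is no meaningful ``turning at each blue-edge end'' to identify with $\vartheta_e$, and the total turning of the boundary of an unbounded convex region only records the angle between its two asymptotic directions. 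To see the blue angles you must look at the full vertex figure $P/v$, i.e.\ the projection from $v$ viewed in the projective plane with $v^\perp$ as the line at infinity (the horosphere only furnishes the affine chart): $P/v$ is a single convex polygon crossing the line at infinity in exactly two edges, the turning direction reverses each time one passes through infinity, so red vertices contribute $+\vartheta_e$ and blue vertices $-\vartheta_e$, and the signed total vanishes. This is the paper's argument, and it also handles the $1$-cycle vertex (the polygon then lies in one affine chart and the total turning is $2\pi$, taken with the clockwise sign, giving $-2\pi$); your appeal to ``Gauss--Bonnet with $K=+1$'' for that case is misplaced, since the link of an \emph{ideal} vertex is modelled on a flat horosphere, not a unit sphere.

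For Condition~\ref{con:negsum} the proposed intrinsic Gauss--Bonnet on $S=\partial P\cap\dSP^3$ cannot work: since all vertices of $P$ are ideal, the induced (intrinsic) metric on $S$ is a smooth de Sitter metric, and the blue edges are extrinsic bending lines, not intrinsic corners --- they contribute nothing to any intrinsic Gauss--Bonnet formula, so the dihedral angles never enter. (In addition, Gauss--Bonnet for a Lorentzian surface whose boundary is the degenerate ideal curve $P\cap\partial\HHP^3$ is delicate, and even formally your bookkeeping fails: in the annulus case $\chi(S)=0$ your identity would force $\sum\vartheta_e=-\mathrm{Area}(S)<0$ although each $\vartheta_e>0$, and your equality/strictness assignment is the reverse of the true one, since equality holds precisely when $p<2<q$.) The paper's route is extrinsic and goes through polarity: for $2\le p\le q$ the polar $P^*$ is a compact polyhedron in $\dSP^3$ with light-like faces, the edges of $P^*$ dual to the blue edges of $P$ form a closed space-like polygonal curve of length $-\sum_{\mathrm{blue}}\theta$, this curve is a $\mathcal{T}$-geodesic of the HS-structure on $\partial P^*$, and Schlenker's length bound (\cite[Theorem 1.5]{schlenker2001}) gives length strictly less than $2\pi$; the case $p=1$ is read off from the vertex figure as above. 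If you want to avoid polarity, you would need some substitute for this total-bending bound; intrinsic curvature of $S$ will not provide it.
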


For that, we need to verify all the conditions defining an admissible graph.

We color interior edges by red, and exterior edges by blue, and prove the angle
conditions with respect to this coloring.  In the previous part we have seen
that this coloring coincides with the combinatorial description in
Theorem~\ref{thm:angle}.

Among the conditions that involve angles, Condition~\ref{con:range} comes from
the definition of angle.  To see Condition~\ref{con:vertex} we need the vertex
figures.

\medskip

Recall that a \emph{horosphere} in $\HHP^d$ based at an ideal point $x \in
\partial\HHP^d$ is a hypersurface that intersects orthogonally all the geodesics
emerging from $x$.  In the projective model of $\HHP^3$, horospheres appear as
flattened spheres tangent to the hyperbolic boundary.  Similarly, a horosphere
in $\dSP^d$ based at $x \in \partial\dSP^d$ is a hypersurface that intersects
orthogonally all the geodesics emerging from $x$.  Horospheres in $\HHP^d$ and
$\dSP^d$ are paired through polarity: the set in $\dSP^d$ polar to a horosphere
in $\HHP^d$ is a horosphere with the same base point, and vice versa.  See
Figure~\ref{fig:horocycles}.  In the following, by a horosphere in $\HSP^d$, we
mean a horosphere in $\HHP^d$ or in $\dSP^d$.

\begin{figure}[hbt] 
  \centering 
  \includegraphics[width=.4\textwidth]{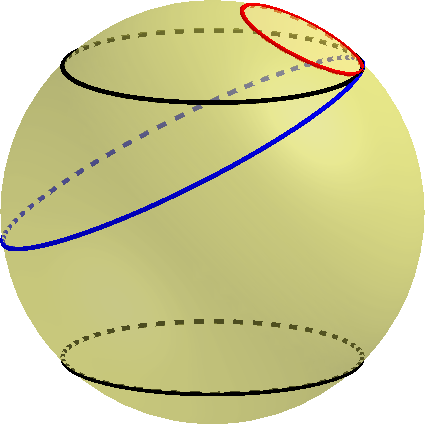}
 	\caption{
 		Horocycles in $\HH^2$ (red) and in $\dS^2$ (blue).  The black circles are
 		the boundaries of $\HSP^2$.  \label{fig:horocycles}
	}
\end{figure}

Now consider an ideal polyhedron $P$.  The \emph{vertex figure} of $P$ at a
vertex $v$, denoted by $P/v$, is the projection of $P$ with respect to $v$.
$P/v$ is therefore a polygon in $\RRP^2$.  A chart is provided by a horosphere
in $\HSP^3$ based at $v$.  This chart sends $v^\perp$ to the line at infinity,
hence does not contain $P/v$ unless the neighborhood of $v$ in $\partial P$ is
contained in $\HHP^3$ or in $\dSP^3$.

\begin{figure}[hbt]
  \centering
  \includegraphics[width=.3\textwidth]{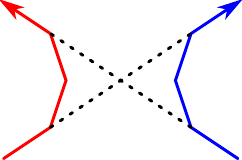}
  \caption{
  	A typical vertex figure. \label{fig:vertexfig}
	}
\end{figure}

Figure~\ref{fig:vertexfig} shows a typical situation of $P/v$ not contained in
the horosphere.  The vertices of the polygon $P/v$ correspond to the edges of
$P$ adjacent to $v$.  If one walks along the polygon, it can be arranged (as in
Figure~\ref{fig:vertexfig}) that he turns anti-clockwise at the vertices
corresponding to red edges, and clockwise at the vertices corresponding to blue
edges.  Indeed, the turning direction switches when the walker passes through
infinity.  The turning angles are then the dihedral angles $\theta$ at the
corresponding edges, taking anti-clockwise turns as positive, and clockwise
turns as negative.  Condition~\ref{con:vertex} then follows immediately.

\medskip

We now verify the last angle condition.

\begin{proposition}[Condition~\ref{con:negsum}]
  If $P$ is weakly ideal, then $\Theta(P)$ sum up to at least $-2\pi$ over the
  blue edges, and $-2\pi$ is achieved only when $p<2<q$.
\end{proposition}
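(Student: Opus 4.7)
The proof splits along $p = 1$ versus $p \ge 2$, both cases building on the vertex identity of Condition~\ref{con:vertex} already established above.

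In the case $p = 1$, write $V^+ = \{v_0\}$. Every edge at $v_0$ is blue, and every blue edge has exactly one endpoint in $V^+$, namely $v_0$. Thus $B = \theta_{v_0}$, which by Condition~\ref{con:vertex} equals $-2\pi$ since $v_0$ is the single vertex of a $1$-cycle; this gives the claimed equality.

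When $p \ge 2$, every $v \in V^+$ is non-exceptional, so $\theta_v = 0$. Summing over $v \in V^+$, each red edge with both endpoints in $V^+$ is counted twice and each blue edge exactly once. Writing $R_+ = \sum_{e \text{ red in } V^+} \theta_e > 0$, we obtain $2 R_+ + B = 0$, hence $B = -2 R_+$. The desired strict inequality $B > -2\pi$ therefore reduces to showing $R_+ < \pi$.

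For $R_+ < \pi$, the plan is to pass to the de Sitter annulus $S = \partial P \cap \dSP^3$. It is a polyhedral Lorentzian annulus whose two boundary components lie on $\partial \HHP^3_\pm$ and which is bent along each blue edge by an exterior angle $\vartheta_e = -\theta_e > 0$, so $\sum_e \vartheta_e = -B = 2 R_+$. By the setup of Section~\ref{ssc:HS}, $S$ contains a unique simple closed space-like geodesic $\gamma_0$, and by Prop.~7.14 of \cite{schlenker1998} its length $\ell$ satisfies $\ell < 2\pi$. Viewed in the ambient $\dSP^3$, $\gamma_0$ becomes a closed polygonal E-convex space-like curve whose geodesic arcs meet at its transversal crossings with the blue edges, at each of which the polygon turns by $\vartheta_e$. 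Applying the length-plus-turning identity for such curves, i.e.\ the polygonal analog of the estimate cited above, yields
\[
\ell + \sum_e \vartheta_e = 2\pi,
\]
whence $2 R_+ = 2\pi - \ell < 2\pi$ and so $R_+ < \pi$, as required.

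The main technical hurdle is to prove the polygonal identity above together with the combinatorial claim that $\gamma_0$ crosses each blue edge of $P$ exactly once. The latter follows from the uniqueness of $\gamma_0$ and the convexity of each cell of $S$ in $\dSP^3$. The former is the discrete analog of Schlenker's smooth estimate; I expect it to follow by cutting $S$ along $\gamma_0$ into two disks and applying the Lorentzian Gauss--Bonnet theorem on each, balancing the curvature integral, the length of the geodesic boundary, and the bending concentrations along blue edges against the Euler characteristic of a disk.
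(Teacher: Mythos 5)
Your $p=1$ case and your reduction of the case $p\ge 2$ to the bound $R_+<\pi$ (equivalently $-B<2\pi$) are fine, but the step that is supposed to deliver that bound --- the identity $\ell+\sum_e\vartheta_e=2\pi$ for the closed space-like geodesic $\gamma_0\subset\partial P\cap\dSP^3$ --- is false, and the Gauss--Bonnet argument you sketch cannot produce it. The bending angles $\vartheta_e$ are purely extrinsic: the induced metric on $\partial P\cap\dSP^3$ is a smooth de Sitter metric across the blue edges (this is precisely why the paper can call this surface $\dS^2_\ell$), so no term $\sum_e\vartheta_e$ can appear in an intrinsic Gauss--Bonnet balance; moreover $\gamma_0$ is an intrinsic geodesic, and its extrinsic turning in $\dSP^3$ where it crosses a blue edge depends on the incidence angle, not only on $\vartheta_e$ (also, cutting the annulus along $\gamma_0$ yields two annuli with ideal boundary, not disks). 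Concretely, in the chart where $\partial\HHP^3$ is the hyperboloid $z^2=x^2+y^2+1$, take the tetrahedron with vertices $(\pm a,0,h)$ and $(0,\pm a,-h)$, $h=\sqrt{a^2+1}$. A direct computation gives exterior angle $\vartheta_e=\arccos\tfrac{a^2}{a^2+2}$ at each of the four blue edges (this is consistent with Condition~\ref{con:vertex}, which forces the red angle to be twice the blue one), so $\sum_e\vartheta_e=4\arccos\tfrac{a^2}{a^2+2}$, while $\partial P\cap\{z=0\}$ is a simple closed space-like polygon of length $4\arccos\tfrac{2}{a^2+2}$, whence $\ell\ge 4\arccos\tfrac{2}{a^2+2}$. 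Since $\tfrac{a^2}{a^2+2}+\tfrac{2}{a^2+2}=1$, one checks $\arccos\tfrac{a^2}{a^2+2}+\arccos\tfrac{2}{a^2+2}>\tfrac{\pi}{2}$ for every $a>0$, so in fact $\ell+\sum_e\vartheta_e>2\pi$: the relation you need goes in the wrong direction, and your scheme does not recover the (true) strict inequality.

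The curve whose length actually equals $\sum_e\vartheta_e$ is not $\gamma_0$ but its polar counterpart, and this is the route the paper takes: for $2\le p\le q$ the polar dual $P^*$ is a compact polyhedron in $\dSP^3$ with light-like faces, the edges of $P^*$ dual to the blue edges of $P$ form a closed space-like polygonal curve whose edge lengths are exactly the angles $\vartheta_e$, this curve is a $\mathcal T$-geodesic for the HS structure induced on $\partial P^*$, and point C of \cite[Theorem 1.5]{schlenker2001} bounds its length strictly by $2\pi$, giving $-B<2\pi$ at once. If you want to avoid the polar polyhedron, you would have to prove an analogue of that length bound for the curve of dual edges directly; the estimate $\ell<2\pi$ for $\gamma_0$ from \cite[Prop 7.14]{schlenker1998} that you invoke concerns a different curve and does not transfer to the angle sum.
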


\begin{proof}
  It is clear from the vertex figure that the sum over blue edges is $-2\pi$
  when $p=1$.  Hence we focus on the case $2 \le p \le q$ and prove that the
  sum over blue edges is strictly larger than $-2\pi$.

  If $2\le p\le q$, the polar $P^*$ of $P$ is a compact polyhedron in $\dSP^3$.
  All its faces are light-like (isotropic).  The edges of $P^*$ polar to the
  blue edges of $P$ form a closed space-like polygonal curve $\gamma$, whose
  vertices are polar to the non-interior facets of $P$.

  The polygonal curve $\gamma$ is a ${\mathcal T}$-geodesic for the induced
  HS-structure on $P^*$; see \cite[Definition 3.4]{schlenker2001}. It then
  follows from point C. in \cite[Theorem 1.5]{schlenker2001} that $\gamma$ has
  length strictly less than $2\pi$.  The proposition follows by polarity.
\end{proof}

\subsection{Metric conditions}\label{sec:intrinsic}

We show in this part that the conditions of Theorem \ref{thm:induced} are
necessary.

\begin{theorem}
	\[
		\Delta(\cP_{p,q}) \subseteq \cM_{p,q}.
	\]
\end{theorem}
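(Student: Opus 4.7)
The plan is to build on the decomposition $\partial P = \mathcal I(P) \cup (\partial P \cap \dSP^3)$ given by Proposition~\ref{prop:Hstruct} and on the identification of $\partial P \cap \dSP^3$ with $\dS^2_\ell$, $\ell \in (0,2\pi)$, when $2 \le p \le q$, or with $\dS^2_0$ when $p<2<q$, carried out in Section~\ref{ssc:HS} using the unique simple closed space-like geodesic. What remains is to show that (i) each non-degenerate component of $\mathcal I(P)$ is isometric to a copy of $\HHP^2$; (ii) the gluing maps $\gamma_\pm\colon \partial\HHP^2_\pm \to \partial(\partial P \cap \dSP^3)$ are $C^1$ and piecewise projective; and (iii) all of the $p$ and $q$ break points are positive. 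Completeness of the resulting HS structure is immediate from compactness of $\partial P$, and maximality will follow once positivity is in hand.

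First, each non-degenerate component of $\mathcal I(P)$ is a topological disk obtained by isometrically gluing ideal hyperbolic polygons --- the interior facets, each viewed as an ideal polygon in the $2$-plane it supports --- along interior edges, which are hyperbolic geodesics. Simple connectedness allows the developing map to extend to an isometric embedding into $\HHP^2$, with the $p$ (or $q$) vertices of $P$ on that side becoming ideal points. Next, between two consecutive vertices of $P$ on $\partial P \cap \partial\HHP^3_\pm$ the boundary curve lies on a single non-interior facet $F$ and coincides with an arc of the conic $F \cap \partial\HHP^3$; the projective structures induced on this arc from the hyperbolic and de Sitter sides both come from the ambient projective geometry of the plane $F$, so $\gamma_\pm$ is projective on each such arc and has break points exactly at the vertices of $P$.

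For the $C^1$ and positivity statements I would work in a horospherical chart at each vertex $v$, as in Section~\ref{sec:extrinsic}, in which $\partial\HHP^3$ becomes affine and the tangent plane $T_v\partial\HHP^3$ lands at infinity. The two adjacent arcs of the boundary curve through $v$ become arcs of parabolae $F_i \cap \partial\HHP^3$, $i=1,2$, both tangent to the same direction at $v$; this yields $C^1$-regularity of $\gamma_\pm$. The jump of the second derivative at $v$ is a projective invariant of the pair of tangent parabolae, and convexity of $P$ together with the fact that the edge $F_1 \cap F_2$ through $v$ is exterior (blue) forces the jump to be strictly positive.

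The main obstacle is this last step: identifying the second-derivative jump at a break point with a convexity-type invariant, and simultaneously confirming that the intrinsic projective structure on $\partial\HHP^2_\pm$ coming from the developing map agrees on each arc with the one read off from the ambient plane $F$, so that ``piecewise projective'' becomes a genuinely intrinsic statement. Once that is settled, maximality follows because a positive break point obstructs any isometric extension of the HS structure across it.
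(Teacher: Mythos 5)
Your outline matches the paper for the easy parts: the decomposition of $\partial P$ into one or two hyperbolic pieces plus a de Sitter piece, and piecewise projectivity of $\gamma_\pm$ because each arc between consecutive vertices lies in a single weakly ideal face, are exactly how the paper proceeds. But the two points that carry the real content --- the $C^1$ property of the gluing maps and the positivity of the break points --- are not established, and the mechanism you propose for $C^1$ is incorrect. You claim that at a vertex $v$ the two adjacent arcs of $\partial P\cap\partial\HHP^3$ (lying in the two faces $F^\leftarrow$, $F^\rightarrow$ that meet $\partial\HHP^3$ near $v$) are ``tangent to the same direction at $v$''. They are not: the tangent line at $v$ of the conic $F\cap\partial\HHP^3$ is $F\cap T_v\partial\HHP^3$, and since $F^\leftarrow$ and $F^\rightarrow$ intersect only along the blue edge through $v$, which is not contained in $T_v\partial\HHP^3$, these two tangent lines are distinct; the ambient boundary curve has a genuine corner at every vertex. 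More fundamentally, the $C^1$ statement is not about the ambient curve at all but about the map between the \emph{developed} ideal boundaries of the intrinsic structures, so it must be phrased through the shift (holonomy) $D_v\gamma=(\gamma^\rightarrow_v)^{-1}\circ\gamma^\leftarrow_v$. The paper proves that $D_v\gamma$ preserves all horocycles based at $v$ (Proposition~\ref{prop:C1}), by intersecting a horosphere at $v$ both with $\partial P$ and with the wedge bounded by the extensions of $F^\leftarrow$ and $F^\rightarrow$, and then invokes the equivalence of Lemma~\ref{lem:C1}; nothing in your chart-with-parabolae picture substitutes for this.

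The positivity of the break points is the second gap, and you concede it yourself (``the main obstacle''), so the proposal stops short of the theorem. The paper's argument is concrete: write $D_v\gamma$, conjugated so that $v$ goes to infinity, as a translation by $d_v(\gamma)$, relate the sign of $d_v(\gamma)$ to the sign of the jump of the second derivative (Lemma~\ref{lem:C2}), and then observe that $P$ is obtained from the dihedral wedge at $v$ by truncations with planes through $v$, each of which replaces a horocyclic segment by a shorter one (Proposition~\ref{prop:C2}, or Martin's Lemma~6.5 as in Remark~\ref{rem:martin}); this is where the convexity of $P$ actually enters, and it is not a routine ``projective invariant of a pair of tangent parabolae'' computation. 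Your claims about (i) developing the hyperbolic components into $\HHP^2$ and about the intrinsic versus ambient projective structure on the arcs are fine and essentially what the paper asserts, but without the shift/horocycle argument for $C^1$ and the truncation argument for positivity the inclusion $\Delta(\cP_{p,q})\subseteq\cM_{p,q}$ is not proved.
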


If $P$ is $(p,q)$-ideal, we have argued that $\Delta(P)$ consists of one or two
copies of $\HHP^2$ and a de Sitter surface.  It remains to verify that the
pieces are glued along their ideal boundaries by CPP ($C^1$ piecewise
projective) maps with positive break points at the vertices of $P$.  We may
focus on the boundary component of $\dS^2_\ell$, $\ell \ge 0$, consisting of
$q>1$ vertices of $P$.  Let $\gamma$ be the map that sends $\partial \HHP^2$ to
this boundary.

We can identify $\partial\HHP^2$ to the real projective line $\RRP^1$.  Let
$v_0 < v_1 < \dots < v_p = v_0$, in this order, be the $p$ vertices of $P$.
They divide $\RRP^1$ into $p$ segments $[v_i, v_{i+1}]$.  Each segment
corresponds to a segment of $\partial \HSP^2$ in the interior of a face
triangle weakly ideal to $\HSP^2$.  Hence for each $i$, $0 \le i < p$, the
restriction $\gamma_i = \gamma|_{[v_i, v_{i+1}]}$ coincides with the
restriction of an element of $\PSL(2,\RR)$.  This proves that
$\gamma$ is piecewise projective.

To study differentiabilities, we will follow the work of
Martin~\cite{martin2005} on CPP homeomorphisms of $\RRP^1$.  Note that
$\partial \dS^2_\ell$ is not projectively equivalent to $\RRP^1$.  But our CPP
maps are local homeomorphisms.  Hence the definitions and many results
from~\cite{martin2005} remain valid for our case.

\begin{definition}[\cite{martin2005}]
	For $x \in \RRP^1$, let $\gamma^\leftarrow_x \in \PSL(2,\RR)$
	(resp.\ $\gamma^\rightarrow_x \in \PSL(2,\RR)$) be the left (resp.\
	right) germ of a piece-wise projective map $\gamma$.  The projective
	transformation $D_x\gamma = (\gamma^\rightarrow_x)^{-1} \circ
	\gamma^\leftarrow_x$ is called the \emph{shift} of $\gamma$ at $x$.
\end{definition}

% Hence the shift at $x$ should be understood as the holonomy of the HS
% structure along a simple curve that goes around $x$.

The projective transformations $\gamma^\leftarrow_x$, $\gamma^\rightarrow_x$
and $D_x\gamma$ extend uniquely to isometries of $\HSP^2$.  We abuse the same
notations for these extensions.  The shift $D_x\gamma$ measures how much
$\gamma$ fails to be projective at $x$.  It can be understood as the holonomy
of the HS structure along a curve going around $x$.  The following lemma
reveals the relation between the shift and the differentiability of $\gamma$.

\begin{lemma} \label{lem:C1}
	$\gamma$ is $C^1$ at $x$ if and only if $D_x\gamma$ preserves the horocycles
	based at $x$. 
\end{lemma}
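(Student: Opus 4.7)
The plan is to reduce the statement to a one-line chain-rule computation, followed by the classical classification of the stabilizer of a point in $\PSL(2,\RR)$.

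First I would record the immediate consequence of continuity: since $\gamma$ is (at least) continuous at $x$, its left and right projective germs satisfy $\gamma^\leftarrow_x(x) = \gamma^\rightarrow_x(x)$, so $D_x\gamma$ fixes $x$. The $C^1$ condition then amounts to the additional requirement that $(\gamma^\leftarrow_x)'(x) = (\gamma^\rightarrow_x)'(x)$.

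Next I would compute the derivative of the shift at its fixed point. Working in an affine chart of $\RRP^1$ in which $x$ is finite (changing chart if needed, since the notion of derivative at a point of $\RRP^1$ is well-defined up to the obvious cocycle), the chain rule applied to $D_x\gamma = (\gamma^\rightarrow_x)^{-1}\circ\gamma^\leftarrow_x$ gives
\[
(D_x\gamma)'(x) = \frac{(\gamma^\leftarrow_x)'(x)}{(\gamma^\rightarrow_x)'(x)}.
\]
Consequently, $\gamma$ is $C^1$ at $x$ if and only if $(D_x\gamma)'(x) = 1$.

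It then remains to identify the elements of $\PSL(2,\RR)$ that fix $x \in \RRP^1$ and have derivative $1$ at $x$ with those that preserve the horocycles based at $x$. This is classical: in the upper half-plane model of $\HHP^2$, take $x = \infty$; the stabilizer of $x$ consists of affine maps $z \mapsto az+b$ with $a>0$, whose derivative at $\infty$ (computed in the chart $w=1/z$) equals $1/a$. Preservation of every horizontal line, i.e.\ every horocycle at $\infty$, is equivalent to $a=1$, i.e.\ to the map being a translation (parabolic or identity). By polarity, such an isometry of $\HSP^2$ automatically preserves the horocycles at $x$ that lie in $\dSP^2$ as well, so no ambiguity arises from the two types of horocycles appearing in Figure~\ref{fig:horocycles}. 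Combining the three steps yields the claimed equivalence. No substantial obstacle is expected; the only mild subtlety is to ensure that the derivative at $x$ is understood intrinsically on $\RRP^1$, which is handled by choosing an affine chart around $x$.
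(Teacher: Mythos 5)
Your proof is correct and follows essentially the same route as the paper: the paper's ``if'' direction is exactly your argument (the shift fixes the point, horocycle-preservation forces the parabolic normal form $t\mapsto t/(ct+1)$, hence derivative $1$ and $C^1$-ness), and your chain-rule identity $(D_x\gamma)'(x)=(\gamma^\leftarrow_x)'(x)/(\gamma^\rightarrow_x)'(x)$ is the same mechanism the paper uses implicitly. The only difference is that you derive the ``only if'' direction from the same computation, whereas the paper delegates it to Proposition~2.3 of Martin's work on CPP homeomorphisms of $\RRP^1$; your version is self-contained and equally valid, including the polarity remark handling the de Sitter horocycles.
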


\begin{proof}
	The proof of~\cite[Proposition 2.3]{martin2005} can be used here, word by
	word, to prove the ``only if''.  For the ``if'' part, assume $x=0$.  Note
	that $u:=D_0\gamma: \RRP^1 \to \RRP^1$ fixes $0$.  If (the extension of)
	$u$ preserves the horocycles based at $0$, it must be of the form $u(t) =
	t/(ct+1)$.  Hence $u'(0)=1$, therefore $\gamma$ is $C^1$.
\end{proof}

Then $\gamma$ being $C^1$ follows from the following proposition.

\begin{proposition}\label{prop:C1}
	For any $x \in \partial \HHP^2$, $D_x\gamma$ preserves the horocycles based at
	$x$.
\end{proposition}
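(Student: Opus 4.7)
The plan is to reduce the statement, at each break point $v_i$ of $\gamma$, to the fact that the monodromy of the induced HS-structure on $\partial P$ around the puncture $v_i$ is a parabolic element of $\mathrm{Isom}(\HSP^2)$ fixing $v_i$; since parabolics fixing $v_i$ preserve every horocycle based at $v_i$, this yields the proposition. At a point $x$ where $\gamma$ is genuinely projective the left and right germs agree, so $D_x\gamma$ is the identity and there is nothing to prove; hence we may assume $x = v_i$ is a vertex of $P$ on $\partial\HHP^3$. Since $\gamma^\leftarrow_{v_i}$ and $\gamma^\rightarrow_{v_i}$ both send $v_i$ to $\gamma(v_i)$, the shift $D_{v_i}\gamma$ is a projective transformation fixing $v_i$, hence either hyperbolic or parabolic there; preservation of horocycles at $v_i$ is equivalent to the parabolic alternative, so the real task is to rule out the hyperbolic one.

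First I would identify $D_{v_i}\gamma$ with the monodromy of the developing map of the induced HS-structure on $\partial P$ along a small loop encircling $v_i$. By construction, each arc of the boundary of the relevant copy of $\HHP^2$ between two consecutive vertices of $V^\pm$ is glued to the corresponding arc of $\partial\dS^2_\ell$ by the projective identification coming from the unique weakly inscribed face of $P$ whose boundary supplies that arc. Traversing a small loop in $\partial P$ around $v_i$ — from the weakly inscribed face $F^\leftarrow$ on one side to the face $F^\rightarrow$ on the other — the change of chart for the developing map is exactly $(\gamma^\rightarrow_{v_i})^{-1} \circ \gamma^\leftarrow_{v_i} = D_{v_i}\gamma$, so the shift coincides with the vertex monodromy around the puncture $v_i$.

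Then I would compute this monodromy via the vertex figure. Take a horosphere $\mathcal{H}$ in $\HSP^3$ based at $v_i$, obtained from a horosphere in $\HHP^3$ and its polar in $\dSP^3$ matched across $\partial\HHP^3$, and recall that $\mathcal{H}$ carries a natural flat affine structure. A sufficiently small neighborhood of $v_i$ in $\partial P$ meets $\mathcal{H}$ in a closed polygonal link $\ell_{v_i}$, whose edges are geodesic segments in this flat structure, one per face of $P$ incident to $v_i$. Since $\ell_{v_i}$ is a closed $1$-submanifold in a flat affine surface, its holonomy around one full loop is necessarily a translation of $\mathcal{H}$; translations of $\mathcal{H}$ correspond precisely to parabolic isometries of $\HSP^2$ fixing $v_i$, which preserve every horocycle based at $v_i$. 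The main technical obstacle is to make precise the identification between the two parametrizations of $\ell_{v_i}$ — the one coming from the gluing construction of $\gamma$, and the one from the horospherical cross-section — so that the shift really does match the flat holonomy. This is bookkeeping, but requires some care because $\mathcal{H}$ straddles $\HHP^3$ and $\dSP^3$ and its flat structure must be consistently assembled across $\partial\HHP^3$ before the closed-polygon argument applies uniformly.
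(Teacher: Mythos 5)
Your overall skeleton is sound and close in spirit to the paper's argument: the shift is trivial away from the vertices, at a vertex it fixes $v_i$, it agrees with the holonomy of the induced HS structure around the puncture, and the way to pin it down is a horospherical cross-section at $v_i$ (the paper also works with a horosphere of $\HSP^3$ based at $v$). The gap is in the step you dismiss as bookkeeping, which is in fact the whole content of the proposition. The ``HS horosphere'' $\mathcal{H}$ based at $v_i$ is not a single flat affine surface containing a closed polygonal link: its hyperbolic component and its polar de Sitter component are each intrinsically flat, but they are tangent to $\partial\HHP^3$ at $v_i$ and meet only at that ideal point, so there is no flat structure ``consistently assembled across $\partial\HHP^3$''. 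Correspondingly, $\partial P\cap\mathcal{H}$ near $v_i$ is not a closed $1$-submanifold of $\mathcal{H}$: it consists of two arcs, one in the hyperbolic component crossing the faces with red edges at $v_i$, one in the de Sitter component crossing the faces with blue edges, and all four ends of these arcs only converge to the ideal point $v_i$ inside the two transition faces $F^\leftarrow$, $F^\rightarrow$ that cross the quadric. So the ``closed polygon in a flat surface'' premise fails exactly where the proposition has content.

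Moreover, even where the premise holds (the strongly ideal case), the inference ``closed $1$-submanifold in a flat surface $\Rightarrow$ holonomy is a translation'' is not a valid shortcut: the holonomy in question is that of the HS structure on $\partial P$, a priori any element of $\mathrm{Isom}(\HSP^2)$ fixing $v_i$, possibly with a dilation part; what a closed horospherical link actually buys is that the developed link lies on a single horocycle based at $v_i$ and that the return arc lies on the same horocycle, forcing the holonomy to preserve it --- i.e., the parabolicity is the conclusion of the closing-up argument, not a general fact about closed curves in flat surfaces. In the weakly ideal case the link ``closes up'' only through the ideal point $v_i$, where both flat metrics degenerate, so one must instead control how the hyperbolic and de Sitter cross-sections match across $F^\leftarrow$ and $F^\rightarrow$. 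This is precisely what the paper does: it extends $F^\leftarrow$ and $F^\rightarrow$ to a dihedral wedge $\Phi\supset P$, whose boundary is intrinsically a copy of $\HSP^2$; the horosphere based at $v$ cuts $\partial\Phi$ in a horocycle and cuts $\partial P$ in a curve that coincides with it near $v$ (since near $v$ both germs of the gluing are realized on $F^\leftarrow$ and $F^\rightarrow$, where $\partial P=\partial\Phi$), whence $\gamma^\leftarrow_v$ and $\gamma^\rightarrow_v$ send each horocycle based at $v$ to the same curve and $D_v\gamma$ preserves horocycles. Your proof needs this comparison (or an equivalent computation on the two transition faces); without it, the key step is unsupported.
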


\begin{proof}
  As a measure of how much $\gamma$ fails to be projective at $x$, $D_x\gamma$
  must be trivial except at vertices of of $P$.  Let $v \in V^+$ be a vertex of
  $P$.

  We first note that in a neighborhood of $v$, exactly two faces of $\partial
  P$, say $F^\leftarrow$ and $F^\rightarrow$, have non-empty intersections with
  $\partial \HHP^3$.  This can be seen from the vertex figure $P/v$.  Recall
  that vertices (resp.\ edges) of $P/v$ correspond to edges (resp.\ faces) of
  $P$ adjacent to $v$.  In the affine chart of $\RRP^2$ provided by a
  horosphere at $v$, the tangent plane of $\partial \HHP^3$ at $v$ is sent to
  infinity, and intersects $P/v$ in exactly two edges, corresponding to
  $F^\leftarrow$ and $F^\rightarrow$.  In particular, no vertex of $P/v$ is at
  infinity; otherwise such a vertex would correspond to an edge $e$ of $P$
  tangent to the quadratic $\partial \HHP^3$, and the other end of $e$ can not
  lie on the same quadratic.
  	
  Our gluing map $\gamma$ coincides with $\gamma^\leftarrow_v$ on $\partial
  \HSP^3 \cap F^\leftarrow$, and with $\gamma^\rightarrow_v$ on $\partial
  \HSP^3 \cap F^\rightarrow$.  	
  
  $F^\leftarrow$ and $F^\rightarrow$ extend to half-planes bounding a dihedral
  angle $\Phi$ containing $P$.  The boundary of $\Phi$ is isometric to
  $\HSP^2$.  Let $\tilde h$ be a horocycle in $\HSP^3$ based at $v$.  Then the
  intersection $h = \partial \Phi \cap \tilde h$ give a horocycle in $\HSP^2$.
  On the other hand, $h' = \partial P \cap \tilde h$ give a horocycle in
  $\partial P$ based at $v$.  In a neighborhood of $v$, $h'$ lies within
  $F^\leftarrow$ and $F^\rightarrow$.
  
  We then conclude that, for any horocycle $h$ in $\HSP^2$, we have $h' =
  \gamma^\leftarrow_x(h) = \gamma^\rightarrow_x(h)$, i.e.\ $D_x\gamma(h)=h$.
\end{proof}

Proposition~\ref{prop:C1} can be interpreted as horocycles ``closing up'' after
going around a vertex.  In Rivin's characterization of polyhedra strongly
inscribed in the sphere, the same phenomenon is reflected by the shearing
coordinates summing up to $0$ around each vertex.  We can do the same with a
proper definition of shearing coordinates.

Note that three vertices on $\partial \HSP^2$ determine a unique strongly ideal
(hyperbolic) triangle.  Hence for a given ideal HS structure $\delta \in
\cM_{p,q}$, we can replace every triangle in $\delta$, if not already strongly
ideal, by the unique hyperbolic triangle with the same vertices.  The result is
a hyperbolic structure $\eta$ (a triangulation with hyperbolic simplices) of
the $n$-times punctured $\SS^2$ (not embedded).  We define the \emph{HS
shearing} (or simply shearing) along an edge of $\delta$ as the hyperbolic
shearing (see \cite{penner:decorated1, penner:decorated2}) along the
corresponding edge of $\eta$.

Shearing can be easily read from the vertex figures.  First note that the
vertex figure of $\eta$ at a vertex $v$ can be obtained from that of $\delta$
by replacing the segments through infinity by the unique other segments with
the same vertices.  For example, Figure~\ref{fig:hypvertexfig} is obtained from
Figure~\ref{fig:vertexfig}.  Let $e$ be an edge of $P$ adjacent to a vertex
$v$.  The shearing along $e$ then equals to the logarithm of the length ratio
of the segments adjacent to $e$ in the vertex figure of $\eta$ at $v$.

\begin{figure}[hbt]
  \centering
  \includegraphics[width=.3\textwidth]{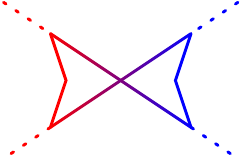}
  \caption{
  	The vertex figure of the hyperbolic structure corresponding to the vertex
  	figure of the HS structure shown in Figure~\ref{fig:vertexfig}.
  	\label{fig:hypvertexfig}
	}
\end{figure}

Horocycles in $\eta$ close up if and only if the hyperbolic shearings sum up
to $0$ over the edges adjacent to $v$.  Then we see from the vertex figure that
the horocycles in $\delta$ also ``close up''.  And by definition, the HS
shearings of $\eta$ must also sum up to $0$.  Different triangulation of
$\delta$ would yield a different hyperbolic metric $\eta$.  But for a fixed
triangulation, it is well-known (see \cite{penner:decorated1,
penner:decorated2}) that the hyperbolic shearing on the edges of $\eta$ provide
a coordinate system for the hyperbolic structure.  Hence the HS shearing on the
edges of $\delta$ provide a coordinate system for the ideal HS metrics.

\medskip

Now back to the proof of necessity.  Proposition~\ref{prop:C1} asserts that the
$D_x\gamma$ are parabolic transformations for every $x \in \RRP^1$.  Consider
the projective transformation $u_x: t \mapsto 1/(t-x)$ sending $x$ to infinity.
Then the conjugate $u_x D_x(\gamma) u_x^{-1}$ is a translation of the form $t
\mapsto t+d_x(\gamma)$.  We have $d_x(\gamma)=0$ at projective points.  At break points:

\begin{lemma} \label{lem:C2}
	$d_x(\gamma)<0$ (resp.\ $>0$) if $x$ is a positive (resp.\ negative) break points.
\end{lemma}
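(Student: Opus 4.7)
The plan is a direct local computation at the break point. After normalizing to $x=0$ by a projective transformation on the source, let $f = \gamma^\leftarrow_0$, $g = \gamma^\rightarrow_0$, and $y = \gamma(0)$; both $f$ and $g$ are Möbius maps with $f(0) = g(0) = y$, and by Lemma~\ref{lem:C1} they share the same first derivative $A := f'(0) = g'(0)$ at $0$. Writing
\[
  f(t) = y + At + Bt^2 + O(t^3), \qquad g(t) = y + At + Ct^2 + O(t^3),
\]
the jump of the second derivative at $0$ is $g''(0) - f''(0) = 2(C-B)$.

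Next I would compute $D_0\gamma = g^{-1}\circ f$ to second order. Inverting $g$ gives $g^{-1}(s) = (s-y)/A - (C/A^3)(s-y)^2 + O((s-y)^3)$, and substituting yields
\[
  D_0\gamma(t) = t + \frac{B-C}{A}\,t^2 + O(t^3),
\]
which in particular confirms that $D_0\gamma$ is parabolic at $0$. Separately, the definition $u_0\,D_0\gamma\,u_0^{-1}(t) = t + d_0(\gamma)$ with $u_0(t) = 1/t$ rearranges to $D_0\gamma(s) = s/(1 + d_0(\gamma)\,s) = s - d_0(\gamma)\,s^2 + O(s^3)$. Matching quadratic coefficients,
\[
  d_0(\gamma) \;=\; \frac{C-B}{A} \;=\; \frac{g''(0) - f''(0)}{2A},
\]
so the sign of $d_x(\gamma)$ is the sign of the second-derivative jump divided by the sign of $A$.

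To conclude, I would observe that the gluing map is locally orientation-reversing, i.e.\ $A < 0$ at every projective point. This follows from the standard fact that when two oriented surfaces are glued along a common boundary the induced boundary orientations are opposite; applied to the reassembly of $\partial P$ from $\HHP^2_\pm$ and $\dS^2_\ell$, the identification $\gamma : \partial\HHP^2 \to \partial\dS^2_\ell$ must reverse the orientations inherited from the two surfaces. With $A<0$ in the formula above, a positive jump forces $d_x(\gamma)<0$ and a negative jump forces $d_x(\gamma)>0$, which is exactly the lemma.

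The Taylor-series portion is routine; the main obstacle is the orientation bookkeeping in the last step. One must pin down the paper's conventions for orienting $\partial\HHP^2$, the boundary components of $\dS^2_\ell$, and $\partial P$, and check that the sign of the jump and the sign of $d_x(\gamma)$ (which depends on the conjugator $u_x$) are measured consistently with those choices, so that $A$ indeed comes out negative.
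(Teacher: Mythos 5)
Your Taylor-series work is correct and in fact reproduces the paper's computation: writing $D_0\gamma(t)=t/(ct+1)$, your matching of quadratic coefficients is exactly the paper's pair of identities $d_0(\gamma)=c$ and $\frac{d^2}{dt^2}\big|_{t=0}D_0\gamma=-2c$, i.e.\ $d_0(\gamma)=(g''(0)-f''(0))/(2A)$. The gap is the last step, which is precisely where the sign asserted in the lemma gets decided. The statement ``$A<0$ at every projective point'' is not well posed until orientations (equivalently, projective charts) on $\partial\HHP^2$ and on the boundary component of $\dS^2_\ell$ are fixed: flipping either orientation flips both the sign of $A$ and the sign of the right-minus-left jump $g''(0)-f''(0)$, whereas $d_x(\gamma)$ is unchanged (the shift $(\gamma^\rightarrow_x)^{-1}\circ\gamma^\leftarrow_x$ never sees the target chart, and conjugating by $t\mapsto -t$ leaves $c$ fixed). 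So under your extrinsic reading of ``positive jump'' the entire content of the lemma sits in the orientation bookkeeping, and your write-up explicitly defers it (``one must pin down the paper's conventions \dots and check \dots''). This is not a vacuous worry: in the target chart obtained by pulling back the source coordinate through the germ $\gamma^\rightarrow_0$ (so that $A=1$), the right-minus-left jump equals $+2\,d_0(\gamma)$, which would yield the opposite sign to the one stated; your boundary-orientation idea (the two pieces of the oriented sphere $\partial P$ induce opposite orientations on the common circle, so $\gamma$ reverses them) is the right way to close this route, but the proof stops short of the one verification that matters.

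For comparison, the paper's proof avoids the issue altogether: it measures the failure of projectivity intrinsically through the shift $D_x\gamma$, which by Lemma~\ref{lem:C1} and Proposition~\ref{prop:C1} is $C^1$-tangent to the identity, and in effect reads ``the jump in the second derivative'' as $\frac{d^2}{dt^2}\big|_{t=x}D_x\gamma$. With that convention the lemma is exactly the two-line computation you performed first ($d_0(\gamma)=c$ versus $(D_0\gamma)''(0)=-2c$), with no reference to $\gamma'(x)$ and no orientation choices; this convention is also the one used downstream, since positivity of break points is later invoked only through Proposition~\ref{prop:C2} and Remark~\ref{rem:martin}. If you want to keep your extrinsic formulation, you must fix the orientations of $\partial\HHP^2$ and of $\partial\dS^2_\ell$ (say, as boundary orientations induced from the corresponding pieces of the oriented surface $\partial P$), prove that $\gamma$ reverses them, and check that the ``jump'' in the definition of a positive break point is measured in those oriented charts — only then does $A<0$ become a theorem rather than a convention, and only then does your formula give $d_x(\gamma)<0$.
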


\begin{proof}
	We may assume $x=0$, then $u_0=1/t$.  We already figured that $D_0(\gamma)$
	is of the form $D_0(\gamma): t \mapsto t/(ct+1)$.  So the conjugate $u_0
	D_0(\gamma) u_0^{-1}$ has the form $t \mapsto t+c$, i.e.\ $d_0(\gamma)=c$.
	On the other hand, an elementary computation shows that the second derivative
	$\frac{d^2}{d t^2}|_{t=0} D_0(\gamma)=-2c$.
\end{proof}

In the half-space model of $\HHP^2$, let $h$ be a horocycle based at $x$.  Then
$d_x(\gamma)<0$ (resp.\ $>0$) if and only if $D_x\gamma$ moves points on $h$ in
the clockwise (resp.\ anti-clockwise) direction.  We are now ready to prove
that every break point of $\gamma$ is positive.

\begin{proposition}\label{prop:C2}
	$d_v(\gamma) < 0$ at every vertex $v$ of $P$.
\end{proposition}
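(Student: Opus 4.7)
The plan is to leverage the convexity of $P$ inside the dihedral angle $\Phi$ from the proof of Proposition~\ref{prop:C1}, where $F^\leftarrow$ and $F^\rightarrow$ extend to half-planes with $\partial\Phi\cong\HSP^2$ and $P\subset\Phi$. It suffices to prove $d_v(\gamma)<0$ for a fixed $v\in V^+$; the case $v\in V^-$ will be symmetric.

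First, I would pick a horosphere $\tilde h\subset\HSP^3$ based at $v$ (combining a horosphere in $\HHP^3$ with its polar in $\dSP^3$, joined along $\partial\HHP^3$), and consider the horocycle $h:=\tilde h\cap\partial\Phi$ in $\HSP^2$ at $v$ together with the curve $h':=\tilde h\cap\partial P$. Since $\partial P$ and $\partial\Phi$ coincide on $F^\leftarrow\cup F^\rightarrow$ near $v$, the two curves $h$ and $h'$ agree in this neighborhood. But all other faces of $P$ adjacent to $v$ lie strictly inside $\Phi$, so away from $v$ the curve $h'$ detaches from $\partial\Phi$ and enters $\mathrm{int}(\Phi)$.

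Next, I would interpret $h'$ as a loop realizing the holonomy of the HS-structure on $\partial P$ around $v$, which is precisely $D_v\gamma$. Developing $h'$ into $\HSP^2$ via the local HS-charts of $\partial P$, the developed curve begins at a point $p\in h$ on the $F^\leftarrow$-side, stays off $h$ on the side of $h$ corresponding to $\mathrm{int}(\Phi)$, and returns to $D_v\gamma(p)\in h$ on the $F^\rightarrow$-side. Thus the direction in which the parabolic $D_v\gamma$ translates $p$ along $h$ is dictated by which side of $h$ in $\HSP^2$ contains $\mathrm{int}(\Phi)$, a purely convexity-driven datum.

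The main obstacle is the final sign check: converting this geometric direction into $d_v(\gamma)<0$ via Lemma~\ref{lem:C2}. This step requires a careful bookkeeping of orientations---the upper half-plane model convention used in Lemma~\ref{lem:C2}, the orientation of $\gamma$ induced by the cyclic labeling $1^+,\dots,p^+$ of $V^+$, and the outward-normal orientation of $\partial P$ inherited from that of $P\subset\RRP^3$---so that they combine to produce the negative sign uniformly across both cases $2\le p\le q$ and $p<2<q$ (the latter covering the degenerate $1$-cycle on the $V^+$ side).
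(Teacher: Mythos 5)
Your setup (the wedge $\Phi$ bounded by the extensions of $F^\leftarrow$ and $F^\rightarrow$, the horosphere $\tilde h$ based at $v$, the curves $h=\tilde h\cap\partial\Phi$ and $h'=\tilde h\cap\partial P$) is the same as the paper's, but the step that is supposed to produce the sign is not valid, and the problem is not the orientation bookkeeping you defer at the end. First, the description of the developed picture is inaccurate: each face of $P$ adjacent to $v$ meets $\tilde h$ in an arc of a horocycle based at $v$ for the intrinsic geometry of that face, and consecutive arcs share an endpoint; since two horocycles based at the same ideal point are either equal or disjoint, the development of $h'$ into $\HSP^2$ lies entirely \emph{on} a single horocycle based at $v$ (indeed on $h$ itself, if the initial chart is chosen so that $F^\leftarrow$ develops into $\partial\Phi$). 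The statement that $h'$ detaches from $\partial\Phi$ and enters the interior of $\Phi$ is an extrinsic statement in $\HSP^3$ and does not survive the intrinsic development. Second, and more seriously, even if one grants a curve from $p$ to $D_v\gamma(p)$ lying on one side of $h$, this would not determine the direction in which the parabolic $D_v\gamma$ translates $p$: in the half-plane model with $v=\infty$, a path from $(0,1)$ to $(c,1)$ can stay above the line $y=1$ for either sign of $c$. So convexity enters your argument only through a criterion that is incapable of detecting the sign, which is exactly the content of the proposition.

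What is missing is a quantitative comparison, and this is how the paper argues: $P$ is recovered from $\Phi$ by successively truncating with planes through $v$, and each truncation replaces a segment of the horocyclic cross-section by a strictly \emph{shorter} one (Figure~\ref{fig:truncate}); since $d_v(\gamma)$ equals, up to a positive scaling, the change of length of a well-chosen horocyclic segment (Remark~\ref{rem:martin}, citing Martin's Lemma~6.5), or equivalently since this shortening forces $D_v\gamma=(\gamma^\rightarrow_v)^{-1}\circ\gamma^\leftarrow_v$ to move points of $h$ in the clockwise direction, one concludes $d_v(\gamma)<0$ via the criterion preceding the proposition. To repair your argument you would need to replace the ``which side of $h$ contains $\mathrm{int}(\Phi)$'' criterion by such a length (or monotonicity) comparison between $h'$ and the corresponding arc of $h$; once that is in place, the reduction to Lemma~\ref{lem:C2} is indeed only bookkeeping.
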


\begin{proof}
	We keep the definitions and notations in the proof of
	Proposition~\ref{prop:C1}.  We can recover $P$ by truncating the dihedral
	angle $\Phi$ with planes through $v$.  From the vertex figure, we observe the
	effect of a truncation on a horocycle $h$ based at $v$:  it replaces a
	segment of $h$ with a shorter one. See Figure~\ref{fig:truncate}.  Hence
	$D_v\gamma = (\gamma^\rightarrow_v)^{-1} \circ \gamma^\leftarrow_v$ moves
	points on $h$ in the clockwise direction, i.e.\ $d_v(\gamma)<0$.
\end{proof}

\begin{figure}[hbt] 
  \centering 
  \includegraphics[width=.3\textwidth]{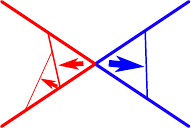}
 	\caption{
 		Each truncation replaces a horocyclic segment with a shorter
 		one.  \label{fig:truncate}
	}
\end{figure}

\begin{remark}\label{rem:martin}
  Lemma~6.5 of~\cite{martin2005} asserts that $d_x(\gamma)$ equals the change
  of length of a well chosen segment of horocycle based at $x$.  Up to a
  scaling, this also suffices for us to conclude that $d_v(\gamma)<0$.
\end{remark}

% If $v=0$ is a vertex of $P$, then a routine calculation shows that $D_v\gamma$
% is of the form $t/(ct+1)$ with $c<0$, and the second derivative has a jump of
% $-2c>0$ at $0$.  Indeed, a truncation removes a segment of horocycle based at
% $0$, which can only be fixed by pulling the horocycle on the right side away
% from the base point, if the left side is fixed.

% Proposition~\ref{prop:C2} reflects the convexity at the vertices of $P$.  To
% see this, let's unfold $\partial P$ in $\HS^2$, see Figure~\ref{fig:}.  As in
% the Euclidean case, there will be an ``angle defect''.  To compensate the
% defect, we need a map that glues the blue segment to the red segment.  This has
% to be a contracting map, hence the jump in the second derivative.

\section{Properness}\label{sec:properness}
The following theorem states that the maps $\Theta$ and $\Delta$ are proper.

\begin{theorem}
	Consider a sequence of polyhedra $(P_k)_{k \in \NN}$ diverging in
	$\cP_{p,q}$, then $\theta_k=\Theta(P_k)$ diverges in $\cA_{p,q}$ and
	$\delta_k = \Delta(P_k)$ diverges in $\cM_{p,q}$.
\end{theorem}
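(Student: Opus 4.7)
The plan is to argue by contrapositive for each map separately. Suppose $\theta_k = \Theta(P_k)$ converges to some $\theta_\infty \in \cA_{p,q}$ (the argument for $\Delta$ is parallel); the goal is to extract a convergent subsequence of $(P_k)$ in $\cP_{p,q}$, contradicting divergence. After normalizing each $P_k$ by an isometry of $\HHP^3$---say, by placing three marked vertices from $V^+$ in a fixed standard configuration on $\partial\HHP^3_+$---and passing to a subsequence, every vertex of $P_k$ converges in $\RRP^3$ and the polyhedra themselves converge in Hausdorff distance to a compact convex set $P_\infty \subset \RRP^3$ whose extreme points lie on the quadric $\partial\HHP^3$.

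If $P_\infty \in \cP_{p,q}$, the proof is complete; otherwise the degeneration falls into one of three forms: (i) two distinct vertices of $P_k$ collide in the limit, or a vertex escapes the affine chart and reaches the light cone where the two sheets of $\partial\HHP^3$ meet (reducing projectively to a collision); (ii) the convex hull of $P_\infty$ collapses to a subset of a plane or line; (iii) in the case $2 \le p \le q$, the blue-angle sum $\sum_{\text{blue}} \theta_k$ tends to $-2\pi$, equivalently the length $\ell_k$ of the unique simple closed space-like geodesic on $\partial P_k^* \cap \dSP^3$ tends to $2\pi$.

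Each case is then ruled out by reading off a contradiction with the assumed limit. In (i), the dihedral angle along the edge joining the colliding pair tends to $0$ or $\pm\pi$, violating the strict open ranges in~\ref{con:range}; under $\Delta$ the same collision merges two positive break points of the CPP gluing map, destroying the prescribed break-point count built into $\cM_{p,q}$. In (ii), some face-triangle of $\partial P_k$ degenerates, producing limit angles outside $(0,\pi)$ or $(-\pi,0)$, and simultaneously destroys the completeness or maximality of the HS structure $\delta_\infty$. Case (iii) contradicts the strict inequality in~\ref{con:negsum} (valid precisely when $2 \le p \le q$) for the angle map, and for the metric map it contradicts the length bound $\ell<2\pi$ built into the definition of $\dS^2_\ell$ and hence of $\cM_{p,q}$.

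The main obstacle is case~(iii), which is global in nature and has no analogue in the strongly inscribed theory of~\cite{hodgson1992, rivin1994}. Its treatment rests on the polar correspondence between blue exterior angles and the closed space-like ${\mathcal T}$-geodesic $\gamma$ on $P^*$ set up in Section~\ref{sec:extrinsic}, together with the strict length bound $\ell<2\pi$ from~\cite[Theorem~1.5]{schlenker2001}. This polarity is precisely what ties the combinatorial-analytic condition~\ref{con:negsum} to the geometric non-collapse of the de Sitter annulus in $\partial P$, allowing the contrapositive to close and establishing properness of both $\Theta$ and $\Delta$ at once.
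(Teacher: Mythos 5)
Your overall strategy (normalize by an isometry fixing three vertices, extract a subsequence so that the vertices converge on the compact quadric, and show that any degeneration of the limit is incompatible with convergence of the angles or of the HS structure) is the same as the paper's, but your enumeration of the possible degenerations has a genuine gap. You claim that if $P_\infty\notin\cP_{p,q}$ then either (i) vertices collide, (ii) the hull collapses to a plane or line, or (iii) the blue-angle sum tends to $-2\pi$. This trichotomy is not exhaustive: a vertex $v$ of $P_k$ can converge to a point lying in the relative interior of a facet of the convex hull of the remaining limit vertices, with all limit vertices distinct and the limit polytope still three-dimensional. This is exactly the degeneration that cannot happen for strongly inscribed polyhedra but does happen in the weakly inscribed setting (a face plane can meet the quadric in a conic that appears as a hyperbola in the chart, so an ideal point may lie inside an ideal polygon); the paper isolates it in a separate proposition and even remarks that it is the new case compared with~\cite{danciger2014}. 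In that situation none of your three cases applies, yet the limit is not in $\cP_{p,q}$, so your contrapositive does not close. The fix is the paper's observation that then every face of $P_\infty$ through $v$ lies in the unique supporting plane at $v$, so all dihedral angles at edges incident to $v$ tend to $0$ (contradicting the strict bounds in~\ref{con:range}) and $v$ ceases to be a break point of the gluing map (contradicting the break-point count defining $\cM_{p,q}$).

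Two secondary points. First, in your collision case the claim that ``the dihedral angle along the edge joining the colliding pair tends to $0$ or $\pm\pi$'' is only justified when exactly two vertices collide and the third vertices of the two faces adjacent to that edge stay away from the collision point; for a colliding cluster of three or more vertices the limiting face planes along an interior edge of the cluster are not forced to become tangent, and the single-edge argument breaks down. The paper avoids this by a summation argument: using~\ref{con:vertex} it shows that the limit weights on all edges inside the colliding set $S$ must sum to zero, hence (being red, nonnegative) vanish identically, contradicting admissibility of the assumed limit; and when $S$ is an entire sheet $V^\pm$ it derives that the blue sum equals $-2\pi$, contradicting~\ref{con:negsum}. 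Second, your case (iii) is not an independent degeneration of the polyhedra: if no collision or flattening occurs, the limit is again a $(p,q)$-ideal polyhedron and its blue sum is automatically strictly greater than $-2\pi$; the blue sum tends to $-2\pi$ only as a by-product of a whole sheet collapsing, which is how the paper treats it inside the collision case. So the polarity/length-bound machinery you invoke for (iii) is not needed as a separate step, while the genuinely missing step is the vertex-into-facet case described above.
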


Up to hyperbolic isometries, we may fix three vertices for every polyhedron in
$(P_k)$.  As $\partial \HHP^3$ is compact, we may assume that vertices of $P_k$
have well defined limits by taking a subsequence.  But the limit of $P_k$,
denoted by $P_\infty$, is not a $(p,q)$-ideal polyhedron, since the sequence is
diverging.

Hence in the limit, $P_\infty$ must fail to be \emph{strictly} convex at some
vertex $v$.  Let $P'_\infty$ be the convex hull of all the other vertices.
There are three possibilities, namely that $v$ is in the relative interior of a
vertex, an edge or a facet of $P'_\infty$.  But every straight line intersect a
quadratic surface in at most two points, hence an ideal vertex can not be in
the relative interior of an edge.  Thus we only need to consider the remaining
two possibilities.

\begin{remark}
	For strongly ideal polyhedra, an ideal vertex can not lie in the interior of
	a facet.  Hence there is only one possibility to consider.
	See~\cite{danciger2014}.
\end{remark}

\begin{proposition}
	If some vertices of $(P_k)_{k \in \NN}$ converge to the same vertex of
	$P_\infty$, then the limit graph $\theta_\infty$ violates
	Condition~\ref{con:vertex} or~\ref{con:negsum}, and some break points
	in~$\delta_k$ merge into a single break point in the limit metric
	$\delta_\infty$.
\end{proposition}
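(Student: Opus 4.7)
The plan is to set up compactness, derive the metric statement immediately, and then reduce the angle statement to a careful vertex-figure analysis at the collision point. By compactness of the box $[-\pi,\pi]^{\binom{n}{2}}$ of dihedral angle assignments, and of the appropriate moduli space of normalized HS-structures on the $n$-punctured sphere, after passing to a subsequence we may assume $\theta_k\to\theta_\infty$ in $\RR^{\binom{n}{2}}$ and $\delta_k\to\delta_\infty$. Let $U\subset V$ denote a cluster of labeled vertices, of size $m\geq 2$, whose geometric positions all converge to a common limit $v_\infty\in\partial\HHP^3$; for definiteness take $v_\infty\in\partial\HHP^3_+$.

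For the metric part, the break points of the top gluing map $\gamma^k_+$ are in canonical bijection with the vertices of $P_k$ on $\partial\HHP^3_+$, each carrying a strictly negative shift by Proposition~\ref{prop:C2}. Since every $u\in U$ converges to $v_\infty$, the corresponding points on the source circle $\partial\HHP^2_+$ all converge to the same location; in the limiting CPP map $\gamma^\infty_+$ the $m$ break points therefore coalesce into a single break point whose shift is the composition of the individual shifts. Consequently $\delta_\infty$ has at most $p-m+1<p$ break points on its top boundary, which is the merging assertion and also forces $\delta_\infty\notin\cM_{p,q}$.

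For the angle part, I would analyze the vertex figure $P_k/u$ for $u\in U$ in the affine chart provided by a horosphere at $u$. This is a polygon whose signed turning angles are the weights $\theta_k(u,\cdot)$ and whose total signed turning is $0$ (or $-2\pi$ if $u$ is a $1$-cycle vertex). As $u\to v_\infty$, the sides of $P_k/u$ corresponding to cluster-internal edges of $U$ are the ones drifting through infinity in the horospherical chart, while the sides corresponding to external edges converge to the portion of the vertex figure of $P_\infty$ at $v_\infty$ seen from $u$. Comparing the resulting limit polygon with the identity that would hold at $v_\infty$ in $P_\infty$, one sees that the signed turning has been reorganized along the sequence by a nontrivial multiple of $2\pi$. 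In the case $2\leq p\leq q$ this manifests dually as the sum of $\theta_\infty$ over blue edges dropping down to the forbidden value $-2\pi$, the limiting length of the polar space-like polygon $\gamma\subset P_k^*$ of \cite[Theorem 1.5]{schlenker2001}; that contradicts the strict inequality in Condition~\ref{con:negsum}. In the case $p<2<q$, where Condition~\ref{con:negsum} already holds with equality for each $k$, the same $2\pi$-rearrangement instead forces $\theta_{\infty,u}$ at some $u\in U$ to jump from its prescribed value by $\pm 2\pi$, violating Condition~\ref{con:vertex}.

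The main obstacle lies in the third step: since each individual angle $\theta_k(u,v)$ converges continuously, the failure of Condition~\ref{con:vertex} or Condition~\ref{con:negsum} in the limit is a global effect, arising from a $2\pi$-jump in the winding of some degenerating vertex figure. Pinpointing the exact vertex (or the exact blue-edge total) where this jump appears requires tracking which polygonal sides of $P_k/u$ escape to infinity in the horospherical chart as $u$ collides with the other members of $U$; the coalescence of $m$ positive break points on the metric side serves as a useful combinatorial guide, since it is the projective-polar shadow of the very rearrangement that forces the angle failure.
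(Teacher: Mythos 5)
Your metric half is fine and coincides with the paper's (one-line) argument: vertices of $P_k$ correspond to break points of $\delta_k$, so a cluster of colliding vertices forces break points to coalesce. The problem is the angle half, which is both incomplete (you yourself flag the ``main obstacle'' of tracking sides escaping to infinity, and never resolve it) and aimed at conclusions that are false in general. First, since each $\theta_k(u,v)$ lies in $(-\pi,\pi)$ and converges componentwise, every vertex sum of $\theta_\infty$ is the limit of the corresponding vertex sums of $\theta_k$, hence equals $0$ (or $-2\pi$ at the apex). So Condition~\ref{con:vertex} can \emph{never} fail in the limit, and your proposed ``$2\pi$-jump'' of some $\theta_{\infty,u}$ in the case $p<2<q$ cannot occur; a change of winding of the degenerating vertex figure does not alter a finite sum of convergent terms. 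Second, in the case $2\le p\le q$, if only a \emph{proper} subset $S\subsetneq V^+$ merges, the blue sum of $\theta_\infty$ equals the blue sum of the dihedral angles of the limit polyhedron $P_\infty$, which still has at least two vertices on each sheet, so that sum stays strictly above $-2\pi$: Condition~\ref{con:negsum} is not violated there either. Your dichotomy therefore misses exactly the generic degeneration.

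What actually breaks in the proper-subset case is the strict positivity on the red edges inside the cluster (Condition~\ref{con:range} together with the cycle cover~\ref{con:2cycles}), and the paper gets this by pure bookkeeping rather than by chasing vertex figures of $P_k$: applying the vertex condition to $P_\infty$ at the merged vertex gives $\sum_{u\notin S,\,v\in S}\theta_\infty(u,v)=0$, while Condition~\ref{con:vertex} for $\theta_\infty$ (which, as noted, survives the limit) summed over $v\in S$ gives $\sum_{u\notin S,\,v\in S}\theta_\infty(u,v)+2\sum_{u,v\in S}\theta_\infty(u,v)=0$; hence $\sum_{u,v\in S}\theta_\infty(u,v)=0$, and since these are limits of positive red weights they all vanish, contradicting admissibility of $\theta_\infty$ because the vertices of $S$ are consecutive on a cycle. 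Only when $S$ is an entire side $V^\pm$ (so $p\ge 2$) does the blue sum drop to $-2\pi$ and Condition~\ref{con:negsum} fail, which is the special case your sketch captures. To repair your proof you would need to replace the vertex-figure/winding heuristic by this (or an equivalent) comparison of the limit identities, distinguishing $S=V^\pm$ from $S\subsetneq V^\pm$.
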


\begin{proof}
	The divergence of the induced metrics follows immediately from the
	correspondence between vertices of $P$ and break points in $\Delta(P)$.
	Hence we will focus on the divergence of the admissible graphs.

	Note that the ideal boundary of $\HHP^3$ can be seen as two copies of $\HHP^2$
	identified along their ideal boundaries.  With our choice of affine chart for
	the projective model, the two copies of $\HHP^2$ appear as the two sheets of
	the hyperboloid $x_0^2+x_1^2-x_2^2 = -1$.

	The vertex set of each $P_k \in \cP_{p,q}$ then corresponds to two point sets
	$V_k^+$ and $V_k^-$ in $\HHP^2$ of cardinality $p$ and $q$ respectively.
	They converge to two point sets $V^+_\infty$ and $V^-_\infty$ of cardinality
	$p'$ and $q'$ respectively, corresponding to the vertices of $P_\infty$.  If
	some vertices of $(P_k)$ converges to the same vertex of $P_\infty$, we must
	have $p'+q' < p+q$.  Up to hyperbolic isometries, we may assume three fixed
	vertices shared by all $P_k$, hence $3 \le p'+q'$.

	Consequently, the graph $\Theta(P_\infty)$ has at least three vertices, but
	stricly less vertices than $\theta_\infty$.  In fact, it is obtained by
	contracting vertices of $\theta_\infty$.  Recall that $\theta_k=\Theta(P_k)$
	and their limit $\theta_\infty$ are vectors of dimension $\binom{|V|}{2} =
	\binom{p+q}{2}$.

	Assume that a set of vertices $S \subset V$ is merged into a vertex of
	$P_\infty$.  If $S \ne V^+$ or $V^-$, we have on the one hand
	\[
		\sum_{u \notin S, v \in S} \theta_\infty(u,v) = 0.
	\]
	On the other hand, as the limit of $\theta_k$, Condition~\ref{con:vertex}
	asserts that
	\[
		\sum_{u \in V} \theta_\infty(u,v) = 0
	\]
	for any $v \in S$.  Comparing the two sums, we conclude that
	\[
		\sum_{u,v \in S} \theta_\infty(u,v) = 0.
	\]
	Now assume that $\theta_\infty$ is $(p,q)$-admissible.  Since $S$ are
	vertices on the same polar circle, $\theta_\infty(u,v)$ is non-negative,
	hence must be $0$, if $u,v \in S$.  In other words, $S$ induce an empty
	graph, contradicting the fact that vertices in $S$ are consecutive in a
	cycle.

	If $S = V^+$ or $V^-$, we must have $p>1$.  But it is easy to conclude that
	the sum over negative weights in $\theta_\infty$ is $-2\pi$, contradicting
	Condition~\ref{con:negsum}.
\end{proof}

\begin{proposition}
	If a vertex $v$ of $P_k$ converges to a vertex of $P_\infty$ that is contained
	in a unique supporting plane, then $v$ is an isolated vertex in the limit
	graph $\theta_\infty$, and $v$ is not a break point in the HS structure
	induced by $\delta_\infty$.
\end{proposition}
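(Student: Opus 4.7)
The hypothesis that $v_\infty := \lim v$ lies on a unique supporting plane of $P_\infty$ means exactly that $v_\infty$ is contained in the relative interior of some facet $F_\infty$ of $P_\infty$; let $\Pi$ denote that supporting plane, which contains $F_\infty$. The plan is to show, in order, that (i) every facet of $P_k$ incident to $v$ has supporting plane converging to $\Pi$, (ii) hence the weights on edges adjacent to $v$ tend to $0$, and (iii) the horocyclic shift at $v$ also tends to $0$, so $v_\infty$ is not a break point of $\delta_\infty$.

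\emph{Step 1.} Any subsequential limit of a supporting plane of $P_k$ at $v$ is a supporting hyperplane of $P_\infty$ at $v_\infty$, and by hypothesis $\Pi$ is the only one. Consequently, all facets of $P_k$ through $v$ have supporting planes converging to $\Pi$, and any two such facets meeting along an edge through $v$ become asymptotically coplanar as $k \to \infty$.

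\emph{Step 2.} The HS exterior dihedral angle along an edge is a continuous function of the pair of supporting planes of its two incident facets, equally in $\HHP^3$ and in $\dSP^3$. Coplanarity in the limit therefore forces $\theta_k(u,v) \to 0$ for every neighbor $u$ of $v$. Hence in $\theta_\infty$ the vertex $v$ carries zero weight on each of its incident edges, so it is isolated.

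\emph{Step 3.} For the metric statement I would follow the setup of the proof of Proposition~\ref{prop:C2}: near $v$ one recovers $P_k$ from the dihedral wedge $\Phi_k$ formed by the two facets meeting $\partial \HHP^3$ by truncating with the remaining facets through $v$, each truncation replacing a segment of a fixed horocycle based at $v$ by a strictly shorter one. By Remark~\ref{rem:martin}, the cumulative length change equals $d_v(\gamma_k)$ up to a positive scale. Choosing horocycles at $v_k$ that converge to a horocycle at $v_\infty$, and using that each truncating plane converges to $\Pi$, the truncations become trivial in the limit and the length change tends to $0$. Hence $d_{v_\infty}(\gamma_\infty) = 0$, the shift $D_{v_\infty}\gamma_\infty$ is trivial, and by Lemma~\ref{lem:C1} the limit gluing map $\gamma_\infty$ is projective at $v_\infty$; that is, $v_\infty$ is not a break point of $\delta_\infty$.

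The main obstacle is Step 3: one needs a uniform choice of horocycles at $v_k$ converging to a horocycle at $v_\infty$ so that Remark~\ref{rem:martin} can be applied with a consistent normalization across the sequence. Since all the data involved—vertex positions, facet supporting planes, horocycles tangent to $\partial \HHP^3$—live in the fixed projective space $\RRP^3$ and converge there, such a choice can be made, but the bookkeeping of scales under the degeneration is the step that requires the most care.
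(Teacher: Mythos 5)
Your Steps 1--2 are fine and essentially the paper's own argument for the angle statement: the hypothesis forces every face of $P_\infty$ adjacent to $v_\infty$ to lie in the unique supporting plane $\Pi$ (any other face plane would be a second supporting plane through $v_\infty$), so the limiting dihedral angles on all edges incident to $v$ vanish and $v$ is isolated in $\theta_\infty$.

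The gap is in Step 3, and you have named it yourself. Your plan is to prove $d_{v_\infty}(\gamma_\infty)=0$ by showing $d_v(\gamma_k)\to 0$ via the truncation picture of Proposition~\ref{prop:C2} and Remark~\ref{rem:martin}. This requires two things that are asserted but not established: (a) a consistent normalization of horocycles at $v_k$ under which the ``length change'' of Remark~\ref{rem:martin} can be compared along the sequence, and (b) the continuity of the shift under the degeneration, i.e.\ that the shift of the limit gluing map equals the limit of the shifts even as the faces through $v$ collapse onto a common plane and the wedge $\Phi_k$ flattens. Neither is automatic, and the quantitative bookkeeping you defer is precisely the content that is missing. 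None of it is needed: argue directly on the limit object, as the paper does. Since $v_\infty$ lies in the relative interior of a face of $P_\infty$ (a weakly ideal polygon in the plane $\Pi$), near $v_\infty$ the hyperbolic and de Sitter parts of $\partial P_\infty$ are identified inside that single plane, so the gluing map of the induced HS structure $\delta_\infty$ coincides near $v_\infty$ with the restriction of one projective map; its shift at $v_\infty$ is trivial and, by Lemma~\ref{lem:C1}, $v_\infty$ is not a break point. This soft argument replaces your entire Step 3 and removes the need for any limiting analysis of $d_v(\gamma_k)$.
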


\begin{proof}
	Under the assumption of the proposition, every face of $P_\infty$ adjacent to
	$v$ must lie in this unique supporting plane.  Otherwise, the supporting
	plane of the face would be another supporting plane containing $v$.
	Then the dihedral angles vanish on all the edges incident to $v$.  Since $v$
	is in the interior of a weakly ideal HS triangle, the gluing map is
	projective at $v$.
\end{proof}

A special case is of particular importance for us:  If $\partial P_k$ converges
to a double cover of a plane, then the limit polyhedron $P_\infty$ is equal to
$\RRP^3$.  In this case, every vertex is ``flat'': $\theta_\infty$ is
identically $0$ (empty graph), and $\delta_\infty$ is the double cover of
$\HSP^2$.  We call this polyhedron a \emph{flat polyhedron}.

\begin{proposition}
	If two faces of $P_\infty$ intersect in their relative interiors and span a
	plane, then $P_\infty$ is flat.
\end{proposition}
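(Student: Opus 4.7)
The plan is to show that the hypotheses force the polyhedra $P_k$ to be sandwiched in a slab of vanishing width for large $k$, so that $\partial P_k$ converges to a double cover of a planar region. Let $F_{1,k}$ and $F_{2,k}$ be the faces of $P_k$ converging to $F_1$ and $F_2$, with supporting affine planes $H_{1,k}, H_{2,k}$; both sequences of planes converge to the common plane $H$ spanned by $F_1$ and $F_2$. I would first fix an affine chart in which $H$ and (a tail of) the $P_k$ sit in a bounded region, so that ``above'' and ``below'' $H$ are well defined, together with a perpendicular projection $\pi$ onto $H$.

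The key step is to show that for large $k$ the two faces bound $P_k$ from opposite sides, so that $P_k$ lies in the slab between $H_{1,k}$ and $H_{2,k}$. Pick $p$ in the relative interior of $F_1 \cap F_2$; then for large $k$ there are points $p_{i,k} \in F_{i,k}$ with $p_{i,k} \to p$, and the projections $\pi(F_{i,k})$ onto $H$ have overlapping interiors. Over the interior of $\pi(P_k) \subset H$, the boundary $\partial P_k$ decomposes into a unique upper graph and a unique lower graph. If both $F_{1,k}$ and $F_{2,k}$ contributed to the upper graph, their upper parts would coincide over the open overlap of their projections, forcing two distinct faces of the convex polyhedron $P_k$ to share relative interior points---which is impossible since distinct faces meet in a face of strictly lower dimension. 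Hence one face bounds $P_k$ from above and the other from below, and convexity of $P_k$ then forces it into the slab between $H_{1,k}$ and $H_{2,k}$.

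Since $H_{1,k}, H_{2,k} \to H$, this slab Hausdorff-converges to $H$, so $P_k$ converges to a subset of $H$ and $\partial P_k$ converges to a double cover of a planar region. By the definition of flat polyhedron recalled immediately before the proposition, this means $P_\infty$ is flat. The main obstacle is the opposite-sides step: one must make precise use of the hypothesis that $F_1$ and $F_2$ share \emph{relative interior} points (not merely boundary points) in order to exclude the same-side configuration, and verify carefully that the upper/lower decomposition of the boundary of a convex polyhedron is unique above the interior of its projection. A secondary subtlety is choosing the affine chart so that both plane sequences and the polyhedra remain in a bounded region simultaneously, which is possible thanks to the hyperbolic-isometry normalization fixing three vertices at the start of Section~\ref{sec:properness}.
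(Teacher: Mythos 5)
Your route is genuinely different from the paper's: the paper never goes back to the sequence, but argues directly on $P_\infty$ that any plane avoiding its interior must contain both faces (a supporting plane through a common relative-interior point contains each face), hence equals the plane $H$ they span, so every face of $P_\infty$ lies in $H$ and $P_\infty$ is flat. Your sandwich argument can be made to work in the main case, but as written it has a genuine gap at its pivotal step. The claim that $\pi(F_{1,k})$ and $\pi(F_{2,k})$ have overlapping interiors for large $k$ does not follow from the existence of points $p_{i,k}\to p$: two \emph{adjacent} upper facets of $P_k$ also contain points converging to a common point while their shadows have disjoint interiors, so this assertion is essentially equivalent to the opposite-sides conclusion you are trying to reach. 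The correct way to use the relative-interior hypothesis is the contrapositive separation argument, which is absent from the proposal: if $F_{1,k}$ and $F_{2,k}$ both lay on the upper side, their shadows would be convex sets in $H$ with disjoint interiors, hence separated by a line $L_k\subset H$; a subsequential limit $L$ of these lines would weakly separate $F_1$ from $F_2$, so the common point $p$ lies on $L$, and since $p\in\mathrm{relint}\,F_i$ while $F_i$ lies in one closed half-plane of $L$, both $F_1$ and $F_2$ would be contained in $L$, contradicting that they span a plane.

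A second gap is dimensional: your construction silently assumes both $F_i$ are facets. Only then do the faces $F_{i,k}$ converging to $F_i$ have affine spans $H_{i,k}$ that must converge to $H$, and only then do the shadows have nonempty interior; but the statement also covers two edges of $P_\infty$ crossing at relative-interior points (or an edge crossing a facet), and for these the slab construction never gets started (an edge of $P_k$ determines no plane tending to $H$, and its shadow has empty interior). You also take for granted that each $F_i$ is the Hausdorff limit of a single face of $P_k$ with a well-defined supporting plane; this is harmless if ``faces of $P_\infty$'' are understood as such limits, but it should be stated, since a face of the limit body need not be the limit of a single face of $P_k$. The paper's one-line argument avoids all of this and treats faces of every dimension uniformly; incidentally, once the opposite-sides step is fixed, your boundedness worry is unnecessary, since $P_k$ lying in the thin wedge between $H_{1,k}$ and $H_{2,k}$ already forces every point off $H$ to leave $P_k$ eventually.
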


\begin{proof}
	Under the assumption of the proposition, the only plane that avoids the
	interior of $P_\infty$ is the plane spanned by the two intersecting faces.
	Hence every face admits this plane as the supporting plane.  In other words,
	every face lies in this plane.
\end{proof}

\section{Rigidity}\label{sec:rigidity}
\subsection{The infinitesimal Pogorelov map}

We recall here the definition of the infinitesimal Pogorelov map, as well as
its key properties. We refer to \cite{schlenker1998} for the proofs, see in
particular D\'efinition 5.6 and Proposition 5.7 in \cite{schlenker1998}. Other
relevant references are \cite{fillastre2,izmestiev:projective,iie,rcnp}. 

With affine charts containing weakly ideal polytopes, the hyperplane at
infinity $H_\infty$ is space-like.  Apart from the HS metric and the usual
Euclidean metric, the affine charts can also carry the Minkowski metric.  Then
the point $x_0=H_\infty^\perp$ is the ``center'' of the Minkowski space
$\RR^{2,1}$.  The set of light-like geodesics passing through $x_0$ is called
the \emph{light cone} at $x_0$, denoted by $C(x_0)$.

Let $U = \RRP^3 \setminus H_\infty$ be an affine chart, and $\iota: U \to
\RR^{2,1}$ be the projective embedding into the Minkowski space.  The
infinitesimal Pogorelov map $\Upsilon$ is then defined as the bundle map
$\Upsilon: T U \to T \RR^{2,1}$ over the inclusion $\iota : U \hookrightarrow
\RR^{2,1}$ as follows: $\Upsilon$ agrees with $d \iota$ on $T_{x_0} U$. For any
$x \in U \setminus C(x_0)$, and any vector $v \in T_x U$, write $v = v_r +
v_{\perp}$, where $v_r$ is tangent to the radial geodesic passing through $x_0$
and $x$, and $v_{\perp}$ is orthogonal to this radial geodesic, and define
\[
	\Upsilon(v) = \sqrt{\frac{\| \hat x \|_\HSP^2}{\| d\iota(\hat x) \|_{2,1}^2}}
	d\iota(v_r) + d\iota( v_{\perp}),
\]
where the norm $\| \cdot \|_\HSP$ in the numerator of the first term is the HS
metric, the norm $\| \cdot \|_{2,1}$ in the denominator is the Minkowski metric
and $\hat x$ is the normalized radial vector (so $\|\hat x\|^2_{2,1} = \pm 1$).

The key property of the infinitesimal Pogorelov map is the following (the proof
	is an easy computation in coordinates, that can be adapted from \cite[Lemma
3.4]{fillastre2011}).

\begin{lemma} \label{lem:pogorelov}
	Let $Z$ be a vector field on $U \setminus C(x_0) \subset \HSP^3$. Then $Z$ is
	a Killing vector field if and only if $\Upsilon(Z)$ (wherever defined) is a
	Killing vector field for the Minkowski metric on $\RR^{2,1}$.
\end{lemma}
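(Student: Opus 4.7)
Following the hint in the statement, the proof proceeds as a direct computation in coordinates adapted from \cite[Lemma 3.4]{fillastre2011}. The plan is the following.

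First, I would set up polar-type coordinates $(r,\omega)$ centered at $x_0$, where $r$ is an affine radial parameter along the straight line from $x_0$ in the direction $\omega$, and $\omega$ ranges over the tangent sphere at $x_0$. In these coordinates both the Minkowski metric and the HS metric are invariant under the $O(2,1)$ stabilizer of $x_0$, so each decomposes into a radial block and an angular block. The two metrics agree on the angular block (after the identification by $d\iota$) but their radial profiles differ: by definition of the rescaling factor, the ratio of the HS radial norm to the Minkowski radial norm of a unit radial vector at a point $x$ is exactly $f(r,\omega) := \sqrt{\|\hat x\|_\HSP^2 / \|\hat x\|_{2,1}^2}$.

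Second, I would decompose an arbitrary vector field as $Z = Z_r\,\partial_r + Z_\perp$, and similarly $\Upsilon(Z) = f\, Z_r\,\partial_r + d\iota(Z_\perp)$. In each metric, the symmetric tensor $\mathcal{L}_Z g$ then splits into three natural blocks, $(r,r)$, $(r,\perp)$ and $(\perp,\perp)$. I would verify block by block that the Killing equation for the HS metric applied to $Z$ is equivalent to the Killing equation for the Minkowski metric applied to $\Upsilon(Z)$: the $(r,r)$ and $(\perp,\perp)$ blocks match essentially by the construction of $\Upsilon$, while the mixed $(r,\perp)$ block requires a short calculation in which the derivatives of $f$ along the angular directions compensate the discrepancy between the Christoffel symbols of the two metrics.

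The main obstacle is purely notational: the computation is intricate but structurally identical to the one in \cite{fillastre2011}, with only minor modifications to accommodate the Minkowski signature on the target and the HS signature on the source (and to deal with the locus $C(x_0)$ where $\|\hat x\|_{2,1}^2 = 0$ and $\Upsilon$ is undefined). Conceptually the lemma is forced by the fact that the two metrics share the same unparametrized geodesics (straight lines in the affine chart) and differ only in their arc-length parametrizations along radial rays from $x_0$; since Killing fields are precisely the infinitesimal symmetries that preserve this arc-length parametrization, the pointwise linear map that reparametrizes radial rays correctly---namely $\Upsilon$---must intertwine the two spaces of Killing fields.
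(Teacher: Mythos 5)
The paper itself does not spell this proof out (it cites D\'efinition 5.6 and Proposition 5.7 of Schlenker's 1998 paper and Lemma 3.4 of Fillastre's), so a radial/angular coordinate computation is indeed the intended kind of argument. However, your setup rests on a false premise that derails the execution: in an affine chart centered at $x_0$, the HS metric and the Minkowski metric do \emph{not} agree on the angular block. Already in the Klein model the hyperbolic metric in polar coordinates is $dr^2/(1-r^2)^2 + r^2\,d\omega^2/(1-r^2)$ against the flat $dr^2 + r^2\,d\omega^2$, so the angular coefficients differ by the factor $1/(1-r^2)$, and the same radial factor appears for the $\HSP^3$/$\RR^{2,1}$ pair here. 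If the two metrics really differed only in their radial profiles, then $\Upsilon$, which by construction corrects exactly the radial norm, would be a fibrewise isometry and the lemma would be nearly vacuous; in fact $\Upsilon$ does not preserve the norm of angular vectors, and the nontrivial cancellations occur precisely in the blocks you declare to ``match by construction'': the $(\perp,\perp)$ components of $\mathcal{L}_Zg$ involve the radial component of $Z$ multiplied by the $r$-derivative of the angular coefficient, and the $(r,r)$ components involve the derivative of the rescaling factor, so all three blocks need the specific form of $\sqrt{\|\hat x\|_\HSP^2/\|\hat x\|_{2,1}^2}$ to cancel. Your outline therefore misidentifies where the actual work lies, and carried out as described it would not close.

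The closing ``conceptual'' paragraph does not repair this. That the two metrics share unparametrized geodesics and that $\Upsilon$ corrects the parametrization along \emph{radial} rays does not force it to intertwine Killing fields: the Killing equation is a first-order condition that must be tested against \emph{all} geodesics (equivalently, $Z$ is Killing iff $\langle Z,\gamma'\rangle$ is constant along every geodesic, not only the lines through $x_0$), and even a genuinely fibrewise isometric bundle map over the identity need not send Killing fields to Killing fields, since it need not be compatible with the Levi-Civita connections. So the heart of the lemma --- the identity showing that, for arbitrary (non-radial) directions, the mixed and angular terms produced by the differing angular coefficients are exactly compensated by the derivative of the radial factor --- is missing from your proposal. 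Either carry out that computation in full (as in the cited lemma of Fillastre, adapted to the Lorentzian target and excising $C(x_0)$, where continuity then extends the conclusion), or argue via the geodesic characterization above applied to every straight line, which again requires the same quantitative input.
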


In fact, the lemma implies that the bundle map $\Upsilon$, which so far has
only been defined over $U \setminus C(x_0)$, has a continuous extension to all
of~$U$. The bundle map $\Upsilon$ is called an \emph{infinitesimal Pogorelov
map}.

Next, the bundle map $\Xi: T\RR^{2,1}\to T\RR^{3}$ over the identity, which
simply changes the sign of the $n$-th coordinate of a given tangent vector, has
the same property: it sends Killing vector fields in $\RR^{2,1}$ to Killing
vector fields for the Euclidean metric on $\RR^{3}$. Hence the map $\Pi = \Xi
\circ \Upsilon$ is a bundle map over the inclusion $U \hookrightarrow \RR^3$
with the following property:

\begin{lemma} \label{lem:pogorelov-Euclidean}
	Let $Z$ be a vector field on $U \subset\HSP^3$. Then $Z$ is a Killing vector
	field if and only if $\Pi(Z)$ is a Killing vector field for the Euclidean
	metric on $\RR^3$.
\end{lemma}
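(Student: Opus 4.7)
The plan is to deduce Lemma~\ref{lem:pogorelov-Euclidean} directly from Lemma~\ref{lem:pogorelov}, so that the only new ingredient is a purely linear-algebraic verification that the fiberwise map $\Xi$ exchanges the Lie algebras of Killing fields for the two flat metrics. Since $\Pi = \Xi \circ \Upsilon$, it suffices to prove: $\Upsilon(Z)$ is Killing for the Minkowski metric on $\RR^{2,1}$ if and only if $\Xi(\Upsilon(Z))$ is Killing for the Euclidean metric on $\RR^{3}$. Once this is shown, Lemma~\ref{lem:pogorelov} provides the other half of the equivalence.

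First I will recall the well-known fact that a vector field on a flat affine space is Killing for a constant-coefficient metric with Gram matrix $G$ if and only if it has the affine form $Y(x) = a + Bx$ with $G B + B^{T} G = 0$. So Minkowski Killing fields on $\RR^{2,1}$ correspond to $B$ with $\eta B + B^{T}\eta = 0$, where $\eta = \mathrm{diag}(1,1,-1)$, and Euclidean Killing fields on $\RR^{3}$ correspond to antisymmetric $B$.

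Next I will unwind the definition of $\Xi$. Because $\Xi$ is a bundle map over the identity given fiberwise by the diagonal matrix $J = \mathrm{diag}(1,1,-1)$, which happens to coincide with $\eta$, we have $\Xi(Y)(x) = J a + J B x$ for every affine vector field $Y(x) = a + Bx$ on $\RR^{2,1}$. The linear part of $\Xi(Y)$ is therefore $JB = \eta B$. The Euclidean Killing condition $(JB) + (JB)^{T} = 0$ then reads $\eta B + B^{T}\eta = 0$, which is exactly the Minkowski Killing condition on $B$. Hence $\Xi$ restricts to a linear isomorphism between the two spaces of Killing fields. Applying this to $Y = \Upsilon(Z)$, and composing with Lemma~\ref{lem:pogorelov}, proves the claim.

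There is no serious obstacle: the whole argument is an observation that the sign-change matrix defining $\Xi$ equals the Minkowski Gram matrix $\eta$, so the antisymmetry conditions carved out by the two metrics are conjugated into one another in the most literal way. The only point requiring a little care is noting that $\Upsilon(Z)$, being a Killing field for a flat metric (by Lemma~\ref{lem:pogorelov}), is automatically of the affine form $a + Bx$, which is what allows the reduction to pure linear algebra; and that the continuous extension of $\Pi$ across $C(x_0)$ is inherited from the extension of $\Upsilon$ already established.
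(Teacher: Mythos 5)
Your proposal is correct and follows essentially the same route as the paper: the paper also obtains the statement by composing Lemma~\ref{lem:pogorelov} for $\Upsilon$ with the observation that the sign-change map $\Xi$ carries Minkowski Killing fields to Euclidean Killing fields. The only difference is that the paper simply asserts this property of $\Xi$, whereas you supply the (correct) linear-algebra verification that the condition $\eta B + B^{T}\eta = 0$ is transformed into antisymmetry of $\eta B$.
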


The bundle map $\Pi$ is also called an infinitesimal Pogorelov map, since it is
an infinitesimal version of a remarkable map introduced by Pogorelov \cite{Po}
to handle rigidity questions in spaces of constant curvature. 

\subsection{Rigidity with respect to HS structures} Here, an
\emph{infinitesimal deformation} of $P$ associates a vector tangent to
$\partial \HHP^3$ to each vertex of $P$; the infinitesimal deformation is
trivial if it is the restriction of a global Killing field of $\HSP^3$.

\begin{proposition} \label{prop:rigidmetric}
  Let $P \in \cP_{p,q}$ and $\dot P$ be an infinitesimal deformation of $P$
  within $\cP_{p,q}$. If $\dot P$ does not change the HS structure $\Delta(P)$
  at first order, then $\dot P$ is trivial.
\end{proposition}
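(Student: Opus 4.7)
The plan is to invoke the infinitesimal Pogorelov map $\Pi$ of Lemma \ref{lem:pogorelov-Euclidean} to reduce the statement to the classical Cauchy--Dehn--Alexandrov infinitesimal rigidity of a convex Euclidean polyhedron. First I fix an affine chart $U\subset\RRP^3$ containing $P$; such a chart exists because $P$ is projectively convex. Under the identification $\iota\colon U\hookrightarrow\RR^3$, the polyhedron $P$ appears as a bounded convex Euclidean polyhedron $\iota(P)$ whose vertices lie on the two-sheeted hyperboloid $\iota(\partial\HHP^3)$, and $\dot P$ becomes an assignment of a Euclidean tangent vector at each vertex of $\iota(P)$.

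The hypothesis that $\dot P$ preserves $\Delta(P)$ at first order translates, face by face, into the statement that on each face $F$ of $P$ the restriction $\dot P|_F$ is the trace at the vertices of a Killing field of the $\HSP^2$-plane $L_F$ containing $F$. Indeed, each such face is an ideal polygon with at least three ideal vertices in $L_F\cong\HSP^2$, and its intrinsic HS metric is determined up to isometry of $L_F$ by the position of its ideal vertices on $\partial L_F$; so an infinitesimal isometric deformation of the face is exactly the trace at the vertices of an element of the $3$-dimensional Killing algebra of $L_F$. Applying Lemma \ref{lem:pogorelov-Euclidean} inside each plane $L_F$ then shows that $\Pi(\dot P)$ restricts on every face of $\iota(P)$ to an infinitesimal Euclidean rigid motion of the affine plane containing that face. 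Thus $\Pi(\dot P)$ is a face-rigid infinitesimal deformation of the bounded convex polyhedron $\iota(P)$.

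By the Cauchy--Dehn--Alexandrov infinitesimal rigidity theorem for convex Euclidean polyhedra, any such deformation is the restriction of a global Killing field of the Euclidean metric on $\RR^3$. Applying Lemma \ref{lem:pogorelov-Euclidean} in the reverse direction, $\dot P$ is then the restriction of a global Killing field of $\HSP^3$, which is exactly the definition of triviality. The main technical point is the translation of the hypothesis into the face-wise Killing condition above; no compatibility issue arises at shared edges or vertices because $\Pi$ is a bundle map and therefore produces a single well-defined tangent vector at each vertex of $\iota(P)$ regardless of which adjacent face is used to extract the Killing extension.
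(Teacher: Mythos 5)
Your argument is essentially the paper's own proof: translate the first-order invariance of $\Delta(P)$ into a Killing field on each facet, push everything through the infinitesimal Pogorelov map $\Pi$ of Lemma \ref{lem:pogorelov-Euclidean} to get a face-wise isometric first-order Euclidean deformation of the convex polyhedron in the affine chart, invoke Alexandrov's infinitesimal rigidity, and pull the resulting global Euclidean Killing field back. One small imprecision: after applying $\Pi$ you only get that each face's vertex velocities are the trace of \emph{some} Euclidean Killing field of $\RR^3$ (the image of an ambient extension of the face's HS Killing field), not necessarily a rigid motion preserving the affine plane of that face, but this weaker statement is exactly what Alexandrov's theorem requires, so the proof goes through as in the paper.
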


\begin{proof}
  As always, we work in an affine chart containing $P$.  Suppose that $\dot P$
  is a non-trivial infinitesimal deformation of $P$ that does not change, at
  first order, the HS metric induced on $P$.  Then the induced HS metric on
  each facet is constant at first order.  Hence for each facet $F$, there is a
  Killing field $\kappa_F$ such that the restriction of $\kappa_F$ to the
  vertices of $F$ is equal to the restriction of $\dot P$, and for two facets
  $F$ and $G$, $\kappa_F$ and $\kappa_G$ agree on the common edge of $F$ and
  $G$.
  
  Lemma \ref{lem:pogorelov-Euclidean} shows that $\bar \kappa_F =
  \Pi(\kappa_F)$ is the restriction of a Killing field of $\RR^3$, while it is
  clear that if $F$ and $G$ share an edge, then $\bar \kappa_F$ and $\bar
  \kappa_G$ agree on this edge. Therefore the restriction of $\bar \kappa_F$ to
  the vertices of $P$ define an isometric first-order Euclidean deformation of
  $P$.
  
  However, Alexandrov \cite{alexandrov1950} proved that convex polyhedra in
  $\RR^3$ are infinitesimally rigid: any first-order Euclidean isometric
  deformation must be the restriction of a global Killing vector field of
  $\RR^3$. So $\bar \kappa_F$ must be the restriction of a global Killing
  vector field $\bar \kappa$.  Lemma \ref{lem:pogorelov-Euclidean} therefore
  implies that $\kappa_F$ are the restriction to the faces of $P$ of a global
  Killing vector field $\kappa = \Pi^{-1}(\bar \kappa)$, which contradicts our
  hypothesis.
\end{proof}

\subsection{Shape parameters associated to edges} We can identify $\partial
\HHP^3$ with the extended complex plane $\CC\mathrm{P}^1$, then vertices of an
ideal polytope $P$ can be described by complex numbers.  By subdividing
non-triangular facets if necessary, we may assume that facets of $P$ are all
triangles.  Let $z_1z_2z_3$ and $z_2z_1z_4$ be two facets of $P$ oriented with
outward pointing normal vectors.  The \emph{shape parameter} on the common
(oriented) edge $z_1z_2$ is the cross ratio
\[
	\tau=[z_1,z_2;z_3,z_4]=\frac{(z_1-z_3)(z_2-z_4)}{(z_2-z_3)(z_1-z_4)}.
\]

Recall that each triangular facet of $P$ determines a strongly ideal triangles
with the same vertices.  The two (oriented) hyperbolic triangles corresponding
to $z_1z_2z_3$ and $z_2z_1z_4$ form a hyperbolic dihedral angle at their common
edge $z_1z_2$.  Let $\phi$ denote the \emph{hyperbolic} exterior angle at
$z_1z_2$.  Then the shape parameter $\tau$ has a geometric interpretation: it
can be written in the form of $\tau=\exp(\sigma+i\phi)$, where $\sigma$ is the
shearing between the two hyperbolic triangles.

The angle $\phi$ can be read from the hyperbolic vertex figure (see
Section~\ref{sec:intrinsic}).  If one walks along the polygonal curve, in the
same direction as we specified for reading $\theta$ (see
Section~\ref{sec:overview}), then $\phi$ is nothing but the turning angle at
every vertex, taking anti-clockwise turns as positive, and clockwise turns as
negative.

Let $v$ be a vertex of $P$, and let $\tau_1, \tau_2, \cdots, \tau_k$ be the
shape parameters associated to the edges of $P$ adjacent to $v$, in this cyclic
order.  The following relations, which holds for strongly inscribed polyhedra,
also holds for the weakly inscribed $P$.
\begin{align}
	\prod_{i=1}^k \tau_i & =1,\label{eq:shape1}\\
	\sum_{j=1}^k \prod_{i=1}^j \tau_i & =0.\label{eq:shape2}
\end{align}
Both equations can be easily understood by considering the hyperbolic vertex
figure at $v$: \eqref{eq:shape1} follows from the fact that $\sum\sigma_i=0$
while $\sum\phi_i$ is a multiple of $2\pi$.  \eqref{eq:shape2} is just saying
that the vertex figure, considered as a polygonal curve in the Euclidean plane,
closes up.

The shape parameters determine the local geometry (angle and shearing) at every
edge, hence completely describe the polyhedron.  A small perturbation in the
shape parameter subject to \eqref{eq:shape1} and \eqref{eq:shape2} corresponds
to a deformation of $P$ into another weakly ideal polyhedron.  Indeed, the
convexity is stable under a small perturbation, then \eqref{eq:shape1} and
\eqref{eq:shape2} guarantee that the hyperbolic vertex figures are closed
polygonal curves, hence they are vertex figures of a weakly inscribed
polyhedron.

\subsection{Rigidity with respect to dihedral angles}

% Recall that shearing is defined from the hyperbolic metric, hence $\sigma$ is
% nothing but the shearing between $z_1z_2z_3$ and $z_2z_1z_4$.  The relation
% between $\phi$ and the dihedral angle $\theta$ depends on the configuration;
% see Table~\ref{tbl:angles}.  But the relation is always linear.  In particular,
% if an infinitesimal deformation of $P$ does not change $\phi$ to the first
% order, it does not change $\theta$ to the first order neither.

We now have the necessary tools to prove the infinitesimal rigidity of weakly
ideal polyhedra with respect to their dihedral angles. 

\begin{proposition} \label{prop:rigidangle}
  Let $P \in \cP_{p,q}$ and $\dot P$ be an infinitesimal deformation of $P$
  within $\cP_{p,q}$. If $\dot P$ does not change the dihedral angles
  $\Theta(P)$ at first order, then $\dot P$ is trivial.
\end{proposition}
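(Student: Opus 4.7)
The plan is to parallel the Pogorelov-map strategy used in Proposition~\ref{prop:rigidmetric}, with the induced-metric hypothesis replaced by the dihedral-angle hypothesis. Concretely, I would show that $\dot\theta=0$ on every edge forces the infinitesimal deformation $\dot P$ to preserve the induced HS structure $\Delta(P)$ at first order; Proposition~\ref{prop:rigidmetric} then concludes that $\dot P$ is trivial.

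First I would recast the data in terms of the shape parameters $\tau_e = \exp(\sigma_e + i\phi_e)$, after subdividing non-triangular facets into triangles. The imaginary part $\phi_e$ is the hyperbolic exterior angle between the two strongly ideal replacement triangles meeting along $e$, while the HS dihedral angle $\theta_e$ is the turning angle read off the HS vertex figure. Comparing the two vertex figures edge by edge (as in Sections~\ref{sec:overview} and~\ref{sec:intrinsic}) shows that $\theta_e$ and $\phi_e$ agree up to an explicit sign determined by the red/blue coloring of the edges incident to each endpoint, so the linear map $(\dot\phi_e) \mapsto (\dot\theta_e)$ is diagonal and invertible. Hence $\dot\theta_e = 0$ on every edge forces $\dot\phi_e = 0$ on every edge, i.e.\ $\dot\tau_e/\tau_e \in \RR$ throughout.

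Next I would linearise the vertex relations \eqref{eq:shape1} and \eqref{eq:shape2} at every vertex of $P$. Because $\dot\phi=0$, the imaginary parts of these linearised relations become a purely real linear system in the variables $\dot\sigma_e$. The same argument used by Rivin~\cite{rivin1994} and Hodgson-Rivin-Smith~\cite{hodgson1992} for strongly ideal polyhedra shows that this system has trivial kernel: the linearised relations depend only on the shape parameters and the combinatorics of the star of each vertex, not on whether the individual edges are interior or exterior. Therefore $\dot\sigma_e = 0$ as well, the full shape-parameter variation vanishes, and the HS structure $\Delta(P)$ is preserved at first order. Applying Proposition~\ref{prop:rigidmetric} completes the proof.

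The main obstacle I anticipate is the first step: making the edge-by-edge comparison between $\dot\phi_e$ and $\dot\theta_e$ fully precise, because the HS vertex figure crosses the line at infinity in a manner that depends globally on the combinatorics around the vertex, and the sign discrepancies must be tracked consistently with the coloring pattern of Condition~\ref{con:vertex}. Once this local linear algebra is in place, the rest of the argument is essentially a combination of the Pogorelov-Alexandrov rigidity already proved in Proposition~\ref{prop:rigidmetric} with the classical ideal-polyhedron rigidity of \cite{hodgson1992, rivin1994}.
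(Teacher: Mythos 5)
Your first step is essentially the paper's: fixing the HS dihedral angles at first order fixes the arguments of the shape parameters, so $\dot\tau_e/\tau_e\in\RR$ for every edge. The gap is in your second step. The claim that the linearised vertex relations \eqref{eq:shape1}--\eqref{eq:shape2}, restricted to real variations $\dot\sigma_e$, have trivial kernel is not something you can import from Rivin or Hodgson--Rivin--Smith: that trivial-kernel statement \emph{is} the infinitesimal angle rigidity you are asked to prove, now for \emph{weakly} ideal polyhedra where those results do not apply; and the justification you offer (``the linearised relations depend only on the shape parameters and the combinatorics of the star'') cannot be right as a mechanism, because rigidity statements of this kind genuinely use convexity and the position of $P$ relative to the sphere, not just local combinatorial data. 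As written the argument is circular, and its structure betrays this: if you really could conclude $\dot\sigma_e=0$, hence $\dot\tau_e=0$, from that linear system, the deformation would already be trivial and the closing appeal to Proposition~\ref{prop:rigidmetric} would be superfluous.

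The paper's proof avoids exactly this trap with a multiplication-by-$i$ trick that your proposal misses. Because the constraints \eqref{eq:shape1} and \eqref{eq:shape2} are polynomial in the shape parameters, the space of first-order variations $\dot\tau$ is complex linear, so $i\dot\tau$ is again realised by an infinitesimal deformation $i\dot P$ of $P$ within $\cP_{p,q}$. Since $\dot\tau_e/\tau_e$ is real, $i\dot\tau_e/\tau_e$ is purely imaginary, which means $i\dot P$ fixes all shears at first order and hence preserves the induced HS structure; Proposition~\ref{prop:rigidmetric} then makes $i\dot P$ trivial, and triviality of $\dot P$ follows. Note the logical order: one does not show that the given angle-preserving deformation preserves the metric (a priori it need not, absent the rigidity one is proving); one rotates it into a metric-preserving deformation and applies the already-established metric rigidity. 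Repairing your proof amounts to inserting exactly this observation in place of the unproved trivial-kernel claim.
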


\begin{proof}
  Let $\dot P \in \CC^{p+q}$ be an infinitesimal deformation of $P$,
  represented by the velocity of the vertices in $\CC\mathrm{P}^1$.  Let $\dot
  \tau=(\dot \tau_e)_{e \in E} \in \CC^{|E|}$ be the corresponding first-order
  variation of the shape parameters associated to the edges.  Suppose that
  $\dot P$ does not change the dihedral angles of $P$ (at first order).  This
  means that for all $e \in E$, $\dot \tau_e/\tau_e$ is real, because the
  argument of $\tau_e$ is equal to the dihedral angle at the corresponding
  edge.

  Now consider the first-order variation $i\dot \tau=(i\dot \tau_e)_{e \in E}$
  of the shape parameters. A crucial observation is that, since the conditions
  (1) and (2) above are polynomial, $i\dot \tau$ again corresponds to a
  first-order deformation of $P$, which we can call $i \dot P$.  Now for all $e
  \in E$ , $i \dot \tau_e/\tau_e$ is imaginary. This means that in the
  first-order deformation $i \dot P$, the shear along the edges remains fixed
  (at first order).  So $i \dot P$ does not change, at first order, the
  HS-structure induced on $P$.

  By Proposition \ref{prop:rigidmetric}, $i \dot P$ is trivial, and it follows
  that the infinitesimal deformation $\dot P$ is also trivial. The result
  follows.
\end{proof}

\section{Topology}\label{sec:topology}
\subsection{Ideal polyhedra}

In this section we will conclude that $\Theta$ and $\Delta$ are homeomorphisms.
The first step is to prove that the domain $\cP_{p,q}$ is connected.

We work in a projective chart inconsistent with $\HH^3$, in which $\partial
\HH^3$ is the quadric of equation $x_2 = f(x_0,x_1) = \pm\sqrt{x_0^2+x_1^2+1}$.
The following lemma allows us to place any weakly $(p,q)$-ideal polyhedron in a
convenient position.

\begin{lemma}
	For any $P \in \cP_{p,q}$, there is an isometry $T$ of $\HH^3$ such that
	$T(P)$ contains the origin and the points $(0,0,\pm 1)$.
\end{lemma}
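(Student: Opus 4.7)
The plan has two stages. First I use the transitivity of the isometry group of $\HH^3$ to send a chosen pair of vertices of $P$ to $(0,0,\pm 1)$; then I observe that the origin must lie in $T(P)$ automatically by convexity and the projective geometry of the chart.

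Since $p, q \geq 1$, I pick $v^+ \in V^+$ and $v^- \in V^-$, one vertex on each sheet of $\partial \HH^3$. The isometry group of $\HH^3$ acts on $\partial \HH^3 \simeq S^2$ as the M\"obius group, which is (at least) doubly transitive on ordered pairs of distinct points; so there exists an isometry $T$ of $\HH^3$ with $T(v^+) = (0,0,1)$ and $T(v^-) = (0,0,-1)$, giving the two prescribed points as vertices of $T(P)$.

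To see that $(0,0,0) \in T(P)$, I consider the chord $C = [v^+, v^-]$, the intersection of $P$ with the projective line $\ell$ through $v^\pm$; since $P$ is convex and bounded in the chart, this is the bounded affine segment, and it lies in $P$. The line $\ell$ meets the quadric $\partial \HH^3$ only at $v^\pm$, so the interior of $C$ is a connected subset of $\HH^3 \sqcup \dS^3$; it crosses $\{x_2 = 0\}$, which is disjoint from $\HH^3$ (the inequality $x_2^2 > x_0^2 + x_1^2 + 1$ forces $|x_2| > 1$), so the whole interior of $C$ lies in $\dS^3$. Since $T$ preserves $\HH^3$, $\dS^3$, and $\partial \HH^3$ globally, $T(C)$ is an arc on the $z$-axis joining $(0,0,\pm 1)$ with interior in $\dS^3$. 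Of the two arcs of the $z$-axis joining $(0,0,1)$ and $(0,0,-1)$, the ``long'' one passes through the projective point at infinity $[0:0:1:0]$ and is made up of the two rays $|x_2| > 1$ joined there, hence lies in $\HH^3$; only the ``short'' arc through the origin lies in $\dS^3$. Therefore $T(C)$ is the short arc, and $(0,0,0) \in T(C) \subset T(P)$. The only delicate point is identifying which arc of the $z$-axis lies in $\dS^3$, which is immediate from the defining inequality of $\HH^3$ in the chart, so I do not expect a serious obstacle.
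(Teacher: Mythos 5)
Your proof is correct, but it takes a genuinely different route from the paper's. You normalize by $2$-point transitivity of the isometry group of $\HH^3$ on its ideal boundary, sending one vertex from each sheet to $(0,0,\pm 1)$, and then get the origin for free from a chord argument: the segment of $P$ joining these two vertices has interior in $\dS^3$ (it crosses $\{x_2=0\}$ and avoids the quadric), and of the two projective arcs of the $x_2$-axis joining the poles only the bounded one through the origin lies in $\dS^3$, the unbounded one passing through the point at infinity of the axis, which lies in $\HHP^3$; so the image of the chord is the bounded arc. The one hypothesis you should state explicitly is the paper's standing convention that $P$ is contained in the chosen affine chart (the chart inconsistent with $\HH^3$ in which $\partial\HH^3$ is the standard two-sheeted hyperboloid); this is what guarantees that the segment of $P$ joining $v^+$ and $v^-$ is the bounded affine one. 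The paper argues differently, via hyperbolic hyperplane separation: it picks $u\in P^+=P\cap\HH^3_+$ and $v\in P^-$ realizing the minimal hyperbolic distance, shows the perpendicular bisector plane of $[uv]$ separates $P^+$ from $P^-$, and sends that plane to infinity, so that its polar point (contained in $P$) goes to the origin and the line $uv$ goes to the $x_2$-axis. That argument is heavier, but it buys a stronger normalization: the origin and $(0,0,\pm 1)$ land in the \emph{interior} of $T(P)$, so in particular no vertex of $T(P)$ sits at a pole. This extra strength is quietly used in the ensuing connectedness proof, where the deformation $P_t$ is claimed to eventually place all vertices on the planes $x_2=\pm t$ --- a claim that fails for a vertex sitting exactly at $(0,0,\pm 1)$, which is what your construction produces. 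So your argument proves the lemma as stated, but substituting it into the paper would require a small adjustment in the proposition that follows, whereas the paper's version avoids that.
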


\begin{proof}
	The proof uses the Hyperplane Separation Theorem for hyperbolic space.  We
	sketch here a quite standard proof in the spirit of~\cite{boyd2004}, as some
	ingredient in this proof would be useful for us.
	
	Note that $\HH^3 \cap P$ consists of two disjoint components, denoted by
	$P^+$ and $P^-$, both are convex subsets of $\HH^3$.  Let $u \in P^+$ and $v
	\in P^-$ such that the hyperbolic distance between $u$ and $v$ achieves the
	minimum hyperbolic distance between $P^+$ and $P^-$.  This distance is
	necessarily finite, hence $u$ and $v$ are necessarily on the boundary
	$\partial P$.  We claim that the hyperbolic plane $H$ that perpendicularly
	bisects the segment $\overline{uv}$ separates $P^+$ and $P^-$, i.e.\ $P^\pm$
	are on different sides of $H$.  To see this, assume $u' \in P^+$ is on the
	same side of $H$ as $P^-$.  Then a perturbation of $u$ towards $u'$ would be
	closer to $v$, contradicting our choice of $u$ and $v$.

	Now let $T$ be the isometry of $\HH^3$ that sends the separating plane $H$ to
	infinity.  Then the polar point of $H$, which is contained in $P$, is sent to
	the origin in the interior of $T(P)$.  Moreover, the line through $u$ and $v$
	is sent to the $x_2$-axis.  In particular, the points $(0,0,\pm 1)$ are also
	in the interior of $T(P)$.
\end{proof}

\begin{proposition}
  $\cP_{p,q}$ is connected.
\end{proposition}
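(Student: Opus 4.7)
The plan is to reduce the connectedness of $\cP_{p,q}$ to connectedness of a product of planar configuration spaces, and then verify that a continuous deformation through such configurations stays in $\cP_{p,q}$. By the preceding lemma, after applying a hyperbolic isometry I may assume every $P \in \cP_{p,q}$ contains the origin and the points $(0,0,\pm 1)$. This normalization fixes all the hyperbolic-isometry ambiguity except the residual $SO(2)$-action by rotations about the $x_2$-axis, and this residual group is connected, so it suffices to exhibit a continuous path in the normalized space from any $P$ to a fixed canonical polyhedron.

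After normalization, the vertex set of $P$ consists of $p$ labeled points on the upper sheet $\partial \HHP^3_+ = \{x_2 = \sqrt{x_0^2+x_1^2+1}\}$ and $q$ labeled points on the lower sheet $\partial \HHP^3_-$. The linear projection $\pi \colon (x_0,x_1,x_2)\mapsto (x_0,x_1)$ is a diffeomorphism of each sheet onto $\RR^2$; by Proposition~\ref{prop:Hstruct}, the images $\pi(v_1^+), \dots, \pi(v_p^+)$ are the vertices of a convex $p$-gon in $\RR^2$ traversed in the prescribed cyclic order, and similarly for the lower vertices. I would then take the canonical target with upper vertices at $\bigl(\cos(2\pi i /p), \sin(2\pi i /p), \sqrt{2}\bigr)$ and lower vertices at $\bigl(\cos(2\pi j /q), \sin(2\pi j /q), -\sqrt{2}\bigr)$, and construct a path from $P$ to this target by deforming the projected upper and lower configurations within the spaces of labeled cyclically ordered convex $p$- and $q$-gons in $\RR^2$, both of which are connected (in fact contractible) open subsets of $\RR^{2p}$ and $\RR^{2q}$.

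The main obstacle I expect is the coupling between the two sheets: even if the planar configurations stay in convex position on each sheet, the three-dimensional polyhedron obtained by lifting via $\pi^{-1}$ and taking the convex hull could fail to have all labeled points extreme, because a point on one sheet might land on a $3$-dimensional face spanned by vertices from both sheets. This bad locus is a proper algebraic subvariety of the product configuration space, so it has codimension at least one, and by a small perturbation of the straight-line interpolation one can route the path around it. Alternatively, one can first spread each configuration radially far out on its own sheet, where every labeled point becomes visibly extreme by a tangent-plane tilting argument, and only then interpolate to the canonical target. Combined with the connectedness of the residual $SO(2)$-factor, this yields path-connectedness of $\cP_{p,q}$.
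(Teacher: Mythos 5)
Your argument breaks at the very first substantive step. Proposition~\ref{prop:Hstruct} is a combinatorial statement (the induced subgraph on each of $V^+$, $V^-$ is outerplanar); it does \emph{not} say that the vertical projections $\pi(v_1^+),\dots,\pi(v_p^+)$ are in convex position, and in fact that claim is false, even after your normalization. For a concrete counterexample, take three upper vertices $u_i$ at $(R\hat e_i,\sqrt{R^2+1})$ with directions $\hat e_i$ spaced $2\pi/3$ apart and $R$ large, a fourth upper vertex $v$ over the direction bisecting $\hat e_1,\hat e_2$ at moderate distance $d$ (say $2\le d\le R/4$), and three lower vertices forming a tiny triangle around $(0,0,-1)$ on the lower sheet. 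The resulting hull contains the origin and $(0,0,\pm1)$ in its interior, and all seven points are vertices: $v$ lies strictly \emph{below} the lower envelope of the hull of the other six over its projection (that envelope has height roughly $2d-1>\sqrt{d^2+1}$ there), so it is extreme, even though $\pi(v)$ lies strictly inside the triangle $\pi(u_1)\pi(u_2)\pi(u_3)$. So your parametrization of $\cP_{p,q}$ by pairs of labeled convex polygons already misses part of the space, and the path you propose cannot even be started from an arbitrary $P$.

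The way you handle the ``coupling'' obstacle is also not a proof. The locus of labeled configurations on the two sheets for which some labeled point fails to be a vertex of the hull is not a proper algebraic subvariety: being strictly inside the hull of the other points is an open condition, so this bad set has nonempty interior; and even a genuinely codimension-one set can disconnect the ambient space, so ``perturb the path around it'' assumes exactly the connectedness you are trying to prove. The alternative route (spread radially far out, then interpolate) is equally unsubstantiated: in the example above the vertex $v$ remains a vertex under radial scaling while its projection stays interior to the others' hull, so points do not become ``visibly extreme,'' and in neither variant do you verify that the deformation stays inside $\cP_{p,q}$ --- which is the whole difficulty. The paper's proof addresses precisely this: it flows the vertices along the gradient of $f$ down to the two planes $x_2=\pm t$, checks via supporting hyperplanes (using that $(0,0,\pm1)\in P$) that every labeled point remains a vertex throughout the flow, and then concludes because configurations with vertices on the two circles $x_2=\pm t$ are automatically in convex position and form a connected set. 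Some argument of this kind, controlling convex position along the entire path, is what your proposal is missing.
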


\begin{proof}
	Let $P \in \cP_{p,q}$.  Thanks to the previous lemma, we may assume that $P$
	contains the origin and the points $(0,0,\pm 1)$.  We now define a
	deformation of $P \in \cP_{p,q}$.
	
	If $v \in V_\pm$, define $v_t$, $t>1$, as follows:  $v_t=v$ if the
	$x_2$-coordinate of $v$ is smaller than $t$; otherwise, $v_t$ is a point of
	height $x_2=\pm t$ obtained by moving $v$ along the gradient of $f$ (for
	metric induced on the quadric by the Euclidean metric $dx_0^2+dx_1^2+dx_2^2$
in a chart) towards $(0,0,\pm 1)$.  We claim that the point set $V_t=\{v_t \mid
v \in V_+ \cup V_-\}$ remains in convex position for all $t > 1$.  If $v_t \in
V_t$ is at height $x_2=\pm t$, $v_t$ is on the circle $x_0^2+x_1^2=t^2-1$; the
convexity at $v_t$ is then immediate.  Otherwise, $v_t$ coincides with a vertex
$v$ of $P$, and we claim that a supporting hyperplane $H_v$ of $P$ at $v$ is
also a supporting hyperplane of $P_t$.  This can be seen by noting that, for
any $u \in \partial\HH_\pm^3$ on the same side of $H_v$ as $(0,0,\pm 1)$, as
long as $u$ is sufficiently close to $H_v$, the gradient of $f$ at $u$ points
away from $H_v$.  Because $(0,0,\pm 1) \in P$ and $H_v$ is supporting, no
vertex $v_t$ would move across $H_v$ as $t$ decreases.

	Define $P_t$ as the convex hull of $V_t$.  We see that $P_t = P$ for $t$
	sufficiently large.  As $t$ approaches $1$, the vertices of $P_t$ lie,
	eventually, on two horizontal planes $x_2=\pm t$.

	Now assume another polyhedra $P' \in \cP_{p,q}$, which also contains the
	origin and the points $(0,0,\pm 1)$.  For $t$ sufficiently close to $1$, both
	$P_t$ and $P'_t$ have vertices on the horizontal planes $x_2=\pm t$.
	Polyhedra with vertices on these two planes form a connected subset of
	$\cP_{p,q}$; indeed, any choice of $p$ and $q$ ideal points on these two
	planes determines uniquely such a polyhedra.  Hence we find a continuous path
	from between $P$ and $P'$, which proves the connectedness of $\cP_{p,q}$.
\end{proof}

Let $\cP_{p,q}^i$ denote the open subset of $\cP_{p,q}$ consisting of polyhedra
with an edge $1^+i^-$.  $\{\cP_{p,q}^i \mid 1 \le i \le q\}$ form an open cover
of $\cP_{p,q}$.  During the deformation $P_t$ in the previous proof, we may
rotate $V_-$ around the $x_2$-axis so that $1^+i^-$ remains an edge.  This
shows that

\begin{proposition}
	$\cP_{p,q}^i$ is connected.
\end{proposition}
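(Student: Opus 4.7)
The plan is to imitate the proof of connectedness of $\cP_{p,q}$ just given, tracking the edge $1^+ i^-$ throughout the deformation and using rotations around the $x_2$-axis to preserve it. Given $P \in \cP_{p,q}^i$, I first apply the preceding lemma to place $P$ in a position where the origin and the points $(0,0,\pm 1)$ lie in its interior, and then run the same one-parameter deformation $P \mapsto P_t$: for $t$ large enough $P_t = P$, while as $t \to 1^+$ the vertices of $V^+$ and $V^-$ end up on the horizontal circles $\{x_2 = \pm t\}$ of radius $\sqrt{t^2-1}$.

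The new ingredient is a continuous family $(R_{\theta(t)})$ of rotations of $\RR^3$ around the $x_2$-axis, applied to the vertices of $V^-$ only. Such rotations are hyperbolic isometries of $\HH^3$ that preserve each horizontal plane $\{x_2 = c\}$ setwise, so applying $R_{\theta(t)}$ to $V^-$ alone produces another element of $\cP_{p,q}$, with the same cyclic orders on the two cycles. The set of pairs $(t, \theta)$ for which $1^+ i^-$ is an edge of the corresponding polyhedron is open, and it is nonempty at the initial time $(t_0, 0)$ with $t_0$ large since $P \in \cP_{p,q}^i$. By a continuous-selection argument, I can then choose $\theta(t)$ continuously to maintain the edge $1^+ i^-$ throughout, producing a continuous path in $\cP_{p,q}^i$ from $P$ to a polyhedron $\widetilde P$ with vertices on two horizontal circles and with $1^+ i^-$ still an edge.

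In this limiting regime, a polyhedron is determined (up to an overall rotation of the whole configuration) by the angular positions of its vertices on the two circles. The subset of the parameter space realizing the edge $1^+ i^-$ is cut out by open angular conditions, giving a product of open arcs, hence a path-connected set. Concatenating the deformation path from $P$ to $\widetilde P$, a path in the limiting family from $\widetilde P$ to any other normalized polyhedron $\widetilde{P}'$, and the reverse deformation into a given $P' \in \cP_{p,q}^i$, shows that $\cP_{p,q}^i$ is path-connected. The main obstacle is justifying the continuous choice of $\theta(t)$: one must check that at each $t$ during the deformation the set of admissible $\theta$ is nonempty, which follows because the rotational freedom of $V^-$ can always bring $i^-$ into the angular arc below $1^+$ that is joined to $1^+$ by blue edges of the convex hull.
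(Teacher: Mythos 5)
Your proposal follows essentially the same route as the paper, which disposes of this proposition in a single sentence: during the deformation $P_t$ from the proof of connectedness of $\cP_{p,q}$, one rotates $V^-$ about the $x_2$-axis so that $1^+i^-$ remains an edge, and then uses connectedness of the configurations with vertices on the two horizontal circles. Your write-up supplies more detail (the continuous choice of rotation angle and the analysis of the limiting regime) than the paper does, but the idea and its execution are the same.
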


\subsection{Admissible graphs}

Correspondingly, let $\cA_{p,q}^i$ denote the open subset of $\cA_{p,q}$
consisting of graphs $\theta$ with $1^+i^-$ as an edge (recall the vertex
labeling from Section~\ref{sec:define}).  That is, either $\theta(1^+i^-)<0$,
or $\theta(1^+i^-)=0$ but the graph remains polyhedral if we include $1^+i^-$
as an edge.  Then $\{ \cA_{p,q}^i \mid 1 \le i \le q \}$ form an open cover of
$\cA_{p,q}$.

\begin{proposition}
	For $1 \le i \le q$, $\cA_{p,q}^i$ is homeomorphic to $\RR^{2(p+q-3)}$.
\end{proposition}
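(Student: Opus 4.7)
My plan is to show that $\cA_{p,q}^i$ is a contractible topological manifold of dimension $2(p+q-3)$ and then conclude, via standard topology, that it is homeomorphic to $\RR^{2(p+q-3)}$. To this end, I would first analyze the cell-complex structure of $\cA_{p,q}^i$. Each maximal cell $C_T$ corresponds to a polyhedral triangulation $T$ of the planar graph with the prescribed cycle cover and containing the edge $1^+i^-$. Within $C_T$, the admissibility conditions (A1) and (A3) define an open convex region in the affine subspace of $\RR^{|E(T)|}$ cut out by the vertex-sum equations (A2); the dimension of this affine subspace is $|E(T)|-(p+q) = 2(p+q-3)$, because the incidence matrix of a non-bipartite graph (any triangulation, since it contains a triangle) has full row rank $p+q$. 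Lower-dimensional cells arise when some diagonal weight vanishes, corresponding to non-triangulated polyhedral graphs, and form the common boundaries between pairs of maximal cells related by a diagonal flip.

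Second, I would establish contractibility and the local manifold property. Fix a canonical triangulation $T_0$ containing $1^+i^-$ (for example, the ``double fan'' with apices $1^+$ and $i^-$: add the edges $1^+j^-$ and $j^+i^-$ to the cycle edges), and a basepoint $\theta_0$ in the interior of $C_{T_0}$. Any $\theta \in \cA_{p,q}^i$ lying in some $C_T$ can be deformed to $\theta_0$ by first performing the sequence of diagonal flips connecting $T$ to $T_0$ (each flip occurring as one diagonal weight decreases to zero and the complementary diagonal weight increases from zero, while the admissibility inequalities remain satisfied), then contracting linearly within $C_{T_0}$. For the manifold property: interior points of maximal cells are locally Euclidean by construction; at a codimension-one flip boundary, two maximal cells glue along a common hyperplane, giving a local $\RR^{2(p+q-3)}$-model; higher-codimension cells are locally modeled as products of flip models.

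Finally, a contractible topological manifold of dimension $n$ that is simply connected at infinity is homeomorphic to $\RR^n$. The end behavior of $\cA_{p,q}^i$ is controlled by the extreme values of the weights (a red weight approaching $0$ or $\pi$, a blue weight approaching $0$ or $-\pi$, or (A3) approaching equality), which provides the required asymptotic simple connectivity. The main obstacle is verifying the manifold property at high-codimension cells, where several simultaneous flips coexist: one must check that their combinatorial structure glues the adjacent maximal cells in a way consistent with the standard Euclidean local model. This reduces to a careful analysis of the flip graph on polyhedral triangulations of the annulus between the two cycles, combined with the linearity of the admissibility conditions within each cell.
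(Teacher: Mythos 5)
Your route (show $\cA_{p,q}^i$ is a contractible $2(p+q-3)$-manifold, simply connected at infinity, then invoke the topological characterization of $\RR^n$) is genuinely different from the paper's, which instead builds an explicit global parametrization: for $p<2<q$ the positive weights are described as the open join of a $(q-1)$-simplex (weights on the $q$-cycle) with the boundary of the associahedron (weights on non-crossing diagonals), and the negative weights are then uniquely determined; for $2\le p\le q$ the positive part is parametrized the same way on each outerplanar component and the negative weights are produced by a greedy algorithm whose only freedom is one weight and one sign at the first step, giving the missing dimensions. Unfortunately, as it stands your proposal has several genuine gaps, and the most serious one is exactly the point you flag as ``the main obstacle'': nothing in the sketch shows that the union of cells is a topological manifold near the lower-dimensional strata where several edge weights vanish simultaneously. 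A union of convex cells glued along faces is in general not a manifold, and controlling the local structure there is essentially as hard as the original problem; the paper sidesteps it entirely by never working cell by cell. Related to this, you implicitly assume every polyhedral triangulation $T$ (with the prescribed two-cycle cover and containing $1^+i^-$) carries a nonempty open cell $C_T$; nonemptiness is governed by the nontrivial combinatorial Condition~\ref{con:alternate}, so the flip graph you want to use must first be restricted to realizable triangulations, and this is not addressed.

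Two further steps are asserted rather than proved. First, contractibility: the flip-path argument gives at best path-connectedness, and even that requires showing that a diagonal weight can be driven to $0$ while the vertex-sum constraints~\ref{con:vertex} are maintained and the inequalities~\ref{con:range} and~\ref{con:negsum} stay satisfied; since decreasing one weight forces compensating changes on other edges, this is a real verification, not a formality, and contractibility needs a deformation retraction, not just paths to a basepoint. Second, simple connectivity at infinity is claimed from ``extreme values of the weights'' with no argument; describing the ends of this space is delicate precisely because the boundary mixes degenerations of very different nature (a weight tending to $0$, to $\pm\pi$, or the blue sum tending to $-2\pi$). Finally, note that the concluding step relies on heavy machinery (Stallings/Siebenmann for $n\ge 5$, Freedman for $n=4$, and separate low-dimensional arguments for $n=2,3$), and the dimension $2(p+q-3)$ ranges over all values $\ge 2$, so you would in particular need the $n=4$ case; invoking these theorems is legitimate only once the manifold, contractibility and end conditions are actually established, which is where the proposal currently stops. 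By contrast, the paper's explicit join-plus-algorithm parametrization yields the homeomorphism to $\RR^{2(p+q-3)}$ directly, with the only auxiliary ingredient being the scaling lemma that also shows the redundancy of Condition~\ref{con:negsum}.
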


We treat two cases separately.

\begin{proof}[Proof for the case $p<2<q$]
  We may take $i=1$ without loss of generality.  To construct a graph $\theta
  \in \cA_{p,q}^1$, we first assign positive weights then negative weights.

  For the positive weights, our task is to find an outerplanar graph $\theta^+$
  on the vertices of the $q$-cycle with only positive edge weights.  For the
  final result to be an element of $\cA_{p,q}^1$, we need the support of
  $\theta^+$ to contain the $q$-cycle, and the sum of the weights of $\theta^+$
  to be $\pi$.  This condition on the sum can be seen by adding up the weight
  sum of $\theta$ around all the vertices of the $q$-cycle, and noticing that
  the edges of $\theta^+$ are double counted.  Let $\cA^+$ be the set of all
  $\theta^+$ satisfying those conditions.

  Note that $\theta^+$ can be embedded in the plane in such a way that the
  $q$-cycle is embedded as a $q$-gon, and the other edges are embedded as
  non-crossing diagonals of the $q$-gon.  Let $\cA_0^+$ denote the set of
  positive graphs that are only supported on the $q$-cycle, and $\cA_1^+$ the
  set of positive graphs that are only supported on non-crossing diagonals.
  Then any $\theta^+ \in \cA^+$ can be written in the form $\theta^+ = (1-t)
  \theta^+_0 + t \theta^+_1$ where $\theta^+_0 \in \cA_0$, $\theta^+_1 \in
  \cA_1$, for some $0 \le t < 1$ (note the strict inequality!).

	It is easy to see that $\cA_0^+$ is a $(q-1)$-dimensional open simplex.
	Graphs in $\cA_1^+$ with the same combinatorics (that is, supported on the
	same diagonals) also form an open simplex, whose dimension is the cardinality
	of their support minus $1$.  $\cA_1^+$ is therefore a simplicial complex: The
	maximal simplices are of dimension $q-4$, corresponding to the maximal set of
	non-crossing diagonals, and they are glued along their faces corresponding to
	common subsets.  This simplicial complex is well-known as the boundary of a
	polytope, namely the \emph{associahedron}~\cite{lee1989}.

	Therefore, the closure of $\cA^+ = (1-t) \cA^+_0 + t \cA^+_1$, $0 \le t < 1$,
	is topologically the join of a $(q-1)$-ball and a $(q-4)$-sphere, hence
	homeomorphic to a $(2q-4)$-ball.  The openness of $\cA^+$ follows from the
	openness of $\cA^+_0$ and the strict inequality $t<1$.

	Once positive weights are assigned, the negative weights are uniquely
	determined since $p=1$ and all vertices of the $q$-cycle are connected to
	only one negatively weighted edge.  Hence $\cA_{p,q}^i$ is homeomorphic to
	$\RR^{2(p+q-3)}$.
\end{proof}

We need more ingredients for the case $2 \le p \le q$.

First, we claim that if a graph $\theta$ is admissible, and the negative
weights of $\theta$ sum up to $-2 \omega > -2 \pi$, then for any
$0<t<\pi/\omega$, the scaled graph $t\theta$ is also admissible.  The claim
follows from the following lemma, which guarantees that
Condition~\ref{con:range} is not violated after the scaling:

\begin{lemma}
  Let $-2\omega > -2\pi$ be the sum of negative weights of $\theta$.  Then any
  negative weight of $\theta$ is at least $-\omega$.
\end{lemma}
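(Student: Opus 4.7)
The plan is to argue by contradiction: suppose some blue edge $e_0 = uv$, with $u$ on the $p$-cycle and $v$ on the $q$-cycle, satisfies $w(e_0) < -\omega$. I will derive a contradiction using only Conditions (A1) and (A2), without invoking (A3).

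First I would observe that the hypothesis $-2\omega > -2\pi$ rules out the exceptional $1$-cycle case of (A2): if $p=1$, the unique vertex of the $1$-cycle has vertex-sum $-2\pi$, and since every blue edge is incident to it, this forces $\omega = \pi$, contradicting $\omega < \pi$. Hence I may assume $2 \le p \le q$, so the unexceptional form $\sum_{e \ni x} w(e) = 0$ of (A2) holds at every vertex.

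The key identity to establish is $R_p = \omega$, where $R_p$ denotes the total weight of the red edges of the $p$-cycle (i.e.\ edges with both endpoints on the $p$-cycle). Summing (A2) over all $p$ vertices of the $p$-cycle, each such red edge is counted twice while each blue edge is counted exactly once, because by the coloring rule every blue edge has exactly one endpoint on the $p$-cycle. Using the sign information from (A1), this sum reads $2R_p + (\text{sum of all blue weights}) = 0$, i.e.\ $2R_p - 2\omega = 0$, hence $R_p = \omega$. An analogous identity holds for the $q$-cycle.

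Finally I analyze the vertex $u$. Condition (A2) at $u$ combined with (A1) says that the (positive) sum $w^+(u)$ of red weights at $u$ equals the absolute value of the sum of blue weights at $u$; since all blue weights at $u$ carry the same sign, the latter is at least $|w(e_0)| > \omega$. But every red edge at $u$ joins two vertices of the $p$-cycle, so $w^+(u)$ is a sum of nonnegative terms all appearing in $R_p$, giving $w^+(u) \le R_p = \omega$. This produces the contradiction $\omega \ge w^+(u) > \omega$, so $w(e_0) \ge -\omega$ as claimed. The only delicate point is the double-counting bookkeeping leading to $R_p = \omega$, which depends crucially on the bipartite nature of blue edges guaranteed by the coloring from Condition (C1).
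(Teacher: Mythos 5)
Your proof is correct, and it follows a somewhat different route from the paper's. The paper also argues by contradiction starting from an endpoint $v$ of the offending blue edge $e$, using (A2) at $v$ to see that the red weight at $v$ exceeds $\omega$; but it then propagates one step further, applying (A2) at each red-neighbor $v_1,\dots,v_k$ of $v$ to produce blue edges, disjoint from $e$, whose weights already sum to less than $-\omega$. Together with $w(e)<-\omega$ this makes the total blue weight strictly less than $-2\omega$, contradicting the definition of $\omega$ --- so the paper never needs a global identity. You instead sum (A2) over all vertices of the $p$-cycle to get the clean structural fact $R_p=\omega$ (the red weight carried by each cycle equals half the magnitude of the blue total, using that every blue edge meets the $p$-cycle exactly once), and then a single application of (A2) at $u$ gives the contradiction $\omega\ge w^+(u)>\omega$. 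Both arguments use only (A1), (A2) and the bipartite nature of the blue edges; yours makes the bookkeeping identity explicit (and it is a reusable fact in its own right), while the paper's stays entirely within a two-step neighborhood of the bad edge. Your preliminary reduction ruling out the exceptional $1$-cycle case of (A2) via $-2\omega>-2\pi$ is a point the paper leaves implicit, and it is handled correctly.
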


\begin{proof}
  We argue by contradiction and suppose that there is an edge $e$ with negative
  weight strictly less than $-\omega$. Let $v$ be an endpoint of $e$. The sum
  of the positive weights on the red edges $e_1,\cdots, e_k$ adjacent to $v$ is
  at least $\omega$. Let $v_1,\cdots, v_k$ be their endpoints different from
  $v$; $e$ is not adjacent to any of them.  Then the sum of the negative
  weights over the blue edges adjacent to $v_1,\cdots, v_k$ must be strictly
  less than $-\omega$.  We then conclude that the sum of negative weights is
  not $-2\omega$ as assumed, but strictly less.
\end{proof}

This lemma also proves the redundancy of Condition~\ref{con:negsum} and part of
Condition~\ref{con:range}.  Any weight function that satisfy
Conditions~\ref{con:sign} and~\ref{con:wsum} can be normalized to satisfy
Conditions~\ref{con:range}--\ref{con:negsum}.  Hence these conditions are
not present in the main Theorem~\ref{thm:weights}.

\begin{proof}[Proof for the case $2 \le p \le q$]
	We may take $i=1$ without loss of generality.  Fix a number $0<\omega<\pi$.
	We will prove that the set of admissible graphs in $\cA_{p,q}^1$ with
	negative weights summing up to $-2\omega$ is homeomorphic to
	$\RR^{2(p+q)-7}$.  To construct such a graph $\theta$, we follow the same
	strategy as before.  That is, first assign positive weights then negative
	weights.

	For the positive weights, we need to find $\theta^+$ that is the disjoint
	union of two outerplanar graphs, one on the vertices of the $p$-cycle, and
	the other on the vertices of the $q$-cycle, with only positive edge weights.
	Moreover, we need the sum of the weights of each disjoint component to be
	$\omega$.  Hence each component can be obtained by taking the $\theta^+$
	constructed for the case $p<2<q$, and multiplying its weights by a constant
	$\omega/\pi$.  The space of such $\theta^+$ is then homeomorphic to
	$\RR^{2(p+q-4)}$.

	We then propose an algorithm that assigns weights to negative edges and
	outputs an admissible graph in $\cA_{p,q}^1$.  This algorithm depends on one
	parameter taken in a segment, hence proves the proposition.

	Recall that vertices are labeled by $1^+, \dots, p^+$ and $1^-,\dots,q^-$
	respectively, following the boundary of the outerplanar subgraphs in a
	compatible direction.  Also recall that the weight $w_v$ of a vertex $v$ is
	the sum of weights $\theta(u,v)$ over all other vertices $u$.  The vertex
	weights change as we update the edge weights.  Before we proceed, the weights
	are positive on every vertex, because only positive weights are assigned.
	Our goal is to cancel them with negative weights.  We also keep track of two
	indices $i$ and $j$, initially both $1$.  At each step, we assign a negative
	weight to the edge $i^+j^-$.  Moreover, the graph will be embedded in $\SS^2$
	throughout the algorithm.

	We start with an embedding of $\theta^+$ in $\SS^2$, such that the two
	outerplanar components are embedded as two disjoint polygons with
	non-crossing diagonals.  Interiors of the two polygons are declared as
	forbidden area: during our construction, no new edge is allowed to intersect
	this area.  In other words, we are only allowed to draw edges within a belt.

	For the first step, we draw a curve connecting $1^+$ and $1^-$, to which we
	are free to assign any non-positive weight ranging from $-\min(w_{1^+},
	w_{1^-})$ to $0$.  We also have the freedom to choose a sign $\sigma=\pm$,
	and increment $i$ if $\sigma=+$, increment $j$ if $\sigma=-$.

	In the following steps, we adopt a greedy strategy.  We draw a curve
	connecting $i^+$ and $j^-$, to which we assign the weight $-\min(w_{i^+},
	w_{j^-})$.  After this assignment, the face bounded by $i^+j^-$ and the
	previously assigned edge is considered as a forbidden area for later
	construction: no new edge is allowed to intersect this area.  Then we
	increment $i$ if $w_{i^+}=0$, and increment $j$ if $w_{j^-}=0$.  If both
	weights vanish, we increment both indices.

	% Note that since $\theta \in \cA_{p,q}$, the sum of vertex weights is
	% initially equal, and remains so throughout the procedure.
	Eventually, we will have $i=p+1$ and $j=q+1$, and the weights vanish at all
	vertices.  Then the algorithm stops.  The result is by construction the
	embedding of a $(p,q)$-admissible graph.

	In this algorithm, being greedy is not only good, but also necessary.  Note
	that the weights between vertices of smaller indices are already fixed.  If
	we choose any bigger negative weight for the edge $i^+j^-$, then both
	$w_{i^+}$ and $w_{j^-}$ remain positive.  They both need to be connected to a
	vertex with larger index to cancel the weight.  This is however not possible,
	since neighborhoods of these vertices have been declared as forbidden area.

	The algorithm is parametrized only by the two choices at the first step: a
	non-positive weight and a sign.  The space of parameters is therefore
	homeomorphic to a segment.
\end{proof}

We have $\Theta^{-1}(\cA_{p,q}^i) \subseteq \cP_{p,q}^i$.  Let $\Theta_i$
denote the restriction of $\Theta$ on $\cP_{p,q}^i$; it is a covering map with
images in $\cA_{p,q}^i$.  Since $\cP_{p,q}^i$ is connected and $\cA_{p,q}^i$
simply connected, we conclude that $\Theta_{p,q}^i$ is a homeomorphism.  This
proves that the covering number of $\Theta$ is $1$, i.e.\ $\Theta$ is a
homeomorphism.

\subsection{Admissible HS structures}

% \begin{proposition} The closure of $\cM_{p,q}$ is homeomorphic to
% $\RR^{2(p+q-3)}$ if $p<2<q$, or $\RR^{2(p+q)-5} \times \SS^1$ if $2 \le p \le
% q$.  \end{proposition}
%
Let $\HHP^2_\ell$ denote the complete, simply connected hyperbolic surface with
one cone singularity of angle $\ell$.  We extend this notation, and use
$\HHP^2_0$ for the hyperbolic surface with one cusp.  A non-degenerate boundary
component of $\dS^2_\ell$ can be identified to the boundary of $\HHP^2_\ell$.

Let $B$ be a subset of $p$ points on $\partial\HHP^2$, considered up to
isometries of $\HHP^2$.  Let $\cG_{B,\ell}$ be the space of CPP maps from
$\partial\HHP^2$ to $\partial\HHP^2_\ell$, $0 < \ell < 2\pi$, up to isometries
of both $\HHP^2$ and $\HHP^2_\ell$, with positive break point set $B$.  We
denote by $\cG_B$ the union of $\cG_{B,\ell}$ for $0<\ell<2\pi$, i.e.\ set of
all CPP maps on $\partial\HHP^2$ with positive break point set $B$, up to
isometries.

A \emph{horocyclic $p$-gon} is the intersection of $p$ horodisks in $\HHP^2$.
Figure~\ref{fig:horogon} shows a horocyclic $3$-gon and a horocyclic $4$-gon. 

\begin{figure}[hbt] 
  \centering 
  \includegraphics[width=.9\textwidth]{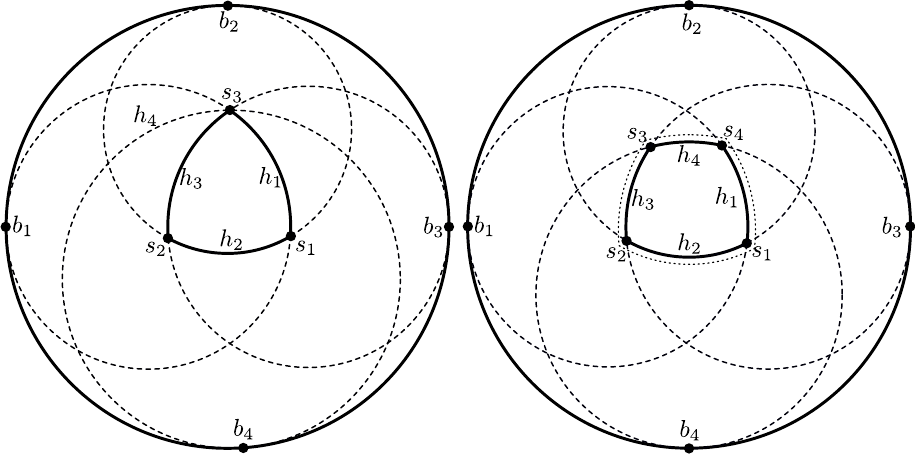}
 	\caption{
 		The horocyclic $3$-gon on the left, together with the horocycle $h_4$,
 		represent a point in $\partial\cG_B$ with $B=\{b_1,b_2,b_3,b_4\}$.
 		Shrinking $h_4$ truncates the $3$-gon into the $4$-gon on the right.  We
 		also sketch a scaling of the $4$-gon, which is the key in the proof of
 		Proposition~\ref{prop:2p-4}.  \label{fig:horogon}
	}
\end{figure}

The key observation is the following homeomorphism $\eta$ from $\cG_B$ to the
space of horocyclic $p$-gons bounded by horocycles based at $B$.

Label the elements of $B$ as $b_1, \dots, b_p=b_0$ in the clockwise order in
the disk model.  They are the vertices of an ideal $p$-gon $P \subset \HHP^2$.
Consider a map $\gamma \in \cG_{B,\ell}$.  It maps $P$ to an ideal $p$-gon $P'
\subset \HHP^2_\ell$ with a cone singularity of angle $\ell$ in its interior.
The vertices of $P'$ are $\gamma(b_i)$.

We then obtain a horocyclic $p$-gon $\eta(\gamma)$ as follows.  Cut $P'$ into
$p$ triangles by the geodesics from $\gamma(b_i)$ to $s$, the cone singularity
of $\HHP^2_\ell$.  Then fold each triangle $\gamma(b_i) s \gamma(b_{i+1})$
inward, isometrically, into the triangle $b_i s_i b_{i+1}$ in $\HHP^2$.  Let
$h_i$ be the horocycle based at $b_i$ passing through $s_i$. Then
$\eta(\gamma)$ is the intersection of the horodisks bounded by the $h_i$'s.

Since $\gamma$ preserves horocycles, $s_{i-1}$ must also lie on $h_i$.  Hence
the horocyclic segments $s_is_{i+1}$ form a piecewise horocyclic closed curve,
denoted by $h$.

\begin{lemma}
  $h$ is embedded as the boundary of $\eta(\gamma)$.
\end{lemma}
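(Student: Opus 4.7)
The plan is to perform a local analysis at each horocycle center $b_i$ in the upper half-plane model of $\HHP^2$, then assemble the pieces using the convexity of the intersection of horodisks. Send $b_i$ to $\infty$; then $h_i$ appears as a horizontal line at some height $y_i$, and the geodesic rays $b_i s_{i-1}$ and $b_i s_i$ (the ideal edges of the folded triangles $T_{i-1}=b_{i-1}s_{i-1}b_i$ and $T_i=b_is_ib_{i+1}$ emanating from $b_i$) appear as vertical rays. Since both $s_{i-1}$ and $s_i$ lie on $h_i$, they have the same height $y_i$, so the arc of $h_i$ joining them in this chart is a horizontal segment.

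The key input is the positivity of the break point at $b_i$, formalized in Lemma~\ref{lem:C2} and proved geometrically in Proposition~\ref{prop:C2}: the shift $D_{b_i}\gamma$ moves points on $h_i$ in the ``truncating'' (clockwise) direction. Interpreted in the folding picture, I would show that this forces $T_{i-1}$ and $T_i$ to lie on opposite ``outer'' sides of the vertical strip bounded by the rays $b_is_{i-1}$ and $b_is_i$, so that the horizontal segment of $h_i$ between $s_{i-1}$ and $s_i$ inside this strip is free of folded-triangle material and lies on $\partial H_i$ on the side facing the interior of the horocyclic $p$-gon. In other words, the local analysis identifies, among the two possible arcs of $h_i$ joining $s_{i-1}$ and $s_i$, the specific one that is a piece of the boundary of $\bigcap_jH_j$ near $b_i$.

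To globalize, I would use that $\bigcap_jH_j$ is convex (as an intersection of convex horodisks in $\HHP^2$) and hence its boundary is a simple closed curve composed of arcs of the bounding horocycles $h_j$, with vertices at pairwise intersections $h_j\cap h_{j+1}$. Since $s_j$ lies on both $h_j$ and $h_{j+1}$, these intersections supply the candidate vertices; combining with the local identification above at each $b_i$, this forces $h$ to coincide with $\partial\eta(\gamma)$ as a simple closed curve.

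The main obstacle I anticipate is ruling out the degenerate global possibilities: a priori the intersection $\bigcap_jH_j$ might be empty, or the arcs on distinct $h_j$'s could cross in unexpected ways so that $s_i$ is on $h_i\cap h_{i+1}$ but is \emph{not} a vertex of $\partial\bigcap_jH_j$ (because some other $H_k$ excludes it). To exclude this I would verify that for each $j\ne i-1,i$ the point $s_j$ lies strictly inside $H_i$, using a symmetric application of the local positivity argument at $b_j$ together with the cone-angle constraint $0<\ell<2\pi$. A continuity argument, deforming $\gamma$ to a symmetric ``small'' configuration where the horocyclic polygon is obviously a small disk and its boundary is obviously $h$, would serve as a useful consistency check for the combinatorial picture.
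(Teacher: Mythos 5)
Your local step is the same as the paper's: positivity of the break point at $b_i$ (Proposition~\ref{prop:C2}, Remark~\ref{rem:martin}) forces the consecutive folded triangles $b_{i-1}s_{i-1}b_i$ and $b_is_ib_{i+1}$ to overlap, which pins down the cyclic position of $s_{i-1}$ and $s_i$ on $h_i$ and hence which horocyclic arc of $h_i$ is the candidate boundary arc. The gap is in your globalization. Convexity of $\bigcap_j H_j$ only tells you that \emph{its} boundary is an embedded curve made of horocyclic arcs; it does not tell you that this curve is $h$. For that you must show that every vertex $s_j$ lies in \emph{every} horodisk $H_i$ (and that no arc of $h$ is cut off by a distant $H_k$), and this is essentially the statement of the lemma itself. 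You propose to get it from ``a symmetric application of the local positivity argument at $b_j$ together with the cone-angle constraint,'' but positivity at $b_j$ is a purely local statement about the two triangles adjacent to $b_j$ and gives no control whatsoever on the position of $s_j$ relative to a horodisk $H_i$ with $i$ far from $j$; a chain of locally overlapping triangles could a priori wrap around or produce crossings between non-adjacent arcs. Some genuinely global input is needed, and that is exactly what the paper supplies: it defines a circle map $g:\partial\HHP^2\to\partial\HHP^2$ by ``visibility'' through the points $s_i$ (via the map $\xi$), observes that $g$ is continuous and monotone with $x\notin g^{-1}(x)$, and concludes that its degree is $1$, i.e.\ $h$ winds exactly once and is embedded, hence is $\partial\eta(\gamma)$. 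Nothing in your outline plays the role of this degree/winding argument.

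Two further cautions. First, your fallback ``continuity argument, deforming $\gamma$ to a symmetric small configuration'' risks circularity: the structure and connectivity of the spaces $\cG_{B,\ell}$ and $\cG_B$ are established in the paper \emph{after}, and by means of, the homeomorphism $\eta$, which depends on this lemma; if you want an open-closed/deformation argument you must prove the required connectedness independently. Second, even granting that all $s_j\in\bigcap_i H_i$, you still need to check that the chosen arc of $h_i$ between $s_{i-1}$ and $s_i$ does not exit some other $H_k$ and re-enter (two distinct horocycles can meet in two points), so the reduction ``vertices inside $\Rightarrow$ arcs on the boundary'' also needs an argument rather than an appeal to convexity alone.
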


\begin{proof}
	Since $b_i$ are positive break points of $\gamma$, the triangles $b_i s_i
	b_{i+1}$ and $b_{i-1}s_{i-1}b_i$ must overlap; see Proposition~\ref{prop:C2}
	and Remark~\ref{rem:martin}.  In other words, the points $b_i$, $s_{i-1}$ and
	$s_i$ are placed on $h_i$ in the clockwise order.  See
	Figure~\ref{fig:horogon} for examples.

	For $x \in \partial\HHP^2$ and $y \in \HHP^2$, we use $\xi(x,y)$ to denote the
	other ideal end of the geodesic that emerges from $x$ and passes through $y$.
	Define a map $g:\partial\HHP^2\to\partial\HHP^2$ such that $g^{-1}(b_i) =
	\{\xi(b_i, y) \mid y \in h_i \cap h\}$ and, for some $x$ between $b_i$ and
	$b_{i+1}$, $g^{-1}(x) = \xi(x, s_i)$.  The map $g$ is continuous and
	monotone, and has the property that $x \notin g^{-1}(x)$, hence its degree
	must be $1$.  This proves that $h$ is embedded, hence the boundary of
	$\eta(\gamma)$.
\end{proof}

Conversely, given a horocyclic polygon bounded by horocycles $h_i$ based at
$b_i \in B$, let $s_i \in h_i \cap h_{i+1}$ be its vertices.  Then we can glue
the triangles $b_i s_i b_{i+1}$ into an ideal $p$-gon with a cone singularity
of angle $\ell$.  More specifically, $\ell$ is the sum of the angles $\angle
b_i s_i b_{i+1}$.  This proves that $\eta$ is a homeomorphism.

\begin{remark}
	It is interesting to note from the horocyclic polygons that $\ell < 2\pi$.
	Let $s$ be a point in the interior of the horocyclic polygon.  We then have
	$\angle b_i s_i b_{i+1} < \angle b_i s b_{i+1}$ for all $i$.  Yet the sum of
	$\angle b_i s b_{i+1}$ is equal to $2\pi$.
\end{remark}

\begin{proposition}
  $\cG_B$ is homeomorphic to $\RR^{|B|}$.
\end{proposition}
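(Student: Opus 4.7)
The plan is to use the homeomorphism $\eta$ from the preceding lemma to identify $\cG_B$ with the space of horocyclic $p$-gons in $\HHP^2$ whose boundary horocycles are based at the fixed points $B=\{b_1,\dots,b_p\}$, and then to exhibit a homeomorphism from this space to $\RR^p$. Each horocycle at $b_i$ is determined by a single real parameter, for instance the value at a fixed reference point of the Busemann function centered at $b_i$, so assigning to a horocyclic $p$-gon the tuple $(t_1,\dots,t_p)$ of its boundary horocycle parameters yields a continuous injection $\Psi$ into $\RR^p$ with open image $V\subseteq\RR^p$. It therefore suffices to show that $V$ is homeomorphic to $\RR^p$.

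My approach is induction on $p=|B|$. The base cases $p=1$ (a single horodisk, parametrized by $\RR$) and $p=2$ (a lens, whose existence condition on the two horocycle parameters cuts out an affine half-plane in $\RR^2$, and hence a space homeomorphic to $\RR^2$) are immediate. For the inductive step, I would exploit the truncation operation depicted in Figure~\ref{fig:horogon}: removing the horocycle $h_p$ from a horocyclic $p$-gon produces a horocyclic $(p-1)$-gon with basepoints $B\setminus\{b_p\}$, giving a continuous forgetful map $\pi$ from the space of $p$-gons to that of $(p-1)$-gons. The fiber of $\pi$ over a $(p-1)$-gon $P'$ consists of those horocycles $h_p$ that truncate $P'$ without erasing the contribution of any existing $h_j$; this fiber is an open interval in $\RR$, bounded below by the case where $h_p$ is too small to touch $P'$ and above by the case where $h_p$ grows so large that it absorbs an existing horocycle's contribution to the boundary.

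The main obstacle is to show that this $\RR$-interval bundle is trivial. Since the base is contractible by the inductive hypothesis, it would suffice to establish local triviality, which I would verify by a direct geometric argument: the two endpoints of the fiber interval depend continuously on $P'$, as they are determined by tangency conditions between $h_p$ and the existing horocycles at specific vertices of $P'$. A continuous section can then be exhibited by, for example, choosing $h_p$ to pass through the vertex of $P'$ lying between $b_{p-1}$ and $b_1$ in cyclic order, and a fiberwise affine reparametrization onto $\RR$ (subtracting the section's value from $t_p$ and rescaling to remove the remaining endpoint) trivializes the bundle. Combining this with the inductive hypothesis gives $V\cong\RR^{p-1}\times\RR=\RR^p$, as required.
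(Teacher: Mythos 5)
Your strategy coincides with the paper's: identify $\cG_B$ with the space of horocyclic $p$-gons via $\eta$, induct on $p=|B|$, and realize the space of $p$-gons as an interval bundle over the space of $(p-1)$-gons obtained by forgetting the horocycle at $b_p$; your extra attention to trivializing that bundle (continuity of the endpoints plus a section) addresses exactly what the paper leaves implicit. The problem is that your description of the fiber is geometrically wrong, and this is the substance of the inductive step. The new polygon is $P'\cap D_p$, where $D_p$ is the horodisk bounded by $h_p$; enlarging $D_p$ removes \emph{less} of $P'$, not more, and $b_p$ is not an ideal vertex of the compact polygon $P'$, so one is not slicing off a small piece near $b_p$ with a small horoball. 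The admissible horodisks are bounded \emph{above} by the one whose boundary passes through the vertex $s_{p-1}$ (for any larger horodisk one has $D_p\supset P'$, so $h_p$ contributes no side; nothing is ever ``absorbed'' from the other horocycles by enlarging $D_p$), and bounded \emph{below} by the horodisk whose boundary reaches another vertex of $P'$, at which moment the side on $h_1$ or $h_{p-1}$ is swallowed entirely. Your proposed lower endpoint, ``$h_p$ too small to touch $P'$'', is never the binding constraint: well before $D_p$ loses contact with $P'$, whole sides of $P'$ have already disappeared from the boundary of $P'\cap D_p$, so the set of parameters you describe strictly contains the true fiber.

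Two further consequences. First, the section you propose, $h_p$ through the vertex of $P'$ lying between $b_{p-1}$ and $b_1$ (that is, through $s_{p-1}$), is precisely the upper endpoint of the fiber and hence not an element of it; you need an interior choice, for instance the Busemann parameter midway between the two (continuously varying) endpoint values, after which your fiberwise affine reparametrization does trivialize the bundle. Second, nonemptiness of the fiber is used but never argued: one must know that the vertex of $P'$ maximizing the Busemann function of $b_p$ is $s_{p-1}$, so that the new side appears between the sides on $h_{p-1}$ and $h_1$ in the cyclic position dictated by the labelling; this follows from the fact that the sides of an intersection of horodisks occur along the boundary in the cyclic order of their base points (a point the paper also takes for granted). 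With the fiber and the section corrected in this way, your induction reproduces the paper's proof.
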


\begin{proof}
  The proof is by induction on the cardinality $p=|B|$.  Up to isometries of
  $\HHP^2$, we may consider $B$ fixed.

  For $p=2$, a horocyclic $2$-gon $P_2$ is bounded by two horocycles.  It is
  determined by the position of an intersection point of the two horocycles.
  This point can be chosen arbitrarily in $\HHP^2$, proving the statement for
  $p=2$.

  Now consider a horocyclic $(p-1)$-gon $P_{p-1}$ bounded by horocycles $h_1,
  \dots, h_{p-1}$ based at $b_1, \dots, b_{p-1} \in B$, and let $s_i \in h_i
  \cap h_{i+1}$ and $s_{p-1} \in h_{p-1} \cap h_1$ denote the vertices of
  $P_{p-1}$.  We now construct a horocyclic $p$-gon $P$ bounded by horocycles
  with bases in $B$.  For this, it suffices to choose a horocycle $h_p$ that
  truncates the vertex $s_{p-1}$ of $P_{p-1}$.  This can be done by taking the
  horocycle $h_p$ at $b_p$ passing through $s_{p-1}$, then shrinking it.  On
  the left of Figure~\ref{fig:horogon} we illustrate a truncation of a $3$-gon
  into a $4$-gon.  We can continue to shrink $h_p$ until it hits another vertex
  of $P_{p-1}$.

  Hence $\cG_B$ is homeomorphic to $\cG_{B \setminus \{b_p\}} \times \RR$,
  which is $\RR^{|B|}$ by induction.
\end{proof}

In the following we will consider the closures of $\cG_B$.  The boundary
$\partial\cG_B$ consists of three parts, namely $\cG_{B,0}$, $\cG_{B,2\pi}$,
and $\partial\cG_{B,\ell}$, $0 < \ell < 2\pi$.  We now define and describe
these spaces.

We use the notation $\cG_{B,0}$ for the space of CPP maps from $\partial\HHP^2$
to $\partial\HHP^2_0$, up to isometries of both $\HHP^2$ and $\HHP^2_0$, with
positive break point set $B$.  As before, we can interpret a map $\gamma \in
\cG_{p,0}$ as gluing the boundary of an ideal $p$-gon $P \subset \HHP^2$ to the
boundary of an ideal $p$-gon $P' \subset \HHP^2_0$, where $P'$ contains a cusp
$s$.  We triangulate $P'$ by connecting its vertices to $s$, and triangulate
$P$ arbitrarily, and glue them through $\gamma$ to obtain a triangulation $T$
of the $2$-sphere.  The shearing coordinates on the $p-3$ interior edges of $P$
are determined by the positions of the break points.  The shearing coordinates
on the remaining $2p$ edges of $T$ are governed by the condition that the
shearing around each vertex of $T$ should sum up to $0$.  Since $T$ has $p+1$
vertices, we conclude that $\cG_{B,0}$ is homeomorphic to
$\RR^{p-1}=\RR^{|B|-1}$.

The part $\partial\cG_{B,\ell}$, $0 < \ell < 2\pi$, consists of CPP maps from
$\partial\HHP^2$ to $\partial\HHP^2_\ell$ up to isometries of both $\HHP^2$ and
$\HHP^2_\ell$, with break point set $B' \subset B$ and marked projective point
set $B \setminus B'$.  The homeomorphism $\eta$ extends continuously to this
boundary.  More specifically, let $p=|B|$ and $p'=|B'|$.  Then for $\gamma \in
\partial\cG_{B,\ell}$, $\eta(\gamma)$ is a horocyclic $p'$-gons $P'$ bounded by
horocycles with bases in $B'$, together with $p-p'$ additional horocycles with
bases in $B\setminus B'$ ``supporting'' $P'$ (that is, they intersect $\partial
P'$ but disjoint from the interior of $P'$).  The left side of
Figure~\ref{fig:horogon} is an example with $p=4$ and $p'=3$.

We also abuse the notation $\cG_{p,2\pi}$ for the space of projective
homeomorphisms from $\partial\HHP^2$ to itself up to isometries of $\HHP^2$,
with a set of marked points $B$.  In fact, the marking here is superficial;
hence $\cG_{p,2\pi}$ consists of a single element.

\medskip

Let $\delta_i$ denote the distance from $s_i$ to the geodesic $b_ib_{i+1}$.
The hyperbolic triangle formula shows that $\alpha_i$ and $\delta_i$ are
related by the formula $\cosh\delta_i \sin\alpha_i/2 = 1$.  We now deform the
horocyclic $p$-gon by moving every $s_i$, simultaneously, along the geodesic
perpendicular to $b_ib_{i+1}$, to a new position $s'_i$.  The following lemma
is the crucial observation for the proof that follows later.

\begin{lemma} \label{lem:deform}
	If the deformation is performed in such a way that the ratio
	$\cosh\delta_i/\cosh\delta'_i$ is the same for every $i$, then there is a
	horocycle $h'_i$ passing through every adjacent pair $s'_{i-1}$ and $s'_i$.
\end{lemma}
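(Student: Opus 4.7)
The plan is to pass to the upper half-plane model of $\HHP^2$ and, for each index $i$ in turn, choose a chart in which $b_i=\infty$. In this chart the horocycle $h_i$ based at $b_i$ becomes a horizontal line $\{y=y_\ast\}$, so that the statement ``$s'_{i-1}$ and $s'_i$ lie on a common horocycle based at $b_i$'' reduces to the purely metric equality $y'_{i-1}=y'_i$ of the $y$-coordinates of the two deformed points.

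Fix $i$ and normalize so that $b_i=\infty$ and $b_{i+1}=0$. Then the geodesic $g_i=b_ib_{i+1}$ is the positive imaginary axis, and since $s_i\in h_i$ we may write $s_i=(x_i,y_\ast)$. The perpendicular to $g_i$ through $s_i$ is the Euclidean half-circle centered at the origin of radius $R_i=\sqrt{x_i^2+y_\ast^2}$. The standard formulas for the distance from a point to a vertical geodesic give $\sinh\delta_i=|x_i|/y_\ast$ and hence $\cosh\delta_i=R_i/y_\ast$. Parametrizing the deformed point on the same half-circle as $s'_i=(R_i\cos\theta',R_i\sin\theta')$, one finds $\sinh\delta'_i=|\cot\theta'|$ and $\cosh\delta'_i=1/\sin\theta'$, and therefore
\[
  \frac{y'_i}{y_\ast}\;=\;\frac{R_i\sin\theta'}{y_\ast}\;=\;\frac{\cosh\delta_i}{\cosh\delta'_i}.
\]
The same computation applied to $s_{i-1}=(x_{i-1},y_\ast)$, whose perpendicular to $g_{i-1}=b_{i-1}b_i$ is in this chart the Euclidean half-circle centered at $b_{i-1}\in\RR$, yields the analogous identity $y'_{i-1}/y_\ast=\cosh\delta_{i-1}/\cosh\delta'_{i-1}$.

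By hypothesis both ratios equal the same constant $\lambda$, so $y'_{i-1}=y'_i=\lambda y_\ast$. Hence $s'_{i-1}$ and $s'_i$ lie on the horizontal line $\{y=\lambda y_\ast\}$, which is precisely a horocycle based at $b_i=\infty$; this horocycle is the desired $h'_i$. The only real obstacle is choosing the right chart: the computation itself is a one-line check once one observes that the quantity $\cosh\delta/\cosh\delta'$ is exactly the multiplicative shift of the Busemann value at either ideal endpoint of $g$ under motion along the perpendicular to $g$. The shared-ratio hypothesis then guarantees that the two endpoints of the horocyclic arc $[s_{i-1},s_i]\subset h_i$ undergo the same Busemann shift at $b_i$, and so remain cohorocyclic at $b_i$ after deformation.
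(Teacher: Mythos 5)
Your proof is correct and follows essentially the same route as the paper: choose the half-plane chart with $b_i=\infty$, note that both $s_{i-1}$ and $s_i$ then lie at the same height on the horizontal horocycle $h_i$, and use the identity $\cosh\delta = R/y$ (equivalently the paper's $\cosh\delta_j\sin\psi_j=1$) to see that motion along the perpendicular half-circles rescales each height by exactly the ratio $\cosh\delta/\cosh\delta'$, so the common-ratio hypothesis keeps $s'_{i-1}$ and $s'_i$ at equal height, i.e.\ on a horocycle based at $b_i$. This matches the paper's argument, with your version only making the coordinate computation slightly more explicit.
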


\begin{proof}
	Let $k$ be the common ratio of $\cosh\delta_i / \cosh\delta'_i$.  We use the
	half-plane model of $\HHP^2$, and assume that $b_i=\infty$.  The situation is
	illustrated in Figure~\ref{fig:deform}.  Let $\psi_i$ be the (Euclidean)
	angle $\angle s_i b_{i+1} b_{i-1}$ and $\psi_{i-1}$ be the angle $\angle
	s_{i-1} b_{i-1} b_{i+1}$.  Then for $j=i$ or $i-1$, we can calculate the
	distances (see for instance \cite[\S 3.5]{anderson2005})
	\[
		\delta_j = \ln\left| \operatorname{csc} \psi_j + \cot \psi_j \right|
		= \operatorname{arccosh}\operatorname{csc}\psi_j.
	\]
	This is particularly convenient because we have $\cosh\delta_j \sin\psi_j =
	1$.  Hence $\cosh\delta_j / \cosh\delta'_j=k$ implies that $\sin\psi_j /
	\sin\psi'_j = 1/k$ for every $i$.  In the half-plane model, $s_i$ and
	$s_{i-1}$ are moving along the circles centered at $b_{i+1}$ and $b_{i-1}$,
	respectively, such that their heights are both scaled by $1/k$.  Therefore,
	their new positions $s'_{i-1}$ and $s'_i$ are again at the same height, hence
	on the same horocycle $h'_i$ based at $b_i = \infty$.
\end{proof}

\begin{figure}[hbt] 
  \centering 
  \includegraphics[width=.4\textwidth]{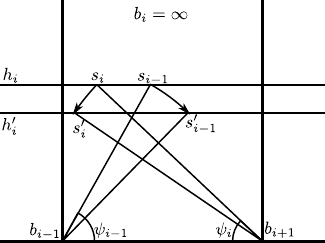}
 	\caption{
 		Proof of Lemma~\ref{lem:deform}. \label{fig:deform}
	}
\end{figure}

This deformation is sketched on the right side of Figure~\ref{fig:horogon}.  We
see from Figure~\ref{fig:deform} that, by moving $s_j$'s outwards the
horocyclic polygon, one can multiply $\cosh(\delta_i)$ by an arbitrarily large
constant.  However, if we move $s_j$'s inwards the horocyclic polygon, $s_i$
and $s_{i-1}$ would eventually merge.

\begin{proposition} \label{prop:2p-4}
	$\cG_{B,\ell}$, $0 \le \ell \le 2\pi$, are contractible, and homeomorphic to
	$\RR^{|B|-1}$ if $\ell < 2\pi$.
\end{proposition}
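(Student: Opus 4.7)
The plan is to handle the boundary cases and the main case separately. The case $\ell = 2\pi$ is immediate since $\cG_{B,2\pi}$ was shown to consist of a single element, hence is trivially contractible. The case $\ell = 0$ was already treated via the shearing coordinate argument, yielding $\cG_{B,0} \cong \RR^{|B|-1}$, from which contractibility follows. Thus the substantive task is to establish $\cG_{B,\ell} \cong \RR^{|B|-1}$ for $0 < \ell < 2\pi$.

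For this main case I would proceed by induction on $p = |B|$. The base case $p = 2$ is essentially explicit: a horocyclic $2$-gon is symmetric about the geodesic $b_1 b_2$, so its two vertex angles both equal $\ell/2$, and via the formula $\cosh\delta_i \sin(\alpha_i/2) = 1$ this forces $\delta_1 = \delta_2$ to a common value depending only on $\ell$. The remaining freedom is a single parameter (the position of one horocycle, e.g.\ the height $y_1$ in the half-plane model with $b_1 = 0$, $b_2 = \infty$), yielding $\cG_{B,\ell} \cong \RR$.

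For the inductive step my strategy is to mimic the proof of $\cG_B \cong \RR^{|B|}$ while tracking the cone angle. Using the homeomorphism $\eta$, identify $\cG_B$ with horocyclic polygons and consider the map $\pi : \cG_B \to (0,2\pi)$ sending a polygon to its cone angle $\ell = \sum_i \alpha_i$. The scaling deformation of Lemma \ref{lem:deform} produces, through each polygon $P$, a one-parameter family $\{P^{(k)}\}$ along which the cone angle is $\ell(P^{(k)}) = \sum_i 2\arcsin(k\,\sin(\alpha_i/2))$; differentiating in $k$ gives a strictly positive derivative, so $\ell$ varies strictly monotonically along the family and in particular sweeps an open neighborhood of $\ell(P)$. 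This furnishes a local transversal to each fiber $\pi^{-1}(\ell)$, making $\pi$ locally trivial. Combined with the inductive decomposition $\cG_B \cong \cG_{B\setminus\{b_p\}} \times \RR$ from the previous proposition, I would show that the restriction of the truncation map to $\cG_{B,\ell}$ projects it onto an open subset of $\cG_{B\setminus\{b_p\}}$ with one-dimensional fibers (the choice of the added horocycle $h_p$, constrained to keep the total cone angle equal to $\ell$), and then invoke the inductive hypothesis together with the contractibility of $(0,2\pi)$ to conclude that $\cG_B \cong \cG_{B,\ell}\times(0,2\pi)$ and hence $\cG_{B,\ell}\cong\RR^{p-1}$.

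The main difficulty I anticipate is rigorously patching the local trivializations coming from Lemma \ref{lem:deform} into a global one: the range of $k$ starting from a fixed $P$ does not cover all of $(0,2\pi)$, since the upper limit $k_{\max}$ is dictated by the first vertex angle to reach $\pi$. Addressing this will require either combining the scaling flow with motions inside a fiber so as to reach every target cone angle, or, alternatively, verifying directly that the inductive projection $\cG_{B,\ell}\to \cG_{B\setminus\{b_p\}}$ has connected, $\RR$-homeomorphic fibers, which involves careful tracking of how the addition of the horocycle $h_p$ alters $\ell$ via the horocyclic geometry of Lemma \ref{lem:deform}.
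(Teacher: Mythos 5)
Your endpoint cases and your computation are fine: $\cG_{B,2\pi}$ is a point, $\cG_{B,0}\cong\RR^{|B|-1}$ by shearing coordinates, and the scaling deformation of Lemma~\ref{lem:deform} does change the cone angle monotonically, with $\ell'=\sum_i 2\arcsin\bigl(k\sin(\alpha_i/2)\bigr)$. But the substantive case $0<\ell<2\pi$ is not established, and two steps of your plan fail as stated. First, the existence of a flow transverse to the fibers of the cone-angle map $\pi$ does not make $\pi$ locally trivial: for that the flow would have to carry each fiber \emph{onto} nearby fibers, and in the direction of increasing $\ell$ the flow is incomplete -- adjacent vertices $s_{i-1},s_i$ of the horocyclic polygon merge, i.e.\ the path leaves $\cG_B$ through $\partial\cG_{B,\ell}$ with a smaller break point set (this, rather than a vertex angle reaching $\pi$, is the relevant degeneration). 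This incompleteness is precisely what you flag as the ``main difficulty,'' and you leave it unresolved; it is the heart of the matter. The paper circumvents it by using the flow only in the \emph{decreasing} direction, where it is unobstructed down to $\ell=0$: for $\ell>\ell'$ the map $f_{\ell,\ell'}$ embeds $\cG_{B,\ell}$ homeomorphically into $\cG_{B,\ell'}$, in particular into $\cG_{B,0}\cong\RR^{|B|-1}$, and the flow is extended into the closure (through the strata $\cG_{B'}$, $B'\subset B$) only to produce homotopy equivalences among the closed slices $\overline\cG_{B,\ell}$, from which contractibility follows from the two known endpoints. Second, even granting a product decomposition, the inference ``$\cG_B\cong\cG_{B,\ell}\times(0,2\pi)$, hence $\cG_{B,\ell}\cong\RR^{p-1}$'' is invalid: $X\times\RR\cong\RR^{p}$ does not imply $X\cong\RR^{p-1}$ for topological manifolds (Whitehead-type examples), so knowing the total space and the base buys you nothing about the homeomorphism type of the slice.

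Your fallback route -- induction on $|B|$ via the truncation map $\cG_{B,\ell}\to\cG_{B\setminus\{b_p\}}$ -- is likewise only sketched: you would have to show that the image is a cell of the right dimension, that the fibers (choices of $h_p$ keeping the total cone angle equal to $\ell$) are nonempty intervals over every point of the image, and that this family trivializes; none of this is verified, and the interaction between truncation and the cone angle is exactly the bookkeeping you defer. So the proposal correctly identifies the key tool (the deformation of Lemma~\ref{lem:deform}) and the endpoint spaces, but has a genuine gap at the statement to be proved; note that the paper's own argument needs no induction on $|B|$ for this proposition.
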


\begin{proof}
  Through any given point $\gamma \in \cG_B$, the deformation described above
  defines a continuous path with monotonically changing cone angle $\ell$.  We
  use $\ell$ as the parameter and denote this path by $c_\gamma(\ell)$.  See
  Figure~\ref{fig:gpl} for an illustration.

  To decrease $\ell$, one needs to move the vertices outwards the horocyclic
  polygon.  We have seen that, in this direction, one can travel along
  $c_\gamma(\ell)$ until hitting $\cG_{B,0}$.

  To increase $\ell$, one needs to move the vertices inwards the horocyclic
  polygon.  In this direction, however, the path $c_\gamma(\ell)$ would in
  general hit some $\gamma' \in \partial\cG_{B,\ell}$ for some $\ell < 2\pi$,
  as shown in Figure~\ref{fig:gpl}.  Further movement of the vertices in the
  same direction would destroy the $p$-gon.  However, we can continue deforming
  $\gamma'$ as a gluing map in $\cG_{B'}$ for some $B'\subset B$.  Hence
  $c_\gamma(\ell)$ is extended into the closure of $\cG_B$, along which one can
  increase $\ell$ up to $2\pi$; see Figure~\ref{fig:gpl}.

	This path is uniquely defined through every $\gamma \in \overline\cG_B$, and
	two path do not intersect inside $\cG_B$; intersection is only possible on
	the boundary.  For $0 \le \ell \ne \ell' \le 2\pi$, let $f_{\ell,\ell'}
	\colon \cG_{B,\ell} \to \cG_{B,\ell'}$ be the continuous map that sends
	$\gamma \in \cG_{B,\ell}$ to $\gamma' = c_\gamma(\ell') \in \cG_{B,\ell'}$.
	Then $f_{\ell, \ell'}$ and $f_{\ell',\ell}$ define a homotopy equivalence
	between $\overline\cG_{B,\ell}$ and $\overline\cG_{B,\ell'}$.  Consequently,
	$\overline\cG_{B,\ell}$ are all of the same homotopy type.  We have seen that
	$\cG_{B,0}$ and $\cG_{B,2\pi}$ are contractible, and so must be
	$\cG_{B,\ell}$ for $0 < \ell < 2\pi$.

	In general, $f_{\ell,\ell'}$ and $f_{\ell',\ell}$ are not inverse to each
	other.  However, if the path emerging from $\gamma \in \cG_{B,\ell}$ arrives
	at $\gamma' = f_{\ell,\ell'}(\gamma) \in \cG_{B,\ell'}$ without hitting the
	boundary of $\cG_B$, then one can travel backwards along the same path.  The
	reversed path defines $f_{\ell',\ell}$, hence we have $f_{\ell',\ell} \circ
	f_{\ell,\ell'}(\gamma) = f_{\ell',\ell}(\gamma') = \gamma$.

	This is the case, in particular, when $\ell > \ell'$ and $\gamma \in
	\cG_{B,\ell}$.  Then $f_{\ell,\ell'}$ defines a homeomorphism between
	$\cG_{B,\ell}$ and its image $f_{\ell,\ell'}(\cG_{B,\ell}) \subset
	\cG_{B,\ell'}$.  We finally conclude that $\cG_{B,\ell}$, $0<\ell<2\pi$, by
	its continuity in $\ell$, are all homeomorphic to $\cG_{B,0}$, thus to
	$\RR^{|B|-1}$.
\end{proof}

It is now clear that $\cG_B$ is foliated by $\cG_{B,\ell}$, $0 < \ell < 2\pi$,
as illustrated in Figure~\ref{fig:gpl}.

\begin{figure}[hbt]
  \centering 
  \includegraphics[width=.3\textwidth]{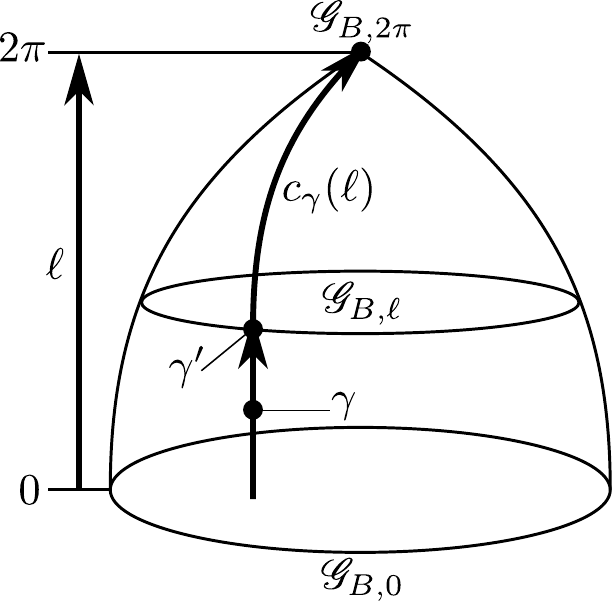}
 	\caption{
 		The structure of $\cG_B$ showing a path $c_\gamma$.  \label{fig:gpl}
	}
\end{figure}

An admissible HS structure is obtained by gluing copies of $\HHP^2$ to the
ideal boundary components of $\dS^2_\ell$.  We now conclude on the topology of
$\cM_{p,q}$, and prove Theorem~\ref{thm:metric}.

If $p < 2 < q$, an element of $\cM_{1,q}$ can be constructed by first choosing
a set of $q$ points $B \subset \partial\HHP^2$ up to isometries, and then a
gluing map $\gamma \in \cG_{B,0}$.  For the first step, we may fix three points
up to hyperbolic isometry, hence the space of $B$ is homeomorphic $\RR^{q-3}$.
In the second step, we have seen that $\cG_{B,0}$ is homeomorphic to
$\RR^{q-1}$.  Hence $\cM_{1,q}$ is homeomorphic to $\RR^{2q-4} =
\RR^{2(p+q-3)}$.  Theorem~\ref{thm:metric} follows since both $\cG_{p,0}$ and
$\cP_{p,0}$ are contractible.

If $2 \le p \le q$, we first count the dimension.  For the gluing map on one
boundary of $\dS_\ell$, we need to choose a set $B$ of $p$ break points, then
pick a gluing map from $\cG_B$.  Up to isometries, the space of this gluing map
is of dimension $2p-3$.  Similarly, the gluing map on the other boundary
contributes $2q-3$ dimensions.  The two gluing maps have the same cone angle,
removing one degree of freedom.  But we can also rotate the break points on
$\partial\HHP^2$, corresponding to translations in $\RRP^1$.  This contributes
another dimension, hence the dimension of $\cM_{p,q}$ is $2(p+q-3)$.

The rotation of $\partial\HHP^2$ generates the non-trivial fundamental group of
$\cM_{p,q}$.  To prove that the map $\Delta$ is a homeomorphism, we lift it to
a map $\tilde{\Delta}$ between the universal covers $\tilde\cP_{p,q}$ and
$\tilde\cM_{p,q}$.  A point in $\tilde\cP_{p,q}$ corresponds to a $(p,q)$-ideal
polyhedron equipped with a path (defined up to homotopy) connecting vertex
$1^+$ and $1^-$.  A point in $\tilde\cM_{p,q}$ corresponds to a
$(p,q)$-admissible HS structure with a path (up to homotopy) connecting $1^+$
and $1^-$ in $\dS^2_\ell$.  Hence $\tilde\Delta$ is a homeomorphism between
marked $(p,q)$-ideal polyhedra and marked $(p,q)$-admissible HS structures.
This proves that the covering number of $\Delta$ is $1$.

\section{Combinatorics}\label{sec:combinatorics}
It remains to prove Theorem \ref{thm:combinatorics} from
Theorem~\ref{thm:weights}.  In other terms, assume that a graph $\Gamma=(V,E)$
satisfies Condition~\ref{con:2cycles} and the edges are colored as specified in
Theorem~\ref{thm:weights}.  We need to prove that Condition~\ref{con:alternate}
implies the existence of a weight function $w:E \to \RR$ satisfying
Conditions~\ref{con:sign} and~\ref{con:wsum} and, conversely, existence of such
a weight function implies Condition~\ref{con:alternate}.

We consider two cases.

% First note that the second condition in Theorem \ref{thm:combinatorics} is
% clearly necessary. Indeed, suppose that a vertex $v^-\in V^-$ were connected to
% no vertex of $V^+$. This would mean that all vertices of $V$ adjacent to $v^-$
% are in $V^-$. Given a function $\theta:E\to (-\pi,\pi)$ as in Theorem
% \ref{thm:weights}, the sum of the values of $\theta$ on the edges adjacent to
% $v^-$ would therefore be a sum of positive numbers, therefore positive, a
% contradiction.

\paragraph*{Case $p<2<q$}

In this case, the cycle cover contains a $1$-cycle, say with vertex set
$V^+=\{v_0\}$.  The other vertices $V^-$ induce a 2-connected outerplanar
subgraph.  We color the edges adjacent to $v_0$ by blue, and other edges by
red.

Theorem~\ref{thm:combinatorics} requires a cycle visiting all the edges along
which the edge color has the pattern~\dots-blue-blue-red-\dots.  This is
actually equivalent to a much simpler condition:

\begin{lemma}
	In the case of $p<2<q$, Condition~\ref{con:alternate} is equivalent to
	\renewcommand{\theenumi}{\rm\bf(C'\arabic{enumi})}
	\renewcommand{\labelenumi}{\theenumi}
  \begin{enumerate}
  		\setcounter{enumi}{1}
  	\item \label{con:dominate} $v_0$ is connected to every vertex in $V^-$.
  \end{enumerate}
\end{lemma}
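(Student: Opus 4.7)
The plan is to prove the two implications separately, exploiting the rigid structure of the periodic pattern \dots bbrbbr\dots. In this case $p<2<q$ means $V^+=\{v_0\}$, every blue edge is incident to $v_0$, and the red edges are precisely the edges of the $2$-connected outerplanar graph induced on $V^-$. Write the vertices of $V^-$ as $w_1,\dots,w_q$ (not necessarily in the order along the outer Hamilton cycle).

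For \textbf{(C'2)} $\Rightarrow$ \textbf{(C2)}, I would argue by explicit construction. Assuming $v_0$ is adjacent to every $w_i$, enumerate the red edges $e_1,\dots,e_k$ of the outerplanar graph in any order, write $e_j=w_{i_j}w_{i_j'}$, and form the length-three closed sub-walk $v_0\to w_{i_j}\to w_{i_j'}\to v_0$, whose colour sequence is brb. Concatenating these $k$ sub-walks yields a closed walk whose colour sequence reads $(\text{brb})(\text{brb})\cdots(\text{brb})$; cyclically this is identical to bbrbbr$\cdots$. Every red edge is visited by construction, and every blue edge $v_0w$ is visited because $w$ is incident to at least one red edge (the outer cycle on $q\ge 3$ vertices passes through $w$), so $w$ appears as an endpoint of some detour.

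For the converse \textbf{(C2)} $\Rightarrow$ \textbf{(C'2)}, I would use the key structural observation that in the periodic pattern bbr no two consecutive edges of the walk are both red: each red edge is flanked on both sides by blue edges. Since the hypothesised walk traverses every edge of $\Gamma$, and $\Gamma$ is $3$-connected, every $w\in V^-$ is visited at least once. Each such visit enters and leaves $w$ via two consecutive edges of the walk, and by the pattern constraint at least one of those two edges is blue; but any blue edge incident to $w$ is of the form $v_0w$, establishing $v_0w\in E$.

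I do not expect a serious obstacle, as both directions are short. The one point that needs explicit care is in the forward direction: one must notice that the concatenation of brb-blocks does produce the cyclic pattern bbrbbr$\cdots$ (this is just a cyclic re-reading of the same length-$3k$ periodic word), and one must justify that every vertex of $V^-$ is incident to a red edge, which follows from $2$-connectedness of the outerplanar graph together with $q=|V^-|\ge 3$ (since $p+q\ge 4$ and $p=1$).
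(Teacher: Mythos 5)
Your proposal is correct and follows essentially the same route as the paper: the forward direction (which the paper dismisses as ``immediate'') is exactly the concatenation of blue--red--blue detours through $v_0$, one per red edge, and your converse is the paper's observation that the pattern forbids two consecutive red edges, so a vertex of $V^-$ not joined to $v_0$ has only red edges and these could not be visited by the required cycle. Your extra care (noting every $w\in V^-$ meets a red edge via the $q$-cycle, and that cyclically $(\mathrm{brb})^k$ reads $\mathrm{bbrbbr}\dots$) just fills in details the paper leaves implicit.
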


\begin{proof}\leavevmode
	\begin{description}
		\item[\ref{con:dominate} $\implies$ \ref{con:alternate}] Immediate.

		\item[\ref{con:alternate} $\implies$ \ref{con:dominate}] If some $v \in
			V^-$ is not connected to $v_0$, then the edges adjacent to $v$ can not
			belong to a cycle as required in Condition~\ref{con:alternate}.
	\end{description}
\end{proof}

\begin{proof}[Proof of Theorem~\ref{thm:combinatorics} when $p<2<q$]\leavevmode
	\begin{description}
		\item[\ref{con:alternate} $\implies$ \ref{con:sign} $\wedge$ \ref{con:wsum}]
			Suppose a cycle $c$ specified by Condition~\ref{con:alternate}.  Let $n$
			be its length; $n$ is necessarily a multiple of $3$.  Let $n_e$ be the
			number of times that $c$ visits $e$.  Assign to $e$ the weight $n_e$ if
			$e$ is red, or the weight $-n_e$ if $e$ is blue.  After normalization by
			a factor $3\pi/n$, we obtain a graph that satisfies
			Conditions~\ref{con:sign} and~\ref{con:wsum}.

		\item[\ref{con:sign} $\wedge$ \ref{con:wsum} $\implies$ \ref{con:dominate}]
			Assume a graph function satisfying Condition~\ref{con:sign}.  If some
			vertex $v \in V^-$ is not connected to $v_0$, then the edges adjacent to
			$v$ are all of positive weight.  This violates Condition~\ref{con:wsum}.
	\end{description}
\end{proof}

\paragraph*{Case $2 \le p \le q$}

Condition~\ref{con:alternate} requires a cycle with alternating colors; we call
such a cycle an \emph{alternating cycle}.  As in the previous case, the
existence of an alternating cycle that visits every edge implies that every
vertex is adjacent to a blue edge, but the converse is not true.  However, we
have the following equivalence, which does not depend on
Condition~\ref{con:2cycles}.

\begin{proposition} \label{pr:combi}
	If the edges of a graph are colored in blue and red, then
	Condition~\ref{con:alternate} is equivalent to
	\renewcommand{\theenumi}{\rm\bf(C''\arabic{enumi})}
	\renewcommand{\labelenumi}{\theenumi}
  \begin{enumerate}
  		\setcounter{enumi}{1}
  	\item \label{con:cyclebase} Every edge belongs to an alternating cycle (which does not
  		necessarily visits every edge).
  \end{enumerate}
\end{proposition}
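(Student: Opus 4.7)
The implication \ref{con:alternate}~$\Rightarrow$~\ref{con:cyclebase} is immediate: a single alternating closed walk visiting every edge is itself an alternating cycle through any prescribed edge, so no work is required in this direction.

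For the converse, my plan is to combine the per-edge alternating walks supplied by~\ref{con:cyclebase} into one balanced multigraph and then extract an alternating Eulerian circuit. For each edge $e$, I would fix an alternating closed walk $W_e$ through $e$ and let $m^{(e)}_f\in\ZZ_{\ge 0}$ be the number of times $W_e$ traverses $f$. The key observation is that at every visit of $W_e$ to a vertex $v$, alternation forces the incoming and outgoing edges to have opposite colors, so
\[
\sum_{f\text{ blue at }v} m^{(e)}_f \;=\; \sum_{f\text{ red at }v} m^{(e)}_f
\]
for every vertex $v$. Summing over $e$, the edge-multiplicity $m = \sum_e m^{(e)}$ inherits this vertex-wise balance, and $m_f \ge m^{(f)}_f \ge 1$ for every edge $f$.

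Let $M$ be the multigraph on $V$ having each $f\in E$ with multiplicity $m_f$. The balance above says that $M$ has equal blue- and red-multidegrees at every vertex, and $M$ is connected because the underlying graph, being the $1$-skeleton of a polyhedron, is $3$-connected. Producing an alternating Eulerian circuit of $M$ would then finish the proof, since such a circuit traverses every edge of $G$ at least once with alternating colors and hence realises~\ref{con:alternate}. For this last step I would use the classical transition-system method: fix at each vertex of $M$ an arbitrary bijection between its blue-edge incidences and its red-edge incidences (possible precisely because the counts agree); following these pairings decomposes the edge set of $M$ into edge-disjoint alternating closed trails.

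The main technical hurdle is merging these trails into a single circuit without breaking alternation. If more than one trail remains, connectedness of $M$ forces two of them to share a vertex $v$; swapping one (blue, red) transition pair between the two trails at $v$ merges them into a single alternating trail, because the swap of two (blue, red) pairs again produces two (blue, red) pairs. Iterating reduces the number of trails by one at each step, and the process terminates at a single alternating Eulerian circuit, which is the desired~\ref{con:alternate}-walk. Alternation is preserved by construction throughout; the delicate point is only that connectedness of $M$ keeps producing shared-vertex pairs of trails until just one remains, which is where $3$-connectivity (and in particular connectivity) of $\Gamma$ enters.
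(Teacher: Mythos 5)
Your proposal is correct and is in essence the paper's own argument: the paper disposes of the proposition with ``composition and decomposition of cycles,'' and your balanced multigraph, transition pairings, and trail-merging via swaps at shared vertices are exactly a careful implementation of that composition step (with connectivity of $\Gamma$ supplying the shared vertices, just as the paper implicitly uses). The only difference is that you make explicit the Eulerian-style bookkeeping that the paper declares immediate, which is a fair elaboration rather than a new route.
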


The proof is immediate from the composition and decomposition of cycles.

\begin{proof}[Proof of Theorem~\ref{thm:combinatorics} when $2 \le p \le q$]\leavevmode
	\begin{description}
		\item[\ref{con:alternate} $\implies$ \ref{con:sign} $\wedge$
			\ref{con:wsum}] Suppose a cycle $c$ specified by
			Condition~\ref{con:alternate}.  Let $n_e$ be the number of times that $c$
			visits $e$.  Assign to $e$ the weight $n_e$ if $e$ is red, or the weight
			$-n_e$ if $e$ is blue.  We obtain a graph that satisfies
			Conditions~\ref{con:sign} and~\ref{con:wsum}.

		\item[\ref{con:sign} $\wedge$ \ref{con:wsum} $\implies$
			\ref{con:cyclebase}] Let $w$ be a graph satisfying
			Conditions~\ref{con:sign} and~\ref{con:wsum}.
			
			If $w$ has an alternating cycle $c$, the number of visits defines a graph
			supported on the edges of $c$, which we denote by $w_c$.  We can choose a
			positive number $\alpha$ such that $w' = w - \alpha w_c$ is supported on
			a proper subgraph of $w$, but still satisfy Condition~\ref{con:sign} on
			other edges.  Most importantly, $w'$ satisfy Condition~\ref{con:wsum}
			because both $w$ and $w_c$ do.  The cycle $c$ no longer exist in $w'$.
			We repeat this operation if there are other alternating cycles.  Since
			the graph is finite, we will obtain a graph $\tilde w$ with no
			alternating cycle in finitely many steps.

			If an edge $e_0$ of $w$ does not belong to any alternating cycle, it must
			also be an edge of $\tilde w$.  But we prove in the following that this
			is not possible.
			
			Assume that $e_0$ is red and let $v_0^+$ and $v_0^-$ be its vertices.
			Condition~\ref{con:wsum} guarantees that $v_0^+$ is adjacent to a blue
			edge, $e_1$, whose other vertex is denoted by $v_1^+$.  The same argument
			continues and we obtain an alternating path $e_0, e_1, e_2, \dots$.
			
			Since the graph is finite, this path will eventually intersect itself.
			That is, $v_i^+ = v_{i'}^+$ for some $0 \le i'<i$ (note that we don't
			consider $v_0^-$ as visited by $e_0$).  If $e_i$ and $e_{i'}$ are of the
			same color, $e_{i'+1}, \dots, e_i$ form an alternating cycle,
			contradicting our assumption.  Hence $e_i$ and $e_{i'}$ must have
			different colors.

			The same argument applies on the other vertex $v_0^-$ of $e_0$.  We
			obtain an alternating path $e_{0}, e_{-1}, e_{-2}, \dots$.  Let $v_j^-$
			denote the common vertex of $e_{-j}$ and $e_{-j-1}$.  This path
			eventually intersect itself, i.e. $v_j^- = v_{j'}^-$ for some $0 \le j' <
			j$ (this time we don't consider $v_0^+$ as visited by $e_0$).  Again,
			$e_{-j}$ and $e_{-j'}$ must have different colors.

			But then, $e_0, \dots, e_i, e_{i'}, \dots, e_0, \dots, e_{-j}, e_{-j'},
			\dots, e_0$ form an alternating cycle; see Figure~\ref{fig:ProofComb}.
			This contradicts our assumption.
	\end{description}
\end{proof}

\begin{figure}[hbt] 
  \centering 
  \includegraphics[width=.7\textwidth]{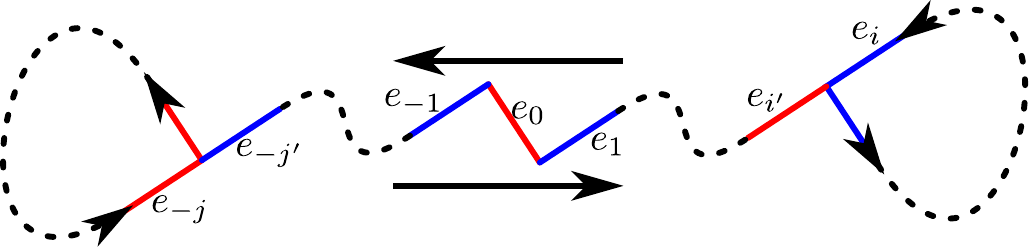}
 	\caption{
 		The alternating cycle in the last step in the proof of
 		Theorem~\ref{thm:combinatorics}.  \label{fig:ProofComb}
	}
\end{figure}

\begin{remark}
	The feasibility region specified in Theorem~\ref{thm:weights} is a polyhedral
	cone.  The proof above shows that the extreme rays of this cone correspond to
	the minimal alternating cycles in the graph.
\end{remark}

\begin{figure}[hbt] 
  \centering 
  \includegraphics[height=4cm]{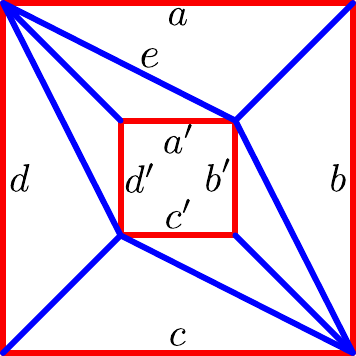}
 	\caption{
  	This graph is not the $1$-skeleton of any weakly ideal polyhedron.
  	\label{fig:Example}
	}
\end{figure}

\begin{example}
	The example in Figure \ref{fig:Example} shows that
	Condition~\ref{con:alternate} is essential.  This graph is not the
	$1$-skeleton of any weakly ideal polyhedron with the inner square in
	$\HHP^3_+$ and the outer square in $\HHP^3_-$.  A fairly elementary argument,
	left to the reader, shows that there is no alternating cycle containing edge
	$e$.  This can also be shown using Theorem~\ref{thm:weights}, since if a
	graph $w$ satisfies Conditions~\ref{con:sign} and~\ref{con:wsum}, we would
	have
	\begin{align*}
		w(a)+w(b) & < w(a')+w(b')~,\\
		w(c)+w(d) & < w(c')+w(d')~,\\
		w(b)+w(c) & > w(b')+w(c')~,\\
		w(d)+w(a) & > w(d')+w(a')~,
	\end{align*}
	from which a contradiction immediately follows.
\end{example}

\begin{remark}
	Given a graph $G$ with edges colored in blue and red, we define a directed graph
	$\tilde G$ as follows:
	\begin{itemize}
		\item Each vertex $v$ of $G$ lifts to two vertices $v_+$ and $v_-$ in
			$\tilde G$.

		\item Each red edge $uv$ of $G$ lifts to two oriented edges $u_-v_+$ and
			$v_-u_+$ in $\tilde G$.

		\item Each blue edge $uv$ of $G$ lifts to two oriented edges $u_+v_-$ and
			$v_+u_-$ in $\tilde G$.
	\end{itemize}
	It is quite clear from the definition that an alternating cycle in $G$ lifts
	to two oriented cycle in $\tilde G$, and any oriented cycle in $\tilde G$
	projects to an alternating cycle in $G$.  Hence finding an alternating cycle
	in $G$ is equivalent to finding an oriented cycle in $\tilde G$.  The latter
	can be solved by a simple depth- or breath-first search.
\end{remark}

\bibliographystyle{alpha}
\bibliography{References}

\end{document}